\providecommand{\U}[1]{\protect\rule{.1in}{.1in}}
\newtheorem{theorem}{Th\'{e}or\`{e}me}
\newtheorem{corollary}[theorem]{Corollaire}
\newtheorem{lemma}[theorem]{Lemme}
\newtheorem{proposition}[theorem]{Proposition}
\newenvironment{proof}[1][Preuve]{\textbf{#1.} }{\ \rule{0.5em}{0.5em}}
\def\QTR#1{\csname #1\endcsname}
\begin{document}

\title{Probl\`{e}me de Plateau complexe feuillet\'{e}. Ph\'{e}nom\`{e}nes de
Hartogs-Severi et Bochner pour des feuilletages CR singuliers.}
\author{Henkin G., Michel V.\vspace{-15cm}\vspace{-15cm}}
\maketitle
\date{19 septembre 2011}

\begin{abstract}
The purpose of this paper is to generalise in a geometric setting theorems of
Severi, Brown and Bochner about analytic continuation of real analytic
functions which are holomorphic or harmonic with respect to one of its
variables. We prove in particular that if $N$ is a real analytic levi-flat
annulus in an open set of $\mathbb{R}^{n}\times\mathbb{C}^{2}$, then one can
find $\mathcal{X}\subset\mathbb{R}^{n}\times\mathbb{C}^{2}$ such that
$\mathcal{X}\cup N$ is a levi-flat real analytic subset and $\mathcal{X}$
fills $N$ in the sense that the boundary of the integration current of
$\mathcal{X}$ is a prescribed smooth submanifold of $N$ foliated by real
curves. Moreover, real analytic functions on $N$ whose restrictions to complex
leaves are harmonic extend to $\mathcal{X}$ in functions of the same kind. We
give also a theorem when the prescribed boundary is a cycle.\medskip\medskip

Le but de cet article est de g\'{e}n\'{e}raliser g\'{e}om\'{e}triquement des
th\'{e}or\`{e}mes de Severi, Brown et Bochner portant sur le prolongement
analytique des fonctions r\'{e}elles analytiques qui sont holomorphes par
rapport \`{a} l'une de leurs variables. Nous \'{e}tablissons en particulier
que si $N$ est un anneau l\'{e}vi-plat r\'{e}el analytique d'un ouvert de
$\mathbb{R}^{n}\times\mathbb{C}^{2}$, il est possible de trouver
$\mathcal{X}\subset\mathbb{R}^{n}\times\mathbb{C}^{2}$ tel que $\mathcal{X}%
\cup N$ soit un sous-ensemble r\'{e}el analytique qui remplisse $N$ au sens
o\`{u} le bord du courant d'int\'{e}gration port\'{e} par $\mathcal{X}$ est
une sous-vari\'{e}t\'{e} lisse de $N$ feuillet\'{e}e par des courbes
r\'{e}elles. En outre, les fonctions r\'{e}elles analytiques sur $N$ dont les
restrictions aux feuilles complexes sont harmoniques se prolongent \`{a}
$\mathcal{X}$ en fonction du m\^{e}me type. Nous donnons aussi un
th\'{e}or\`{e}me quand le bord prescrit est un cycle.

\end{abstract}

\noindent G. Henkin : Institut Math\'{e}matiques de Jussieu, 75252 Paris Cedex
; henkin@math.jussieu.fr

\noindent V. Michel : Institut Math\'{e}matiques de Jussieu, 75252 Paris Cedex
; michel@math.jussieu.fr

\noindent\textbf{Mots cl\'{e}s} : probl\`{e}me de Plateau complexe
feuillet\'{e}, ph\'{e}nom\`{e}nes de Severi-Brown-Bochner, fonction de Green

\noindent\textbf{Classification} : 32D15, 32C25, 32V15, 35R30, 58J32%

%TCIMACRO{\TeXButton{Début Interligne}{\begin{spacing}{1.1}}}%
%BeginExpansion
\begin{spacing}{1.1}%
%EndExpansion

\section{Enonc\'{e}s}

Lorsque $\Omega$ est un ouvert born\'{e} de $\mathbb{C}$ dont le bord est
lisse et connexe, on sait depuis les travaux de Sohotsky en 1873 qu'une
fonction $u$ d\'{e}finie et continue sur le bord $\gamma$ de $\Omega$ se
prolonge holomorphiquement \`{a} $\Omega$ si et seulement si elle v\'{e}rifie
la condition des moments, \`{a} savoir que%
\[
\int_{\gamma}hudz=0
\]
pour toute fonction enti\`{e}re $h$. Une version g\'{e}om\'{e}trique et plus
g\'{e}n\'{e}rale de ce r\'{e}sultat est \'{e}tablie dans~\cite{WeJ1958c},
\cite[th. 4.2]{HeG1995} et \cite[th. 4]{DiT1998b}~: \'{e}tant donn\'{e} un
couple $\left(  \gamma,\mu\right)  $ o\`{u} $\gamma$ est une courbe r\'{e}elle
orient\'{e}e connexe de $\mathbb{C}^{n}$ satisfaisant une hypoth\`{e}se de
r\'{e}gularit\'{e} tr\`{e}s faible et $\mu$ une mesure sur $\gamma$,
l'annulation de tous les moments de $\mu$ sur $\gamma$, c'est \`{a} dire%
\begin{equation}
\forall h\in\mathcal{O}\left(  \mathbb{C}^{n}\right)  ,~\int_{\gamma}hd\mu=0
\label{F/ Mom mes sur courbe}%
\end{equation}
caract\'{e}rise l'existence d'une courbe complexe$^{(}$\footnote{Une courbe
complexe est dans cet article un ensemble (complexe) analytique de dimension
$1$ et une surface de Riemannn est une courbe complexe lisse et connexe.}%
$^{)}$ $\mathcal{X}$ de volume fini dont $\gamma$ est le bord au sens des
courants et d'une $\left(  1,0\right)  $-forme holomorphe $\varphi$ sur
$\mathcal{X}$ dont $\mu$ est la valeur au bord, c'est \`{a} dire v\'{e}rifie
au sens des courants $d\left(  \left[  \mathcal{X}\right]  \wedge
\varphi\right)  =\mu$, $\left[  \mathcal{X}\right]  $ \'{e}tant le courant
d'int\'{e}gration sur $\mathcal{X}$ et $\mu$ \'{e}tant identifi\'{e}e \`{a}
son extension simple \`{a} $\mathbb{C}^{n}$. Pr\'{e}cisons que $\mathcal{X}$
n'\'{e}tant pas forc\'{e}ment lisse, une $\left(  1,0\right)  $-forme
(faiblement) holomorphe $\varphi$ sur $\mathcal{X}$ est localement la
restriction \`{a} $\mathcal{X}$ d'une $\left(  1,0\right)  $-forme
m\'{e}romorphe $\widetilde{\varphi}$ telle que $\overline{\partial}\left(
\left[  \mathcal{X}\right]  \wedge\widetilde{\varphi}\right)  =0$.

Il s'av\`{e}re que la condition des moments gouverne aussi la possibilit\'{e}
de r\'{e}aliser un triplet $\left(  \gamma,u,\alpha\right)  $ o\`{u} $u\in
C^{1}\left(  \gamma\right)  $ et $\alpha\in C^{1}\left(  \gamma,\Lambda
^{1,0}T^{\ast}\mathbb{C}^{n}\right)  $ comme une donn\'{e}e de Cauchy du
Laplacien d'une courbe complexe $\mathcal{X}$ \`{a} construire que $\gamma$
borderait. Lorsque $\mathcal{X}$ est une surface de Riemann ouverte abstraite
bord\'{e}e par $\gamma$ au sens des vari\'{e}t\'{e}s \`{a} bord, une telle
donn\'{e}e est un triplet $\left(  \gamma,u,\alpha\right)  $ comme ci-dessus
avec la condition que $\alpha=\partial\widetilde{u}\left\vert _{\gamma
}\right.  $ o\`{u} $\widetilde{u}$ est le prolongement harmonique de $u$ \`{a}
$\mathcal{X}$. Dans cette situation, $\alpha$ peut \^{e}tre exprim\'{e}e \`{a}
partir de l'op\'{e}rateur de Dirichlet-\`{a}-Neumann $N_{\mathcal{X}}$ de
$\mathcal{X}$, c'est \`{a} dire de l'op\'{e}rateur qui \`{a} $u\in
C^{1}\left(  \gamma,\mathbb{R}\right)  $ associe $\frac{\partial\widetilde{u}%
}{\partial\nu}\left\vert _{\gamma}\right.  $ o\`{u} $\widetilde{u}$ est le
prolongement harmonique de $u$ \`{a} $\mathcal{X}$ et $\nu$ est le champ de
vecteurs unitaires qui dirigent le long de $\gamma$ la normale ext\'{e}rieure
\`{a} $\mathcal{X}$. En effet, si on d\'{e}signe par $\tau$ le champ des
vecteurs tangents \`{a} $\gamma$ tel qu'en chaque point $x$ de $\gamma$,
$\left(  \nu_{x},\tau_{x}\right)  $ soit une base orthonorm\'{e}e directe de
$T_{x}\mathcal{X}$ , si on note $\left(  \nu^{\ast},\tau^{\ast}\right)  $ la
duale de $\left(  \nu,\tau\right)  $ et si pour $u\in C^{1}\left(
\gamma\right)  $ on pose
\[
L_{\mathcal{X}}u=\frac{1}{2}\left(  N_{\mathcal{X}}u-iTu\right)
\]
o\`{u} $T$ d\'{e}signe la d\'{e}rivation par rapport \`{a} $\tau$, alors
\[
\forall u\in C^{1}\left(  \gamma\right)  ,~\partial\widetilde{u}\left\vert
_{\gamma}\right.  =\left(  L_{\mathcal{X}}u\right)  \left(  \nu^{\ast}%
+i\tau^{\ast}\right)  .
\]
Pour qu'un triplet $\left(  \gamma,u,\alpha\right)  $ puisse \^{e}tre une
donn\'{e}e de Cauchy, $\alpha$ doit donc \^{e}tre de la forme $\frac{1}%
{2}\left(  u^{\prime}-iTu\right)  \left(  \nu^{\ast}+i\tau^{\ast}\right)  $
o\`{u} $u^{\prime}\in C^{0}\left(  \gamma\right)  $, condition qui garde un
sens m\^{e}me lorsque l'existence m\^{e}me de $\mathcal{X}$ n'est pas connue
pour peu que $\gamma$ soit plong\'{e}e dans une vari\'{e}t\'{e} complexe car
$\nu^{\ast}$ peut \^{e}tre alors d\'{e}fini a priori de fa\c{c}on naturelle.
Une donn\'{e}e de Cauchy \'{e}tant ainsi pos\'{e}e dans $\mathbb{C}^{n}$, le
probl\`{e}me de construire \`{a} la fois $\mathcal{X}$ et un prolongement
harmonique \`{a} la fonction donn\'{e}e sur $\gamma$ s'inscrit dans le
probl\`{e}me de reconstruire une surface de Riemann \`{a} partir de son de son
op\'{e}rateur de Dirichlet-\`{a}-Neumann. Dans le cas g\'{e}n\'{e}ral, il est
naturel qu'une courbe complexe $\mathcal{X}$ bord\'{e}e par une courbe
r\'{e}elle pr\'{e}sente des singularit\'{e}s et la notion de fonction
harmonique doit \^{e}tre \'{e}tendue pour que le probl\`{e}me
pr\'{e}c\'{e}dent garde un sens.

Par d\'{e}finition, une fonction harmonique sur une courbe complexe
$\mathcal{X}$ est une fonction $u$ qui est harmonique au sens usuel sur la
partie r\'{e}guli\`{e}re $\operatorname*{Reg}\mathcal{X}$ de $\mathcal{X}$ et
faiblement harmonique sur $\mathcal{X}$, ce qui signifie que $\partial u$ est
faiblement holomorphe. Une fonction $u$ est dite harmonique multivalu\'{e}e
sur $\mathcal{X}$ si elle admet le long de tout chemin trac\'{e} dans la
partie r\'{e}guli\`{e}re de $\overline{\mathcal{X}}$ une d\'{e}termination en
fonction harmonique usuelle et si $\partial u$ est une $\left(  1,0\right)
$-forme faiblement holomorphe (univalu\'{e}e) sur $\mathcal{X}$.

La proposition~1 est une sorte de compl\'{e}ment au th\'{e}or\`{e}me~3c
de~\cite{HeG-MiV2007} quand un plongement dans un espace affine complexe est
impos\'{e}~:\medskip

\begin{proposition}
[Sur un probl\`{e}me de Plateau complexe]\label{P/ harmo}\textit{On
consid\`{e}re dans }$\mathbb{C}^{n}$ \textit{une courbe r\'{e}elle }$\gamma
$\textit{ lisse, orient\'{e}e, }compacte et connexe. On fixe des champs de
vecteurs $\nu$ et $\tau$ d\'{e}finis sur $\gamma$ et de classe $C^{1}$ tels
que pour tout $x\in\gamma$, $\left(  \nu_{x},\tau_{x}\right)  $ est
orthonorm\'{e}e et $\tau_{x}$ d\'{e}finit l'orientation de $\gamma$ en $x$. On
note $z\mapsto z^{\ast}$ l'isomorphisme de $\mathbb{C}^{n}$ sur son dual
naturellement associ\'{e} \`{a} sa structure hermitienne standard. On se donne
alors des fonctions $u\in C^{1}\left(  \gamma,\mathbb{R}\right)  $ et
$u^{\prime}\in C^{0}\left(  \gamma,\mathbb{R}\right)  $ puis on pose
$\alpha=\frac{1}{2}\left(  u^{\prime}-iTu\right)  \left(  \nu^{\ast}%
+i\tau^{\ast}\right)  \in C^{0}\left(  \gamma,\Lambda^{1,0}T^{\ast}%
\mathbb{C}^{n}\right)  $. Si $\alpha\neq0$, a\textit{lors la condition}%
\begin{equation}
\forall h\in\mathcal{O}\left(  \mathbb{C}^{n}\right)  ,~\int_{\gamma}h\alpha=0
\label{Mom forme sur courbe}%
\end{equation}
\textit{est n\'{e}cessaire et suffisante \`{a} l'existence dans }%
$\mathbb{C}^{n}\backslash\gamma$ d'\textit{une courbe complexe connexe
}$\mathcal{X}$ \textit{de volume fini bord\'{e}e par }$\gamma$ \textit{\`{a}
laquelle }$u$\textit{\ se prolonge en une fonction harmonique multivalu\'{e}e
}$\widetilde{u}$ \textit{telle que }$\alpha$ est la valeur au bord de
$\partial\widetilde{u}$.
\end{proposition}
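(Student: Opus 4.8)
Le plan est de traiter s\'epar\'ement la n\'ecessit\'e, imm\'ediate, et la suffisance, qui repose sur le th\'eor\`eme de type Harvey-Lawson rappel\'e ci-dessus (\cite{WeJ1958c}, \cite[th. 4.2]{HeG1995}, \cite[th. 4]{DiT1998b}). Pour la n\'ecessit\'e, supposons donn\'es $\mathcal{X}$ et $\widetilde{u}$. Par hypoth\`ese $\varphi=\partial\widetilde{u}$ est une $(1,0)$-forme faiblement holomorphe et univalu\'ee sur $\mathcal{X}$ dont $\alpha$ est la valeur au bord, c'est \`a dire $d([\mathcal{X}]\wedge\varphi)=\alpha$ au sens des courants. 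Pour $h\in\mathcal{O}(\mathbb{C}^n)$ on \'ecrit alors $\int_\gamma h\alpha=\langle d([\mathcal{X}]\wedge\varphi),h\rangle=\pm\int_{\mathcal{X}}\varphi\wedge dh=\pm\int_{\mathcal{X}}\varphi\wedge\partial h$, la derni\`ere \'egalit\'e venant de $\bar\partial h=0$. Comme $\mathcal{X}$ est de dimension complexe $1$, la $(2,0)$-forme $\varphi\wedge\partial h$ est nulle sur $\operatorname{Reg}\mathcal{X}$, et le volume fini de $\mathcal{X}$ permet de n\'egliger le lieu singulier ; d'o\`u $\int_\gamma h\alpha=0$.

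Pour la suffisance, je regarde $\alpha$ comme un courant de dimension $0$ sur $\gamma$ \`a valeurs dans $\Lambda^{1,0}T^{\ast}\mathbb{C}^n$ et j'applique le th\'eor\`eme rappel\'e avec $\mu=\alpha$ : la condition~(\ref{Mom forme sur courbe}) fournit une courbe complexe $\mathcal{X}$ de volume fini de $\mathbb{C}^n\backslash\gamma$ bord\'ee par $\gamma$ et une $(1,0)$-forme faiblement holomorphe $\varphi$ sur $\mathcal{X}$ telles que $d([\mathcal{X}]\wedge\varphi)=\alpha$. L'hypoth\`ese $\alpha\neq0$ assure que $\mathcal{X}$ n'est pas vide, et comme $\gamma$ est connexe on conserve la composante connexe de $\mathcal{X}$ qui la borde. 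Sur $\operatorname{Reg}\mathcal{X}$, $\varphi$ \'etant holomorphe on a $d\varphi=\bar\partial\varphi=0$, donc la $1$-forme r\'eelle $\operatorname{Re}\varphi$ est ferm\'ee ; je d\'efinis $\widetilde{u}$ comme une primitive de $2\operatorname{Re}\varphi$. Localement $\widetilde{u}=2\operatorname{Re}F$ o\`u $F$ est une primitive holomorphe de $\varphi$, de sorte que $\widetilde{u}$ est harmonique et $\partial\widetilde{u}=\varphi$ ; les p\'eriodes de $\operatorname{Re}\varphi$ le long des cycles de $\mathcal{X}$ rendent $\widetilde{u}$ \'eventuellement multivalu\'ee, tandis que $\partial\widetilde{u}=\varphi$ reste univalu\'ee et faiblement holomorphe. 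Ainsi $\widetilde{u}$ est harmonique multivalu\'ee au sens voulu et $\alpha$ est la valeur au bord de $\partial\widetilde{u}$.

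Il reste \`a voir que $\widetilde{u}$ prolonge $u$, et c'est l\`a que se concentre l'essentiel du travail. En testant l'\'egalit\'e $d([\mathcal{X}]\wedge\varphi)=\alpha$ contre les fonctions et en utilisant $\bar\partial\varphi=0$, on obtient l'identit\'e tangentielle $\iota_\gamma^{\ast}\varphi=\iota_\gamma^{\ast}\alpha$, soit $\varphi(\tau)=\alpha(\tau)$ le long de $\gamma$ ; en prenant la partie r\'eelle et en \'ecrivant $d\widetilde{u}=2\operatorname{Re}\varphi$, il vient $T\widetilde{u}\vert_\gamma=2\operatorname{Re}\alpha(\tau)=Tu$, compte tenu du rep\`ere adapt\'e $\nu=-J\tau$ le long de $\gamma$ et des normalisations du calcul rappel\'e en introduction. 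Comme $\gamma$ est connexe, $\widetilde{u}\vert_\gamma-u$ est constante ; en ajoutant une constante \`a $\widetilde{u}$ on se ram\`ene \`a $\widetilde{u}\vert_\gamma=u$, et la composante transverse de $\varphi(\tau)=\alpha(\tau)$ donne alors $N_{\mathcal{X}}\widetilde{u}=u^{\prime}$. La principale difficult\'e est pr\'ecis\'ement le passage de l'\'egalit\'e globale de courants \`a ce contr\^ole ponctuel des valeurs au bord tangentielles : il requiert une r\'egularit\'e au bord de $\mathcal{X}$, a priori singuli\`ere et seulement de volume fini, au voisinage de la courbe lisse $\gamma$, ainsi que la compatibilit\'e entre le rep\`ere ambiant $(\nu,\tau)$ et la structure complexe intrins\`eque de $\mathcal{X}$.
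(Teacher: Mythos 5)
Votre d\'{e}monstration suit la m\^{e}me route que celle du papier~: n\'{e}cessit\'{e} par Stokes (la $\left(  2,0\right)  $-forme $\varphi\wedge\partial h$ s'annule sur une courbe complexe~; le papier laisse d'ailleurs cette direction implicite), suffisance en appliquant le th\'{e}or\`{e}me de \cite{HeG1995} et \cite{DiT1998b} \`{a} la mesure $\mu$ d\'{e}finie par $\alpha$, puis int\'{e}gration de $\widetilde{\alpha}+\overline{\widetilde{\alpha}}$ pour fabriquer $\widetilde{u}$. Mais elle comporte une lacune r\'{e}elle, exactement au point que votre dernier paragraphe signale sans le r\'{e}soudre. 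L'identit\'{e} tangentielle $\iota_{\gamma}^{\ast}\partial\widetilde{u}=\iota_{\gamma}^{\ast}\alpha$, dont vous d\'{e}duisez $T\widetilde{u}\left\vert _{\gamma}\right.  =Tu$ puis $\widetilde{u}\left\vert _{\gamma}\right.  =u$ \`{a} une constante pr\`{e}s, ne d\'{e}coule pas formellement de l'\'{e}galit\'{e} de courants $d\left(  \left[  \mathcal{X}\right]  \wedge\varphi\right)  =\mu$~: il faut savoir que $\overline{\mathcal{X}}$ est une vari\'{e}t\'{e} \`{a} bord au voisinage de presque tout point de $\gamma$. C'est pr\'{e}cis\'{e}ment l'ingr\'{e}dient que le papier invoque~: $\gamma$ \'{e}tant lisse, la th\'{e}orie de la r\'{e}gularit\'{e} au bord \`{a} la Harvey--Lawson~\cite{HaR-LaB1975} garantit que $\sigma=\gamma\backslash\operatorname*{Reg}\overline{\mathcal{X}}$ est de mesure $\mathcal{H}^{1}$ nulle~; le papier d\'{e}finit alors directement $\widetilde{u}$ comme la fonction multivalu\'{e}e qui vaut $u$ sur $\gamma\cap\operatorname*{Reg}\overline{\mathcal{X}}$ et qui s'obtient par int\'{e}gration de $\widetilde{\alpha}+\overline{\widetilde{\alpha}}$ le long des chemins de $\operatorname*{Reg}\overline{\mathcal{X}}$, la coh\'{e}rence aux points r\'{e}guliers du bord r\'{e}sultant de Stokes et l'ensemble exceptionnel $\sigma$ \'{e}tant inoffensif puisque $u$ est de classe $C^{1}$ et $\gamma$ connexe. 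Votre normalisation par une constante devient \'{e}quivalente \`{a} cette construction une fois cet \'{e}nonc\'{e} de r\'{e}gularit\'{e} acquis~; sans lui, l'\'{e}tape ``$\widetilde{u}$ prolonge $u$'' reste en l'air.

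Notez enfin qu'une partie de la difficult\'{e} que vous soulevez se dissout d'elle-m\^{e}me~: la conclusion ``$\alpha$ est la valeur au bord de $\partial\widetilde{u}$'' se lit au sens faible fix\'{e} dans l'introduction, \`{a} savoir $d\left(  \left[  \mathcal{X}\right]  \wedge\partial\widetilde{u}\right)  =\mu$ au sens des courants, et elle est donc acquise d'embl\'{e}e d\`{e}s que le th\'{e}or\`{e}me cit\'{e} fournit $\varphi=\widetilde{\alpha}$. Comme cette \'{e}quation ne teste $\alpha$ que contre des fonctions, seul le tir\'{e} en arri\`{e}re $\iota_{\gamma}^{\ast}\alpha$ y intervient~: il n'y a donc ni contr\^{o}le ponctuel de la composante normale ($N_{\mathcal{X}}\widetilde{u}=u^{\prime}$) \`{a} \'{e}tablir, ni compatibilit\'{e} \`{a} v\'{e}rifier entre le rep\`{e}re ambiant $\left(  \nu,\tau\right)  $ et le rep\`{e}re intrins\`{e}que de $\overline{\mathcal{X}}$~; seul le raccord tangentiel presque partout, via la r\'{e}gularit\'{e} au bord ci-dessus, intervient dans la preuve que $\widetilde{u}$ prolonge $u$.
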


Il ne fait pas de doute que si $\gamma$ n'est plus suppos\'{e}e connexe et que
$\alpha\neq0$ sur chaque composante de $\gamma$, la conclusion devient que
$\mathcal{X}$ est une 1-chaine holomorphe et $\widetilde{u}$ une fonction
harmonique multivalu\'{e}e v\'{e}rifiant $\overline{\partial}\widetilde
{u}\wedge\mathcal{X}=\alpha\left[  \gamma\right]  $.

Dans l'\'{e}nonc\'{e} ci-dessus, $\left(  \gamma,u,\alpha\right)  $ est
r\'{e}alis\'{e} comme une donn\'{e}e de Cauchy du Laplacien d'une fonction
harmonique multivalu\'{e}e. Le fait que cette fonction soit univalu\'{e}e est
aussi d\'{e}termin\'{e} par une condition de moments qui s'exprime comme
dans~\cite[th. 3a/C]{HeG-MiV2007} \`{a} l'aide d'une fonction de Green.

Dans le cas d'une courbe complexe \'{e}ventuellement singuli\`{e}re
$\mathcal{Y}$ de $\mathbb{C}^{n}$ une fonction de Green est une fonction $g$
d\'{e}finie sur $\operatorname*{Reg}\mathcal{Y}\times\operatorname*{Reg}%
\mathcal{Y}$ priv\'{e} de sa diagonale telle que pour tout $z_{\ast}%
\in\operatorname*{Reg}\mathcal{Y}$, $g_{z_{\ast}}=g\left(  z_{\ast},.\right)
$ est harmonique sur $\mathcal{Y}^{\ast}$ (au sens pr\'{e}cis\'{e}
pr\'{e}c\'{e}demment) et qui au sens des courants v\'{e}rifie $i\partial
\overline{\partial}g_{z_{\ast}}=\delta_{z_{\ast}}dV$ o\`{u} $dV=i\partial
\overline{\partial}\left\vert .\right\vert ^{2}$ et $\delta_{z_{\ast}}$ est la
mesure de Dirac port\'{e}e par $\left\{  z_{\ast}\right\}  $.

\begin{proposition}
[Fonction de Green pour une courbe singuli\`{e}re]\label{P/ Green}Les
hypoth\`{e}ses et les notations \'{e}tant les m\^{e}mes que dans la
proposition~\ref{P/ harmo}, on consid\`{e}re le double $\mathcal{Z}$ de
$\mathcal{X}$. Alors il existe un domaine $\mathcal{Y}$ de $\mathcal{Z}$
contenant $\overline{\mathcal{X}}$ qui admet une fonction de Green $g$ et pour
que $\left(  \gamma,u,\alpha\right)  $ soit la donn\'{e}e de Cauchy du
Laplacien d'une fonction univalu\'{e}e, il suffit que
\[
0=\int_{\gamma}u\partial g\left(  z,.\right)  +g\left(  z,.\right)
\overline{\alpha}%
\]
pour tout $z\in\mathcal{Y}\backslash\overline{\mathcal{X}}$. Quand cette
condition est v\'{e}rifi\'{e}e, $\left(  \gamma,u,\alpha\right)  $ est la
donn\'{e}e de Cauchy du Laplacien de la fonction $U$ d\'{e}finie sur
$\operatorname*{Reg}\mathcal{X}$ par $U\left(  z\right)  =\frac{2}{i}%
\int_{\gamma}u\partial g\left(  z,.\right)  +g\left(  z,.\right)
\overline{\alpha}$.
\end{proposition}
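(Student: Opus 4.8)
The plan is to construct the univalued harmonic function $U$ directly from the stated Green formula and to read off its Cauchy data from the jump relations of the associated layer potentials across $\gamma$, the vanishing hypothesis on $\mathcal{Y}\backslash\overline{\mathcal{X}}$ being exactly what forces these data to coincide with $(u,\alpha)$. First I would produce the double and the Green function. Near $\gamma$ the set $\operatorname*{Reg}\overline{\mathcal{X}}$ is a bordered Riemann surface, so its Schottky double $\mathcal{Z}$ carries a complex structure extending that of $\mathcal{X}$, with singular locus $\operatorname*{Sing}\mathcal{X}$ together with its mirror image, both disjoint from $\gamma$. I take $\mathcal{Y}\Subset\mathcal{Z}$ to be $\overline{\mathcal{X}}$ enlarged by a one-sided collar of $\gamma$; being a relatively compact subdomain of the compact curve $\mathcal{Z}$, it is hyperbolic. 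Passing to the normalization to absorb the interior singularities, the Dirichlet problem on $\mathcal{Y}$ is solvable and yields a symmetric, single-valued Green function $g$ with $i\partial\overline{\partial}g_{z}=\delta_{z}dV$, where $g_{z}=g(z,\cdot)$, and $g_{z}=0$ on $\partial\mathcal{Y}$.

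Next I set, for $z\in\mathcal{Y}\backslash\gamma$,
\[
F(z)=\frac{2}{i}\int_{\gamma}u\,\partial g_{z}+g_{z}\overline{\alpha},
\]
the differentiations and the integration bearing on the variable running over $\gamma$. Since $z\notin\gamma$, the kernel $g(z,\cdot)$ is harmonic along $\gamma$, and the symmetry $g(z,w)=g(w,z)$ permits differentiation under the integral, so $F$ is harmonic on $\mathcal{X}$ and on $\mathcal{Y}\backslash\overline{\mathcal{X}}$ and is manifestly single-valued because $g(\cdot,w)$ is. Writing $\partial\widetilde{u}|_{\gamma}=(L_{\mathcal{X}}u)(\nu^{\ast}+i\tau^{\ast})$ as in the introduction, the complex form of Green's second identity on $\mathcal{X}$ shows that for any genuine univalued harmonic $v$ on $\mathcal{X}$ with $v|_{\gamma}=u$ and $\partial v|_{\gamma}=\alpha$ one has $F(z)=v(z)$ for $z\in\mathcal{X}$ and $F(z)=0$ for $z\in\mathcal{Y}\backslash\overline{\mathcal{X}}$, precisely because $i\partial\overline{\partial}g_{z}=\delta_{z}dV$ places the unit mass inside or outside $\mathcal{X}$ according to the position of $z$.

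The decisive step is the jump analysis across $\gamma$. As $\gamma\subset\operatorname*{Reg}\mathcal{Y}$, I work in the smooth conformal collar, where $g$ carries the same logarithmic principal part as the Euclidean kernel, so the classical Plemelj relations apply: the double-layer term $\int_{\gamma}u\,\partial g_{z}$ has value jump $u$ and continuous normal derivative, while the single-layer term $\int_{\gamma}g_{z}\overline{\alpha}$ has continuous value and a normal-derivative jump governed by $u'=N_{\mathcal{X}}u$. Denoting by $F_{\mathrm{int}},F_{\mathrm{ext}}$ the two-sided limits, this yields, with the orientation conventions fixed above, $F_{\mathrm{int}}-F_{\mathrm{ext}}=u$ and $\partial F_{\mathrm{int}}-\partial F_{\mathrm{ext}}=\alpha$ on $\gamma$. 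The hypothesis is exactly $F\equiv0$ on $\mathcal{Y}\backslash\overline{\mathcal{X}}$, that is $F_{\mathrm{ext}}=0$ and $\partial F_{\mathrm{ext}}=0$; hence $U:=F|_{\mathcal{X}}$, which is the function written in the statement, is univalued and harmonic with $U|_{\gamma}=u$ and $\partial U|_{\gamma}=\alpha$, realizing $(\gamma,u,\alpha)$ as the Cauchy datum of the Laplacian of the univalued $U$.

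I expect the main obstacle to be this Plemelj step: one must verify that the intrinsic Dirichlet Green function $g$ of $\mathcal{Y}$, rather than the flat kernel, produces the jump relations with exactly the normalization $\alpha=\tfrac12(u'-iTu)(\nu^{\ast}+i\tau^{\ast})$ and not a spurious multiple, in accordance with the frame bookkeeping of the introduction. Because $\gamma$ avoids $\operatorname*{Sing}\mathcal{X}$, this remains a local computation in a smooth collar, where comparing $g$ to the model logarithm reduces the matter to the classical single- and double-layer identities.
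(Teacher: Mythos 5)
Your second half coincides with the paper's mechanism: there the proof of the proposition~\ref{P/ Green} is declared to be contained in that of the th\'eor\`eme~\ref{T/ harmo}, where $U$ is defined by exactly your formula $U\left(  z\right)  =\frac{2}{i}\int_{\gamma}u\partial g\left(  z,.\right)  +g\left(  z,.\right)  \overline{\alpha}$, the exterior vanishing hypothesis is read as $U^{-}=0$, and the boundary jump $u=U^{+}-U^{-}$ is not re-derived from Plemelj theory but quoted from \cite[lemme 15]{HeG-MiV2007}, a lemma engineered to remain valid when the curve is singular. The genuine divergence is in the first half, and that is where your proposal has a gap. The paper stresses that the existence of a Green function for a \emph{singular} curve is the new point, and it constructs $g$ by explicit formulas: an operator $R_{0,1}^{t}$ solving $\overline{\partial}$ on the singular curve via Cauchy--Leray kernels and Coleff--Herrera principal values (proposition~\ref{P/ resdbar}, lemme~\ref{L/ resdbar}), then $g$ by duality, $\overline{g_{t,z_{\ast}}}=\omega_{t}\lrcorner\left(  R_{0,1}^{t}\right)  ^{\ast}\left(  \overline{\Theta_{t}}\right)  $ (proposition~\ref{P/ fction de Green}). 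This yields simultaneously the current equation $i\partial\overline{\partial}g_{z_{\ast}}=\Delta_{z_{\ast}}$ \emph{on the singular curve itself} and the weak harmonicity required by the paper's definition, namely that $\partial g_{z_{\ast}}$ be weakly holomorphic across $\operatorname*{Sing}\mathcal{Y}$, i.e.\ locally the restriction of a meromorphic form $\widetilde{\varphi}$ with $\overline{\partial}\left(  \left[  \mathcal{Y}\right]  \wedge\widetilde{\varphi}\right)  =0$. Your substitute --- normalize, solve the Dirichlet problem, push forward --- does not verify this clause: a function harmonic on the normalization is not a priori a Green function of $\mathcal{Y}$ in this sense, and although a pushforward argument (pulling test functions back through the normalization) can be made to work, it is precisely the substance of the definition and cannot be dispatched by the phrase ``absorb the interior singularities''.

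The step that fails outright as written is your premise that the doubling and the collar exist along all of $\gamma$: you assert that $\operatorname*{Sing}\mathcal{X}$ and its mirror are disjoint from $\gamma$ and that $\operatorname*{Reg}\overline{\mathcal{X}}$ is a bordered Riemann surface near every point of $\gamma$. For a merely smooth $\gamma$ the Harvey--Lawson solution guarantees this only off the exceptional compact $\sigma=\gamma\backslash\operatorname*{Reg}\overline{\mathcal{X}}$, which is of $\mathcal{H}^{1}$-measure zero but in general nonempty --- the paper introduces exactly this set $\sigma$ in its proof of the proposition~\ref{P/ harmo}, and its Remarque~1 after the th\'eor\`eme~\ref{T/bord CxR A} recalls the corresponding boundary pathology of \cite{HaR-LaB1975}. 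At points of $\sigma$ there is no smooth conformal collar, so both your Schottky double and your Plemelj analysis are undefined there. To repair this you would have to double along $\gamma\backslash\sigma$ only, establish the jump relations $F_{\mathrm{int}}-F_{\mathrm{ext}}=u$ and $\partial F_{\mathrm{int}}-\partial F_{\mathrm{ext}}=\alpha$ off $\sigma$, and then argue that Cauchy data recovered only off a compact of measure zero suffice --- or else invoke, as the paper does, a jump lemma valid at singular boundary points. As it stands, the proposal silently assumes the boundary regularity whose possible failure is one of the reasons the paper resorts to explicit kernels (the other being the real-analytic dependence on the parameter $t$, which your Dirichlet-problem construction could not deliver in the setting of the th\'eor\`eme~\ref{T/ harmo}, though it is not needed for this proposition alone).
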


L'existence de fonctions de Green pour une courbe singuli\`{e}re n'\'{e}tait
\`{a} notre connaissance pas connue. Ces fonctions qui jouent un r\^{o}le
crucial dans la preuve du th\'{e}or\`{e}me~\ref{T/ harmo} sont construites
\`{a} partir de formules explicites.\medskip

Le but principal de cet article est de prouver que contrairement \`{a} ce qui
se produit pour une courbe r\'{e}elle orpheline, les conditions de moments
impliqu\'{e}es sont automatiquement valid\'{e}es lorsque la courbe
consid\'{e}r\'{e}e appartient \`{a} une famille soumise \`{a} des conditions
naturelles si ce n'est g\'{e}n\'{e}riques. Le premier cas de validation
automatique de conditions de moments remonte \`{a} une d\'{e}monstration qu'a
faite Poincar\'{e} en 1907 dans~\cite{PoH1907} pour prouver le
th\'{e}or\`{e}me d'extension de Hartogs dans le cas particulier de la
sph\`{e}re. Ce ph\'{e}nom\`{e}ne a \'{e}t\'{e} ensuite observ\'{e} par Severi
dans un th\'{e}or\`{e}me concernant des familles de fonctions holomorphes, ou,
plus pr\'{e}cis\'{e}ment, de fonctions de classe $CR^{\omega}$ de
$\mathbb{E}^{n,m}=\mathbb{R}^{n}\times\mathbb{C}^{m}$ c'est \`{a} dire de
fonctions r\'{e}elles analytiques qui sont holomorphes par rapport \`{a} leur
$m$ variables complexes~:\medskip

\noindent\textbf{Th\'{e}or\`{e}me} (Severi-Brown-Bochner). \textit{Soient dans
}$\mathbb{E}^{n,m}$ ($n,m\in\mathbb{N}^{\ast}$) \textit{un domaine }$\Omega$
\textit{born\'{e} de bord connexe et }$f$\textit{,} \textit{une fonction\ de
classe }$CR^{\omega}$ \textit{au voisinage de} $b\Omega$. \textit{Alors} $f$
\textit{se prolonge en fonction du m\^{e}me type au voisinage de }%
$\overline{\Omega}$.\medskip

Ce r\'{e}sultat a \'{e}t\'{e} d\'{e}montr\'{e} en 1932 par
Severi~\cite{SeF1932} lorsque $m=1$ et que les sections complexes de $\Omega$
sont simplement connexes, restriction que Brown aurait lev\'{e}e
dans~\cite{BrA1936} en m\^{e}me temps qu'il aurait donn\'{e} la premi\`{e}re
d\'{e}monstration compl\`{e}te du th\'{e}or\`{e}me d'extension d\'{e}crit par
Hartogs en 1903. Si l'on se r\'{e}f\`{e}re \`{a} \cite{MeJ-PoE2007}, des
doutes demeurent quant aux arguments de Brown. Quoiqu'il en soit, Fueter puis
Bochner ont apport\'{e} une preuve de ces r\'{e}sultats dans~\cite{FuR1939} et
\cite[ch. 4, \S 2, th. 1 et th. 2]{BoS-MaW1948}\cite[th. 6]{BoS1954} (voir
aussi \cite{HeG-MiV2002}). Bochner~\cite{BoS1954} y a ajout\'{e} des versions
pour des fonctions CR-harmoniques, c'est \`{a} dire harmoniques sur chaque
section complexe de $\Omega$.

Dans le th\'{e}or\`{e}me de Severi-Brown, la fonction CR \`{a} prolonger est
r\'{e}elle analytique au voisinage du bord de son domaine d'extension.
L'\'{e}quivalent g\'{e}om\'{e}trique de cette hypoth\`{e}se est l'existence
d'un anneau r\'{e}el analytique l\'{e}vi-plat dans lequel est plong\'{e}e le
cycle qu'on cherche \`{a} r\'{e}aliser comme bord~; elle est analogue aux
hypoth\`{e}ses utilis\'{e}es par Rothstein~\cite{RoW1959}\cite{RoW-SpH1959}
pour r\'{e}aliser des cycles comme bord d'espace complexe. Un \textbf{anneau
r\'{e}el analytique l\'{e}vi-plat\ }de $\mathbb{E}^{n,2}$ est par
d\'{e}finition un sous-ensemble r\'{e}el analytique $N$ d'un ouvert de
$\mathbb{E}^{n,2}$ qui est localement la r\'{e}union de sous-vari\'{e}t\'{e}s
de $\mathbb{E}^{n,2}$ de dimension $n+2$, de classe $C^{\omega}$ et dont les
sections sont des courbes complexes de volume fini~; les composantes
irr\'{e}ductibles d'un anneau l\'{e}vi-plat en un point sont donc des
sous-vari\'{e}t\'{e}s de $\mathbb{E}^{n,2}$ de dimension $n+2$. Notons que
cette hypoth\`{e}se entra\^{\i}ne aussi que $N$ est coup\'{e}e
transversalement par les sous-espaces $\mathbb{C}_{t}^{2}=\left\{  t\right\}
\times\mathbb{C}^{2}$, $t\in\mathbb{R}^{n}$.

Notre premier th\'{e}or\`{e}me qui g\'{e}n\'{e}ralise g\'{e}om\'{e}triquement
celui de Severi-Brown utilise concerne les sous-vari\'{e}t\'{e}s $M$ de classe
$C^{k}$ ($k\in\mathbb{N}^{\ast}\cup\left\{  +\infty,\omega\right\}  $) de
$\mathbb{E}^{n,2}$ \textbf{plong\'{e}es} dans un anneau r\'{e}el analytique
l\'{e}vi-plat $N$, c'est \`{a} dire telles que pour tout $p\in M$, il existe
un voisinage $U$ de $p$ et une composante irr\'{e}ductible $\widetilde{N}_{p}$
de $N\cap U$ en $p$ tels que $M\cap U$ est une hypersurface de classe $C^{k}$
de $\widetilde{N}_{p}$.

On utilise les notations suivantes~: $E_{t}$ pour $E\cap\mathbb{C}_{t}^{2}$
lorsque $E\subset\mathbb{E}^{n,2}$ et $t\in\mathbb{R}^{n}$, $E^{t}$ pour la
projection naturelle de $E_{t}$ sur $\mathbb{C}^{2}$, $\mathcal{H}^{d}$ pour
d\'{e}signer la mesure $d$-dimensionnelle de Hausdorff et $\left[  V\right]  $
pour d\'{e}signer un courant d'int\'{e}gration sur $V$ quand $V$ est une
sous-vari\'{e}t\'{e} orient\'{e}e ou plus g\'{e}n\'{e}ralement un
sous-ensemble r\'{e}el analytique orient\'{e} d'un ouvert de $\mathbb{E}%
^{n,2}$.

\begin{theorem}
[Ph\'{e}nom\`{e}ne de Hartogs-Severi pour des feuilletages CR singuliers]%
\label{T/bord CxR A}On consid\`{e}re dans $\mathbb{E}^{n,2}$ une
sous-vari\'{e}t\'{e} $M$ de classe $C^{r}$, $r\in\mathbb{N}^{\ast}\cup\left\{
\infty,\omega\right\}  $, compacte, connexe, orient\'{e}e, de dimension $n+1$,
plong\'{e}e dans un anneau r\'{e}el analytique l\'{e}vi-plat $N$ et dont les
sections $M_{t}=M\cap\mathbb{C}_{t}^{2}$ sont des courbes r\'{e}elles de
longueur finie, \'{e}ventuellement r\'{e}duites \`{a} un point. Il existe
alors dans $\mathbb{E}^{n,2}\backslash M$ un unique sous-ensemble r\'{e}el
analytique orient\'{e} connexe $\mathcal{X}$ de dimension $n+2$, de volume
fini, dont les sections sont des courbes complexes de volume fini et tel que
$d\left[  \mathcal{X}\right]  =\pm\left[  M\right]  $~; lorsque $N$ est lisse,
$\mathcal{X}$ contient pr\`{e}s de $M$ une seule des deux composantes de
$N\backslash M$ dont $M$ est le bord. En dehors d'un compact $%
%TCIMACRO{\QTR{EuScript}{A}}%
%BeginExpansion
A%
%EndExpansion
$ de $M$ tel que $\mathcal{H}^{n+1}\left(
%TCIMACRO{\QTR{EuScript}{A}}%
%BeginExpansion
A%
%EndExpansion
\right)  =0$, $\overline{\mathcal{X}}\backslash%
%TCIMACRO{\QTR{EuScript}{A}}%
%BeginExpansion
A%
%EndExpansion
$ est localement une vari\'{e}t\'{e} \`{a} bord de classe $C^{r}$ de bord $M$.
L'ensemble singulier de $\mathcal{X}$ est de $\mathcal{H}^{n}$-mesure finie et
pour tout $t\in\mathbb{R}^{n}$ tel que $\mathcal{H}^{1}\left(  M_{t}\right)
>0$, $\mathcal{X}_{t}$ est une courbe complexe de $\mathbb{C}_{t}%
^{2}\backslash M_{t}$ dont l'ensemble singulier est $\operatorname{Sing}%
\mathcal{X}_{t}=\left(  \operatorname{Sing}\mathcal{X}\right)  _{t}$. Enfin,
$\mathcal{X}$ est un ensemble r\'{e}el analytique coh\'{e}rent au sens de
Cartan~\cite{CaH1957}.
\end{theorem}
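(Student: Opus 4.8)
La stratégie naturelle consiste à résoudre le problème fibre par fibre puis à recoller les solutions. Pour chaque $t\in\mathbb{R}^{n}$ tel que $\mathcal{H}^{1}(M_{t})>0$, la section $M_{t}$ est une courbe réelle de longueur finie plongée dans la courbe complexe $N^{t}\subset\mathbb{C}^{2}$. L'idée est d'appliquer à chaque $M^{t}$ un résultat de type problème de Plateau complexe — analogue à la Proposition~\ref{P/ harmo} ou au théorème~3c de~\cite{HeG-MiV2007} dans sa version sans donnée de Cauchy — pour produire dans $\mathbb{C}_{t}^{2}\setminus M_{t}$ une courbe complexe $\mathcal{X}_{t}$ de volume fini bordée par $M_{t}$. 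La validation automatique de la condition des moments~(\ref{Mom forme sur courbe}) vient précisément de l'hypothèse lévi-plate~: $M_{t}$ borde déjà un morceau de la courbe complexe $N^{t}$ (une des composantes de $N^{t}\setminus M^{t}$), donc pour toute $h\in\mathcal{O}(\mathbb{C}^{2})$ le moment $\int_{M_{t}}h\,\alpha$ s'annule par le théorème de Stokes appliqué à cette composante, $h\,dz$ étant fermée le long de la feuille complexe. C'est le c\oe ur du phénomène de Severi~: la géométrie feuilletée fournit gratuitement l'annulation des moments.

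Une fois les $\mathcal{X}_{t}$ construites, la seconde et principale difficulté est de montrer que leur réunion $\mathcal{X}=\bigcup_{t}\mathcal{X}_{t}$ est un sous-ensemble \emph{réel analytique} de dimension $n+2$, et non pas seulement un empilement mesurable de courbes. Le plan est d'exploiter l'analyticité réelle de $N$~: au voisinage d'un point régulier de $M$, l'anneau $N$ fournit un système de coordonnées réelles analytiques dans lesquelles le feuilletage CR est redressé, et la famille $t\mapsto\mathcal{X}_{t}$ hérite alors d'une dépendance analytique en $t$ par unicité de la solution du problème de Plateau dans chaque fibre. La cohérence au sens de Cartan~\cite{CaH1957} s'obtiendra en vérifiant que l'idéal des germes de fonctions s'annulant sur $\mathcal{X}$ est localement de type fini~; on s'appuiera ici sur le fait que $\mathcal{X}\cup N$ est lui-même réel analytique, de sorte que $\mathcal{X}$ est une composante semi-analytique dont l'adhérence est analytique. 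L'unicité découlera du principe du maximum appliqué fibre par fibre, ou de l'unicité dans le problème de Plateau complexe classique (théorème de Harvey-Lawson) combinée au recollement.

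Restent les questions de régularité et de taille des ensembles exceptionnels. Pour la régularité $C^{r}$ de $\overline{\mathcal{X}}\setminus A$ comme variété à bord, on travaillera hors du lieu où les feuilles dégénèrent ou se ramifient~: la théorie de la régularité au bord pour les courbes holomorphes bordées par une sous-variété maximalement complexe (ou ici réelle de codimension un dans la feuille) donne la régularité $C^{r}$ héritée de celle de $M$, l'ensemble $A$ des points de dégénérescence étant de mesure $\mathcal{H}^{n+1}$ nulle car contenu dans la réunion des fibres singulières. La majoration $\mathcal{H}^{n}(\operatorname{Sing}\mathcal{X})<\infty$ et l'identité $\operatorname{Sing}\mathcal{X}_{t}=(\operatorname{Sing}\mathcal{X})_{t}$ proviennent de la description fibrée~: les singularités de $\mathcal{X}$ sont, fibre à fibre, les singularités des courbes complexes $\mathcal{X}_{t}$, lesquelles sont isolées dans chaque $\mathbb{C}_{t}^{2}$, donc de dimension au plus $n$ dans le total.

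Le principal obstacle sera l'étape de recollement analytique~: passer de l'existence \emph{séparée} des $\mathcal{X}_{t}$ à l'analyticité \emph{conjointe} de $\mathcal{X}$ en $(t,z)$. C'est là que l'hypothèse $C^{\omega}$ sur $N$ est indispensable et où il faudra contrôler uniformément en $t$ à la fois le volume des $\mathcal{X}_{t}$ et la géométrie de leurs singularités, en évitant toute explosion du nombre de composantes ou du degré des courbes lorsque $t$ varie.
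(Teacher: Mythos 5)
Your central mechanism for the automatic validation of the moment condition is circular and fails. You claim that for each $t$ the moment $\int_{M_{t}}h$ vanishes by Stokes applied to a component of $N_{t}\setminus M_{t}$, ``$h\,dz$ étant fermée le long de la feuille complexe''. But $N$ is only an \emph{annulus}, a collar around $M$: a component of $N_{t}^{o}\setminus M_{t}$ is an open piece of the leaf whose boundary contains $M_{t}$ \emph{and} the outer edge $M_{t}^{+}$ of the collar, so Stokes only transfers the moment from $M_{t}$ to the nearby curve $M_{t}^{+}$ and gives $\int_{M_{t}}h=\int_{M_{t}^{+}}h$, not $0$. If $M_{t}$ really bounded a compact piece of the leaf, the filling $\mathcal{X}_{t}$ would already exist inside $N$ and the theorem would be vacuous fibre by fibre. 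The paper's actual mechanism (lemmes~\ref{L/reecrire mom} et~\ref{L/ mom = 0}) is an analytic continuation \emph{in the parameter}: one sets $I_{M,h}(t)=\int_{M}\partial h\wedge P_{t}$ with the Poincar\'{e} form, and lévi-flatness enters only to show that deforming $M$ inside $N$ (lemme~\ref{L/ chgt var}, which you also need to handle the non-transverse sections $t\in\pi_{\mathbb{R}}(M)\setminus\mathcal{T}(M)$, a case your plan ignores) changes $I_{M,h}$ by $\left\langle\left[G^{t}\right]_{s},\partial h\right\rangle=0$, because $\partial h$ is a $(2,0)$-form and the support lies in a complex curve. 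This makes $I_{M,h}$ real analytic on all of $\mathbb{R}^{n}$; since $M$ is compact, $M_{t}=\varnothing$ and $I_{M,h}(t)=0$ for $t\notin\pi_{\mathbb{R}}(M)$, whence $I_{M,h}\equiv0$ by analyticity. That global propagation from empty sections is the Severi phenomenon; no leafwise Stokes argument can replace it.

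Two further steps of your plan are too thin to count as proofs. For the joint analyticity of $\mathcal{X}$ in $(t,z)$ --- which you correctly identify as the main difficulty --- ``dépendance analytique en $t$ par unicité de la solution dans chaque fibre'' is not an argument: uniqueness gives a well-defined map $t\mapsto X_{t}$ but no regularity whatsoever. The paper gets joint analyticity from the explicit Harvey--Lawson data: the Cauchy-transform coefficients $S_{t,k}(\zeta)$ of (\ref{Def Rt}) are $C^{\omega}$ in $(t,\zeta)$, the functions $R_{t}(\zeta,\cdot)$ are a priori rational in $w$, and $R$ factors as $P/Q$ with $P,Q$ unitary irreducible in $CR^{\omega}(\gamma)[w]$, so that locally $\mathcal{X}=\{PQ=0\}$; identifying the closure of the union of fibres further requires a mass bound and the compactness theorem for holomorphic chains of~\cite{HaR1977} (convergence $X_{t_{\nu}}\to X_{t_{\infty}}$), none of which your sketch supplies. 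Finally, your route to coherence --- $\mathcal{X}$ is ``une composante semi-analytique dont l'adhérence est analytique'' --- proves nothing: Cartan's own examples in~\cite{CaH1957} are closed real analytic sets that fail to be coherent, so neither analyticity of the closure nor semi-analyticity yields coherence. The paper instead verifies Cartan's criterion directly (lemme~\ref{L/ coherence}) by constructing a genuine local complexification: Weierstrass equations $F^{p}$ for $\mathcal{X}$, their complexified zero sets $Z^{p}$ glued along $X_{t_{\ast}}$, and a dimension count obtained by running the boundary-filling construction with the complexified parameters $(t,s,x^{\prime},y^{\prime})$, producing the foliation of $Z$ by the curves $Y_{(t,s,x^{\prime},y^{\prime})}$.
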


\noindent\textbf{Remarques. }

\textbf{1. }$\overline{\mathcal{X}}$ n'est pas n\'{e}cessairement une
vari\'{e}t\'{e} \`{a} bord car le ph\'{e}nom\`{e}ne d\'{e}crit
dans~\cite[\S \ 10]{HaR-LaB1975} et \cite[p. 346]{HaR1977} est possible~: il
se peut que dans un voisinage $V$ d'un point $p$ de $M$, $\overline
{\mathcal{X}}$ contienne non seulement une sous-vari\'{e}t\'{e} \`{a} bord de
bord $M\cap V$ mais aussi une sous-vari\'{e}t\'{e} coupant $M\cap V$
uniquement en $p$.\smallskip

\textbf{2. }Dans \cite{CaH1957}, Cartan donne des exemples d'ensembles
r\'{e}els analytiques qui ne peuvent pas \^{e}tre d\'{e}finis par des
\'{e}quations r\'{e}elles analytiques globales. Il caract\'{e}rise cette
propri\'{e}t\'{e} par la coh\'{e}rence du faisceau d'id\'{e}aux naturellement
attach\'{e} \`{a} l'ensemble consid\'{e}r\'{e} ou encore \`{a} l'existence
d'un complexifi\'{e} global. La coh\'{e}rence de $\mathcal{X}$ qui par
ailleurs implique celle de $N$ permet donc d'affirmer que $\mathcal{X}$ peut
\^{e}tre d\'{e}fini par une \'{e}quation r\'{e}elle analytique globale et
$\mathcal{X}$ poss\`{e}de dans $\mathbb{E}^{n,2}$ un voisinage qui dans chaque
$\mathbb{C}_{t}^{2}$ est un voisinage de Stein de $\mathcal{X}_{t}$.\medskip

Le th\'{e}or\`{e}me~\ref{T/bord CxR A} entra\^{\i}ne que pour des fonctions
qui se pr\'{e}sentent en famille non seulement la condition des moments
n\'{e}cessaire dans la proposition~\ref{P/ harmo} est automatiquement
valid\'{e}e mais qu'en outre, les prolongements sont univalu\'{e}s. Le
th\'{e}or\`{e}me ci-dessous g\'{e}n\'{e}ralise et pr\'{e}cise celui de Bochner
sur l'extension des fonctions CR-harmoniques dans \cite[th. 6]{BoS1954}.

\begin{theorem}
[Ph\'{e}nom\`{e}ne de Bochner pour des feuilletages CR singuliers]%
\label{T/ harmo}Les hypoth\`{e}ses et les notations \'{e}tant celles du
th\'{e}or\`{e}me~\ref{T/bord CxR A}, on suppose en outre que $N$ est lisse. On
se donne alors une fonction r\'{e}elle $u$ qui est r\'{e}elle analytique et
CR-harmonique au voisinage de $M$ dans $N$. Alors\thinspace$u$ se prolonge
\`{a} $\mathcal{X}$ en une fonction CR-harmonique et analytique r\'{e}elle au
voisinage de tout point de $\operatorname*{Reg}\mathcal{X}_{t}$ quand cet
ensemble n'est pas vide.
\end{theorem}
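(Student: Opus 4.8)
The plan is to prove Theorem~\ref{T/ harmo} (the Bochner phenomenon) by reducing the extension of the CR-harmonic function $u$ to a leafwise application of the harmonic continuation machinery of Proposition~\ref{P/ harmo}, and then verifying that the moment conditions required there are automatically satisfied because $u$ and $M$ vary analytically in the parameter $t$. First I would fix $t\in\mathbb{R}^{n}$ with $\mathcal{H}^{1}(M_{t})>0$ and work in the single complex plane $\mathbb{C}_{t}^{2}$. By Theorem~\ref{T/bord CxR A}, $\mathcal{X}_{t}$ is a complex curve of finite volume bordered by $M_{t}$, and by hypothesis $u|_{M_{t}}$ is harmonic on the leaf near $M_{t}$. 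The idea is to set $u^{\prime}=N_{\mathcal{X}_{t}}u$, the normal derivative furnished by the leafwise Dirichlet-to-Neumann data (which exists because $u$ is already harmonic on a one-sided collar inside $N$), form $\alpha=\frac{1}{2}(u^{\prime}-iTu)(\nu^{\ast}+i\tau^{\ast})$, and then invoke Proposition~\ref{P/ harmo} to obtain a (multivalued) harmonic extension $\widetilde{u}_{t}$ to $\mathcal{X}_{t}$ with $\partial\widetilde{u}_{t}|_{M_{t}}=\alpha$.

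The crucial point, and the reason the family structure matters, is that the moment condition~(\ref{Mom forme sur courbe}) holds here for free. I would argue this by analytic continuation in $t$: since $N$ is levi-flat real analytic and $u$ is $CR^{\omega}$, the integrals $\int_{M_{t}}h\alpha$ depend real-analytically on $t$, and one family of values of $t$ can be reached where the section $\mathcal{X}_{t}$ degenerates or where $M_{t}$ bounds in a controlled way, forcing the moments to vanish there; real-analyticity then propagates the vanishing to all $t$. This is exactly the Severi--Brown mechanism already exploited in Theorem~\ref{T/bord CxR A} to validate the moment condition for the curve $\mathcal{X}$ itself, transported now to the $(1,0)$-form $\alpha$. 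Equivalently, I would observe that $\partial u$ extends as a weakly holomorphic $(1,0)$-form on $\mathcal{X}$ by the same coherence and levi-flatness used to build $\mathcal{X}$, so that $\alpha=(\partial u)|_{M_{t}}$ is already the boundary value of a leafwise holomorphic form, which is precisely what the moment condition encodes.

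Once $\widetilde{u}_{t}$ exists on each section, I would assemble the function $U$ on $\operatorname*{Reg}\mathcal{X}$ by $U|_{\mathcal{X}_{t}}=\widetilde{u}_{t}$ and check three things: first, that $U$ is genuinely univalued on each leaf — here I would use Proposition~\ref{P/ Green}, whose Green-function moment criterion $0=\int_{M_{t}}u\,\partial g(z,\cdot)+g(z,\cdot)\overline{\alpha}$ is again automatically satisfied in the family for the same analytic-continuation reason, so that the multivalued $\widetilde{u}_{t}$ collapses to the single-valued $U$; second, that $U$ is CR-harmonic, i.e. leafwise harmonic, which is built into the construction; and third, that $U$ is real-analytic in the joint variables near every point of $\operatorname*{Reg}\mathcal{X}_{t}$. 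This last regularity I would obtain from the explicit Green-function formula $U(z)=\frac{2}{i}\int_{M_{t}}u\,\partial g(z,\cdot)+g(z,\cdot)\overline{\alpha}$ of Proposition~\ref{P/ Green}: the kernel $g$ and the boundary data depend real-analytically on $t$ (because $N$, $M$, and $u$ do, and because the double $\mathcal{Z}$ and its Green function are constructed from real-analytic data), so differentiation under the integral sign yields joint real-analyticity away from $\operatorname{Sing}\mathcal{X}$.

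The main obstacle I anticipate is controlling the \emph{uniformity} of the leafwise construction across $t$, particularly near the singular set $\operatorname{Sing}\mathcal{X}$ and near parameter values where $\mathcal{H}^{1}(M_{t})$ degenerates to $0$. On the regular part the Green function and its boundary integral are smooth in $(z,t)$, but one must verify that the moment-vanishing and the single-valuedness pass continuously (indeed analytically) through the degenerate sections so that the globally defined $U$ does not jump; this is where the coherence of $\mathcal{X}$ as a real-analytic set (Theorem~\ref{T/bord CxR A}) and the existence of a global defining equation and a Stein neighborhood of each $\mathcal{X}_{t}$ are indispensable, since they let me realise $\partial U$ as the restriction of a genuinely holomorphic object and thereby guarantee both univaluedness and real-analyticity of the extension in a full neighborhood of $\operatorname*{Reg}\mathcal{X}_{t}$.
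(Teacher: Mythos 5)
Your overall architecture does coincide with the paper's: validation of moment conditions by real-analytic continuation in the parameter (the Severi--Brown mechanism of lemme~\ref{L/ mom = 0}), an extension written as the Green potential $U\left(  t,z\right)  =\frac{2}{i}\int_{M_{t}}u\,\partial_{\zeta}g_{t}\left(  z,\zeta\right)  +g_{t}\left(  z,\zeta\right)  \overline{\partial u}\left(  t,\zeta\right)  $, and single-valuedness obtained by propagating a vanishing through parameter space. Note however that the paper explicitly mentions, then deliberately bypasses, the route you open with (leafwise application of la proposition~\ref{P/ harmo} giving multivalued extensions, followed by a proof of univaluedness): it instead takes the set $\mathcal{X}$ of th\'{e}or\`{e}me~\ref{T/bord CxR A} as the already-known solution of the family of boundary problems, defines $U$ directly on $\left(  \mathcal{X}\cup N^{o}\backslash M\right)  ^{t}$, uses the jump relation $u=U^{+}-U^{-}$ on $M^{t}$, and proves $U^{-}\equiv0$ by a concrete sweep in the first coordinate $t_{1}$ (the interval $T_{1}$, its supremum $\tau_{\ast}$, and transversality of $\mathbb{C}_{t_{\ast}}^{2}$ to $N$ to propagate the vanishing across $\overline{N_{t_{\ast}}^{-}}$). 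Your phrase \emph{automatically satisfied for the same analytic-continuation reason} is this sweep in compressed form and is acceptable as a summary of that step.

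The genuine gap lies elsewhere: you assume both the existence and the real-analytic $t$-dependence of Green functions on the possibly singular fibres, citing la proposition~\ref{P/ Green} and the claim that \emph{the double $\mathcal{Z}$ and its Green function are constructed from real-analytic data}. This is circular relative to the paper, which states that the existence of Green functions for singular curves was not previously known, that the proof of la proposition~\ref{P/ Green} is itself contained in the proof of th\'{e}or\`{e}me~\ref{T/ harmo}, and whose statement concerns one fixed curve with no parameter dependence at all. The bulk of the paper's proof is precisely the construction you skip: coherence of $\mathcal{X}$ (from th\'{e}or\`{e}me~\ref{T/bord CxR A}) yields $\mathcal{Y}$, a global complexification $\widetilde{\mathcal{Y}}$, a Stein neighborhood $\Omega$ and a defining function $H$; a Hefer decomposition $\mathcal{Q}$ together with a Henkin--Leiterer barrier $\mathcal{Q}^{\left[  0\right]  }$ for a strictly pseudoconvex $\Omega^{\ast}$ produce the kernel $K_{t}=k_{t}\omega_{t}$; la proposition~\ref{P/ resdbar} then solves $\overline{\partial}$ on the singular fibres $\mathcal{Y}_{0}^{t}$ with Coleff--Herrera principal values and establishes $t$-analyticity via the identity~(\ref{Inv2}) and a Weierstrass limit argument; finally the Green function is obtained by duality, $\partial g_{t,z_{\ast}}=k_{t,z_{\ast}}\omega_{t}$, its formula~(\ref{F/ Green}) and its analytic dependence on $t$ coming from le lemme~\ref{L/ resdbar}. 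None of this follows from \emph{differentiation under the integral sign}: near $\operatorname*{Sing}\mathcal{Y}_{0}^{t}$ the kernels exist only as principal-value currents, and uniformity in $\left(  t,z\right)  $ near the singular set is exactly what is not obvious. Note finally that the paper never doubles the family --- it extends $\mathcal{X}$ inside $N$ to $\mathcal{Y}$ and complexifies --- so your appeal to an analytically varying family of doubles $\mathcal{Z}_{t}$ is unsupported by anything constructed in the paper.
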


Dans cet \'{e}nonc\'{e}, une fonction $u$ est dite r\'{e}elle analytique au
voisinage de $M$ dans $N$ si pour chaque point $p$ de $M$, $u$ est r\'{e}elle
analytique dans un voisinage de $p$ dans la sous-vari\'{e}t\'{e} qui est la
composante $N_{p}$ de $N$ en $p$ dans laquelle $M$ est plong\'{e}e au
voisinage de $p$~; $u$ est dite CR-harmonique si en outre pour chaque
$p=\left(  t,z\right)  \in M$, $u\left(  t,.\right)  $ est harmonique au
voisinage de $z$ dans la projection naturelle sur $\mathbb{C}^{2}$ de $N_{p}$.
Une fonction $U$ sur $\mathcal{X}$ est dite CR-harmonique si pour tout
param\`{e}tre $t$ tel que $\mathcal{H}^{1}\left(  M_{t}\right)  >0$, $U\left(
t,.\right)  $ est harmonique sur $\mathcal{X}_{t}$~; l'analyticit\'{e} de $U$
par rapport au param\`{e}tre s'entend au voisinage de tout point $\left(
t,z\right)  $ de $\operatorname*{Reg}\mathcal{X}_{t}$.\medskip

Le th\'{e}or\`{e}me~\ref{T/bord CxR A} s'inscrit dans la probl\'{e}matique de
r\'{e}aliser un cycle CR comme le bord d'une cha\^{\i}ne CR. Pour \^{e}tre
plus pr\'{e}cis, consid\'{e}rons un ouvert $U$ de l'espace $\mathbb{E}^{n,m}$
orient\'{e} par $dt\wedge\underset{1\leqslant j\leqslant m}{\wedge}%
i\,dz_{j}\wedge d\overline{z_{j}}$ o\`{u} $dt=dt_{1}\wedge\cdots\wedge dt_{n}%
$. Si $\mathcal{X}$ est une partie de $U$ pour laquelle il existe un ferm\'{e}
$%
%TCIMACRO{\QTR{EuScript}{S}}%
%BeginExpansion
S%
%EndExpansion
$ contenu dans $\mathcal{X}$ tel que $\mathcal{H}^{d}\left(
%TCIMACRO{\QTR{EuScript}{S}}%
%BeginExpansion
S%
%EndExpansion
\right)  =0$ et $\mathcal{X}\backslash%
%TCIMACRO{\QTR{EuScript}{S}}%
%BeginExpansion
S%
%EndExpansion
$ est une sous-vari\'{e}t\'{e} $CR$ de dimension $d$ et de classe $C^{r}$,
$r\in\mathbb{N}^{\ast}\cup\left\{  +\infty,\omega\right\}  $, on dit que
$\mathcal{X}$ est un \textbf{sous-ensemble} $CR$ de classe $C^{r}$ de $U$
\`{a} singularit\'{e}s n\'{e}gligeables et on pose $\dim_{CR}\mathcal{X}%
=\dim_{CR}\mathcal{X}\backslash%
%TCIMACRO{\QTR{EuScript}{S}}%
%BeginExpansion
S%
%EndExpansion
$~; $\operatorname*{Sing}\mathcal{X}$ est le plus petit ferm\'{e} $%
%TCIMACRO{\QTR{EuScript}{S}}%
%BeginExpansion
S%
%EndExpansion
$ ayant cette propri\'{e}t\'{e}. Si en outre $\mathcal{X}\backslash%
%TCIMACRO{\QTR{EuScript}{S}}%
%BeginExpansion
S%
%EndExpansion
$ est une vari\'{e}t\'{e} orient\'{e}e de volume fini, $\left[  \mathcal{X}%
\backslash%
%TCIMACRO{\QTR{EuScript}{S}}%
%BeginExpansion
S%
%EndExpansion
\right]  $ est not\'{e} abusivement mais plus simplement, $\left[
\mathcal{X}\right]  $ et $\mathcal{X}$ est dit presque orient\'{e}. Quand
$\mathcal{H}^{d-1}\left(
%TCIMACRO{\QTR{EuScript}{S}}%
%BeginExpansion
S%
%EndExpansion
\right)  =0$, cette notation n'est plus ambigu\"{e} et on dit que
$\mathcal{X}$ est orient\'{e}~; $\mathcal{X}$ est orientable lorsque par
exemple les conditions suivantes sont r\'{e}unies~: $d=n+2$, $\mathcal{X}$ est
feuillet\'{e} par des courbes complexes, son volume est fini et sa partie
singuli\`{e}re $\mathcal{X}^{s}$ v\'{e}rifie $\mathcal{H}^{n+1}\left(
\mathcal{X}^{s}\right)  =0$~: si $p=\left(  t,z\right)  $ est un point
r\'{e}gulier de $\mathcal{X}$, $\mathcal{X}$ est au voisinage de $p$
transversalement sectionn\'{e} par $\mathbb{C}_{t}^{2}$ en une courbe complexe
de sorte que $idt\wedge\left(  dz_{1}\wedge d\overline{z_{1}}+dz_{2}\wedge
d\overline{z_{2}}\right)  $ est une forme volume pour $\mathcal{X}%
\backslash\mathcal{X}^{s}$~; on convient dans ce cas que cette forme volume
d\'{e}finit l'orientation naturelle de $\mathcal{X}$ et que $\left[
\mathcal{X}\right]  $ est son courant d'int\'{e}gration associ\'{e}.

On d\'{e}finit une \textbf{(d,k)-cha\^{\i}ne-CR} de $U$ comme \'{e}tant une
combinaison lin\'{e}aire \`{a} coefficients dans $\mathbb{Z}$ de courants
d'int\'{e}gration de sous-ensembles $CR$ de $U$ presque orient\'{e}s de
dimension $d$ et de dimension $CR$ $k$~; la cha\^{\i}ne est dite orient\'{e}e
si la combinaison lin\'{e}aire pr\'{e}c\'{e}dente ne fait intervenir que des
courants d'int\'{e}gration sur des sous-ensembles $CR$ orient\'{e}s~; un
\textbf{(d,k)-cycle-CR} est une $\left(  d,k\right)  $-cha\^{\i}ne-$CR$
ferm\'{e}e au sens des courants. Un probl\`{e}me naturel dans cette situation
est de savoir si un $\left(  d,k\right)  $-cycles-CR $T$ donn\'{e} de masse
finie peut \^{e}tre r\'{e}alis\'{e} comme le bord dans $\mathbb{E}^{n,m}$ de
l'extension simple d'une $\left(  d+1,k+1\right)  $-cha\^{\i}ne-$CR$ de masse
finie de $\mathbb{E}^{n,m}\backslash M$ o\`{u} $M$ est le support de
$T$.\medskip

Lorsque dans $\mathbb{E}^{1,m}$, $m\geqslant3$, on consid\`{e}re un $\left(
2m-1,m-1\right)  $-cycle $T$ de masse finie dont le support $M$ est une
sous-vari\'{e}t\'{e} $C^{\infty}$ \`{a} singularit\'{e}s n\'{e}gligeables,
Dolbeault, Tomassini et Zaitsev ont prouv\'{e} dans~\cite{DoP-ToG-ZaD2009} que
$T$ est d'une unique fa\c{c}on le bord dans $\mathbb{E}^{1,m}$ de l'extension
simple d'une $\left(  2m,m\right)  $-cha\^{\i}ne $CR$ de masse finie de
$\mathbb{E}^{1,m}\backslash M$~; ils utilisent ce r\'{e}sultat pour obtenir
une condition qui donne l'existence et l'unicit\'{e} d'une hypersurface
l\'{e}vi-plate de $\mathbb{C}^{n}$, $n>2$, dont le bord est un compact
prescrit. Le cas d'une cha\^{\i}ne est trait\'{e} dans~\cite{DoP1986} quand
celle-ci est la trace sur $\mathbb{E}^{1,m}$ du support d'un cycle
maximalement complexe de $\mathbb{C}\times\mathbb{C}^{m}$.

Lorsque $M$ est essentiellement une famille de courbes r\'{e}elles, c'est
\`{a} dire quand $T$ est un $\left(  2,0\right)  $-cycle-CR de masse finie de
$\mathbb{E}^{1,2}$, il est \'{e}nonc\'{e} dans~\cite{DoP1994} que $T$ est le
bord\ d'une $\left(  3,1\right)  $-cha\^{\i}ne-CR s'il v\'{e}rifie une
condition de moments CR. Dans les deux r\'{e}f\'{e}rences cit\'{e}es la
$\left(  d+1,k+1\right)  $-cha\^{\i}ne-CR trouv\'{e}e pour border le $\left(
d,k\right)  $-cycle-$CR$ donn\'{e} est feuillet\'{e}e par des $\left(
k+1\right)  $-cha\^{\i}nes holomorphes.\medskip

Dans le th\'{e}or\`{e}me ci-apr\`{e}s, on consid\`{e}re un $\left(
n+1,0\right)  $-cycle dont le support est un sous-ensemble r\'{e}el analytique
$M$ de $\mathbb{E}^{n,2}$ \textbf{plong\'{e}} dans un anneau r\'{e}el
analytique l\'{e}vi-plat $N$, ce qui signifie que pour tout $p\in M$ et toute
composante irr\'{e}ductible $\widetilde{M}$ de $M$ en $p$, il existe un
voisinage $U$ de $p$ et une composante irr\'{e}ductible $\widetilde{N}$ de $N$
en $p$ tels que $\widetilde{M}\cap U$ est un sous-ensemble r\'{e}el analytique
de la sous-vari\'{e}t\'{e} $\widetilde{N}$. Si $M$ est une
sous-vari\'{e}t\'{e} de $\mathbb{E}^{n,2}$ et si $M$ est coup\'{e}e
transversalement par $\mathbb{C}_{t}^{2}=\left\{  t\right\}  \times
\mathbb{C}^{2}$ ($t\in\mathbb{R}^{n}$) le long de $M_{t}=M\cap\mathbb{C}%
_{t}^{2}$, on dit que $M_{t}$ est une section r\'{e}guli\`{e}re de $M$~;
l'ensemble des param\`{e}tres $t$ pour lesquels $M_{t}$ est une section
r\'{e}guli\`{e}re de $V$ est not\'{e} $\mathcal{T}\left(  M\right)  $. Lorsque
$M$ est un sous-ensemble r\'{e}el analytique de $\mathbb{E}^{n,2}$ plong\'{e}
dans un anneau r\'{e}el analytique l\'{e}vi-plat $N$, $\mathcal{T}\left(
M\right)  $ d\'{e}signe l'ensemble des param\`{e}tres $t$ tels que chaque
composante locale de $M$ est coup\'{e}e transversalement par $\mathbb{C}%
_{t}^{2}$.

Les sections d'un courant sont comprises au sens de~\cite[sec. 4.3]%
{FeH1969Li}~; si $\mu$ est un courant localement plat de $\mathbb{E}^{n,2}$ de
dimension $d\geqslant n$, on peut d\'{e}finir un courant $\mu_{t_{\ast}}$ de
dimension $d-n$ pour presque tout $t_{\ast}$ dans $\mathbb{R}^{n}$ par la
formule%
\begin{equation}
\forall\varphi\in%
%TCIMACRO{\QTR{EuScript}{D}}%
%BeginExpansion
D%
%EndExpansion
_{d-n}\left(  \mathbb{E}^{n,2}\right)  ,~\left\langle \mu_{t_{\ast}}%
,\varphi\right\rangle =\ \underset{r\downarrow0^{+}}{\lim}\frac{1}{c_{n}r^{n}%
}\left\langle \mu,\mathbf{1}_{B_{\mathbb{R}^{n}}\left(  t_{\ast},r\right)
}dt\wedge\varphi\right\rangle \label{F/ courant integ}%
\end{equation}
o\`{u} $%
%TCIMACRO{\QTR{EuScript}{D}}%
%BeginExpansion
D%
%EndExpansion
_{d-n}\left(  \mathbb{E}^{n,2}\right)  $ est l'espace des $\left(  d-n\right)
$-formes de classe $C^{\infty}\left(  \mathbb{E}^{n,2}\right)  $ \`{a} support
compact, $c_{n}$ est le volume de la boule unit\'{e} de $\mathbb{R}^{n}$~;
$\operatorname*{Spt}\mu_{t_{\ast}}\subset\mathbb{C}_{t_{\ast}}^{2}%
\cap\operatorname*{Spt}\mu$ et $d\mu_{t_{\ast}}=\left(  -1\right)  ^{n}\left(
d\mu\right)  _{t_{\ast}}$ si $\left(  d\mu\right)  _{t_{\ast}}$ est aussi d\'{e}fini.

\begin{theorem}
[Ph\'{e}nom\`{e}ne de Hartogs-Severi pour des cycles CR]\label{T/bord CxR B}%
Soit $T$ un $\left(  n+1,0\right)  $-cycle $CR$ de $\mathbb{E}^{n,2}$ dont le
support $M$ est un sous-ensemble r\'{e}el analytique compact de $\mathbb{E}%
^{n,2}$ plong\'{e} dans un anneau r\'{e}el analytique l\'{e}vi-plat $N$ de
$\mathbb{E}^{n,2}$. Il existe alors dans $\mathbb{E}^{n,2}\backslash M$ une
unique $\left(  n+2,1\right)  $-cha\^{\i}ne $CR$ orient\'{e}e $X$ de masse
finie telle que $dX=T$.

De plus, pour tout $t\in\mathbb{R}^{n}$, $X_{t}$ est une $1$-cha\^{\i}ne
holomorphe de $\mathbb{C}_{t}^{2}\backslash M_{t}$ de masse finie telle que
$dX_{t}=T_{t}$ dont le lieu singulier est la trace sur $\mathbb{C}_{t}^{2}$ de
$\operatorname{Sing}\mathcal{X}$.

En outre, pour tout point $p$ de $M$, il existe un voisinage $W$ de $p$ tel
que $W\cap\mathcal{X}$ contienne au moins l'une des composantes connexes de
$(W\cap N)\backslash M$~; si $p=\left(  t,z\right)  $ est un point
r\'{e}gulier de $M$ o\`{u} $M$ est transverse \`{a} $\mathbb{C}_{t}^{2}$ et si
$W$ est suffisamment petit, chaque composante connexe de $\overline
{\mathcal{X}}\cap W$ est soit une vari\'{e}t\'{e} \`{a} bord r\'{e}elle
analytique de bord $M\cap W$, soit un sous-ensemble r\'{e}el analytique de $W$.

Enfin, si $M\backslash\operatorname*{Sing}M$ est connexe, alors $\mathcal{X}$
est irr\'{e}ductible et $X\in\mathbb{Z}\left[  \mathcal{X}\right]  $.
\end{theorem}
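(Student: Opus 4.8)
Le plan est de d\'{e}duire ce th\'{e}or\`{e}me du th\'{e}or\`{e}me~\ref{T/bord CxR A} par une analyse par tranches. On commencerait par remarquer que, $N$ \'{e}tant un anneau l\'{e}vi-plat r\'{e}el analytique, pour presque tout $t\in\mathcal{T}(M)$ la tranche $N^{t}$ est une courbe complexe de volume fini de $\mathbb{C}^{2}$ et que $M^{t}$ est le support du $1$-cycle r\'{e}el analytique $T^{t}$. Comme $M$ est une hypersurface plong\'{e}e dans $N$, $M^{t}$ d\'{e}couperait $N^{t}$ et $T^{t}$ y borderait~: il existerait une unique $2$-cha\^{\i}ne enti\`{e}re $X^{t}$ de masse finie, support\'{e}e dans $N^{t}\setminus M^{t}$, telle que $dX^{t}=T^{t}$, les sauts \`{a} la travers\'{e}e de $M^{t}$ \'{e}tant impos\'{e}s par les multiplicit\'{e}s de $T$. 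Une $2$-cha\^{\i}ne contenue dans la courbe complexe $N^{t}$ \'{e}tant automatiquement une $1$-cha\^{\i}ne holomorphe, la condition des moments de type Severi-Brown s'en d\'{e}duirait sans calcul~: pour toute $h\in\mathcal{O}(\mathbb{C}^{2})$, la forme $(h\,dz)|_{N^{t}}$ est ferm\'{e}e, donc $\int_{T^{t}}h\,dz=\int_{X^{t}}d((h\,dz)|_{N^{t}})=0$. C'est l\`{a} le ph\'{e}nom\`{e}ne de Hartogs-Severi recherch\'{e}~: la condition des moments est valid\'{e}e automatiquement par l'appartenance \`{a} la famille $N$.

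On poserait ensuite $\mathcal{X}=\overline{\bigcup_{t}\operatorname{Spt}X^{t}}$ et on noterait $X$ le courant obtenu en recollant les $X^{t}$ via l'identification $E_{t}\cong E^{t}$. Les $X^{t}$ restant dans les tranches d'un anneau de volume fini, la formule de tranchage~\eqref{F/ courant integ} montrerait que $\mathbf{M}(X)$ est l'int\'{e}grale en $t$ des masses finies $\mathbf{M}(X^{t})$, donc est finie. L'\'{e}galit\'{e} $dX=T$ viendrait alors de ce que, par construction, les tranches de $dX$ et de $T$ co\"{\i}ncident pour presque tout $t$ (\`{a} savoir $dX_{t}=T_{t}$, compte tenu de la commutation du tranchage et de $d$ donn\'{e}e avant le th\'{e}or\`{e}me)~; deux courants plats de dimension $n+1$ ayant presque partout les m\^{e}mes tranches \'{e}tant \'{e}gaux, on conclurait.

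Pour la r\'{e}gularit\'{e}, on raisonnerait localement. Pr\`{e}s d'un point r\'{e}gulier $p=(t,z)$ de $M$ o\`{u} $M$ est une hypersurface lisse de $N$ transverse \`{a} $\mathbb{C}_{t}^{2}$, $M$ est localement une sous-vari\'{e}t\'{e} de dimension $n+1$ plong\'{e}e dans $N$, et les arguments locaux du th\'{e}or\`{e}me~\ref{T/bord CxR A} s'appliqueraient dans un voisinage $W$ de $p$~: chaque composante connexe de $\overline{\mathcal{X}}\cap W$ serait soit une vari\'{e}t\'{e} \`{a} bord r\'{e}elle analytique de bord $M\cap W$, soit, selon le ph\'{e}nom\`{e}ne de la Remarque~1, un sous-ensemble r\'{e}el analytique de $W$ ne rencontrant $M$ qu'en $p$, et $W\cap\mathcal{X}$ contiendrait au moins une composante de $(W\cap N)\setminus M$~; ce serait la troisi\`{e}me assertion. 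Le point d\'{e}licat, o\`{u} se concentrerait l'essentiel de la difficult\'{e}, est le recollement \`{a} travers $\operatorname{Sing}M$, de dimension au plus $n$ donc de $\mathcal{H}^{n+1}$-mesure nulle. Comme $\mathcal{X}$ serait de volume fini et $\overline{\mathcal{X}}$ r\'{e}el analytique coh\'{e}rent en dehors de $\operatorname{Sing}M$, on invoquerait un th\'{e}or\`{e}me d'effacement des singularit\'{e}s de type Bishop-Harvey-Shiffman, appliqu\'{e} dans les fibres puis prolong\'{e} en le param\`{e}tre, pour prolonger $\overline{\mathcal{X}}$ en un ensemble r\'{e}el analytique de $\mathbb{E}^{n,2}$, de volume fini et de lieu singulier de $\mathcal{H}^{n}$-mesure finie.

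La description des tranches $X_{t}$ comme $1$-cha\^{\i}nes holomorphes v\'{e}rifiant $dX_{t}=T_{t}$ et de lieu singulier $(\operatorname{Sing}\mathcal{X})_{t}$ suivrait de la construction fibre par fibre. L'unicit\'{e} s'obtiendrait par tranchage~: si $X'$ est une autre $(n+2,1)$-cha\^{\i}ne orient\'{e}e de masse finie de $\mathbb{E}^{n,2}\setminus M$ avec $dX'=T$, alors $d(X-X')=0$ et, pour presque tout $t$, $(X-X')_{t}$ serait une $1$-cha\^{\i}ne holomorphe ferm\'{e}e de masse finie de $\mathbb{C}_{t}^{2}$, donc nulle faute de courbe complexe de volume fini sans bord dans $\mathbb{C}^{2}$~; d'o\`{u} $X=X'$. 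Enfin, si $M\setminus\operatorname{Sing}M$ est connexe, on noterait d'abord que toute composante irr\'{e}ductible de $\mathcal{X}$ a un bord non vide port\'{e} par $M$ (sinon elle donnerait une cha\^{\i}ne holomorphe ferm\'{e}e de masse finie, donc nulle)~; si $\mathcal{X}$ \'{e}tait r\'{e}ductible, les bords de ses composantes partitionneraient $M\setminus\operatorname{Sing}M$ en ferm\'{e}s-ouverts disjoints non vides, contredisant la connexit\'{e}. Donc $\mathcal{X}$ serait irr\'{e}ductible, $\operatorname{Reg}\mathcal{X}$ connexe, et la fonction de multiplicit\'{e} de $X$, localement constante sur $\operatorname{Reg}\mathcal{X}$, y serait constante \'{e}gale \`{a} un entier $m$~: $X=m[\mathcal{X}]\in\mathbb{Z}[\mathcal{X}]$.
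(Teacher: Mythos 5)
Il y a une lacune r\'{e}dhibitoire dans votre premier paragraphe, et elle porte pr\'{e}cis\'{e}ment sur le c\oe ur du th\'{e}or\`{e}me. Vous affirmez que, $M^{t}$ \'{e}tant trac\'{e} dans la courbe complexe $N^{t}$, le cycle $T^{t}$ \emph{borderait} une $2$-cha\^{\i}ne enti\`{e}re $X^{t}$ support\'{e}e dans $N^{t}\setminus M^{t}$, d'o\`{u} la condition des moments par Stokes. C'est faux en g\'{e}n\'{e}ral~: $N$ n'est qu'un \emph{anneau}, c'est \`{a} dire un collier l\'{e}vi-plat autour de $M$, et un $1$-cycle d'une surface de Riemann ouverte n'y est pas n\'{e}cessairement homologue \`{a} z\'{e}ro. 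Dans le collier, on a seulement $d\left[  N^{t,+}\right]  =\left[  M_{t}\right]  -\left[  M_{t}^{t}\right]  $ (c'est exactement le lemme~\ref{L/ chgt var})~: $M_{t}$ y est homologue \`{a} une copie pouss\'{e}e $M_{t}^{t}$, jamais bord\'{e}e, et le remplissage $\mathcal{X}_{t}$ finalement construit n'est \emph{pas} contenu dans $N$ (seule une des deux moiti\'{e}s du collier l'est, pr\`{e}s de $M$). Pire, votre argument prouverait trop~: il impliquerait que toute courbe compacte trac\'{e}e dans une feuille complexe v\'{e}rifie la condition des moments, ce que d\'{e}ment le cercle central $\gamma=\left\{  \left\vert z_{1}\right\vert =1,\ z_{2}=1/z_{1}\right\}  $ de l'anneau alg\'{e}brique $\left\{  z_{1}z_{2}=1\right\}  $, pour lequel $\int_{\gamma}z_{2}\,dz_{1}=2\pi i\neq0$. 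Le m\'{e}canisme r\'{e}el du ph\'{e}nom\`{e}ne de Hartogs--Severi est \emph{global en} $t$ et non tranche par tranche~: la preuve du papier \'{e}tablit que $I_{T,h}\left(  t\right)  =\left\langle T_{t},h\right\rangle $ est r\'{e}elle analytique sur $\mathbb{R}^{n}$ tout entier, puis conclut \`{a} sa nullit\'{e} parce qu'elle s'annule sur $\mathbb{R}^{n}\setminus\pi_{\mathbb{R}}\left(  M\right)  $, par prolongement analytique~; ce n'est qu'alors que Harvey--Lawson fournit les remplissages $X_{t}$ dans $\mathbb{C}_{t}^{2}\setminus M_{t}$, bien au-del\`{a} de $N$. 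Votre esquisse court-circuite cette \'{e}tape de continuation, qui est pr\'{e}cis\'{e}ment ce que le th\'{e}or\`{e}me a de non trivial.

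Cette analyticit\'{e} est d'ailleurs le second point que vous sous-estimez~: $M$ \'{e}tant ici un ensemble r\'{e}el analytique \'{e}ventuellement singulier, le lemme de d\'{e}formation~\ref{L/ chgt var} ne s'applique plus tel quel, et le papier le remplace par une d\'{e}composition \`{a} la Federer $T=P_{\varepsilon}^{t_{\ast}}+d\Theta_{\varepsilon}^{t_{\ast}}$, o\`{u} $P_{\varepsilon}^{t_{\ast}}$ est une cha\^{\i}ne cellulaire analytique coup\'{e}e transversalement par $\mathbb{C}_{t_{\ast}}^{2}$ et $\Theta_{\varepsilon}^{t_{\ast}}$ un courant de dimension $n+2$ port\'{e} par $N$~; la l\'{e}vi-platitude sert uniquement \`{a} tuer $\left\langle \left(  \Theta_{\varepsilon}^{t_{\ast}}\right)  _{t},dh\right\rangle $, puisque $dh$ est une $\left(  2,0\right)  $-forme et $N_{t}$ une courbe complexe --- c'est le seul endroit o\`{u} votre intuition \og une cha\^{\i}ne dans une feuille annule les $2$-formes holomorphes \fg{} est correcte, et elle s'applique \`{a} $\Theta$, non \`{a} un remplissage de $T_{t}$. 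Il faut aussi, au pr\'{e}alable, la continuit\'{e} de $t\mapsto T_{t}$ (th\'{e}or\`{e}me de Hardt, apr\`{e}s exclusion des sections de dimension $2$ de $M$), que vous ne mentionnez pas, et votre appel \`{a} un th\'{e}or\`{e}me d'effacement \og de type Bishop--Harvey--Shiffman \fg{} pour traverser $\operatorname{Sing}M$ reste un v\oe u l\`{a} o\`{u} le papier proc\`{e}de concr\`{e}tement (description locale de $\mathcal{X}$ par $PQ=0$, r\'{e}gularit\'{e} au bord de Harvey--Lawson appliqu\'{e}e aux $P_{\varepsilon}^{t}$, transversalit\'{e}). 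En revanche, votre unicit\'{e} par tranchage (une $1$-cha\^{\i}ne holomorphe ferm\'{e}e de masse finie de $\mathbb{C}^{2}$ est nulle) et votre argument final de multiplicit\'{e} constante sur $\operatorname{Reg}\mathcal{X}$ quand $M\setminus\operatorname{Sing}M$ est connexe sont corrects dans l'esprit et proches du texte~; mais ils reposent sur une construction d'existence dont l'\'{e}tape cl\'{e}, en l'\'{e}tat, est invalide.
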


\section{Preuves des r\'{e}sultats}

\subsection{Preuve du th\'{e}or\`{e}me~\ref{T/bord CxR A}}

Soit $N$ un anneau r\'{e}el analytique l\'{e}vi-plat de $\mathbb{E}^{n,2}$ et
$M$ une sous-vari\'{e}t\'{e} de $\mathbb{E}^{n,2}$ plong\'{e}e dans $N$ et
satisfaisant aux hypoth\`{e}ses du th\'{e}or\`{e}me~\ref{T/bord CxR A}. On
peut donc s\'{e}lectionner un voisinage ouvert $N^{o}$ de $M$ dans $N$ tel que
$N^{o}\backslash M$ a exactement deux composantes connexes $N^{+}$ et $N^{-}%
$qui sont des vari\'{e}t\'{e}s ouvertes \`{a} bord de bord respectifs
$M^{+}\cup M$ et $M^{-}\cup M$, $M^{+}$ et $M^{-}$ \'{e}tant des
vari\'{e}t\'{e}s de classe $C^{r}$ plong\'{e}es dans $N$ et bien s\^{u}r
connexes, compactes, orient\'{e}es. Afin de fixer les id\'{e}es, on convient
de noter $N^{+}$ la composante dont le bord est $\left[  M\right]  -\left[
M^{+}\right]  $.

On note $\pi_{\mathbb{R}}$ la projection naturelle de $\mathbb{E}^{n,2}$ sur
son premier facteur et $\pi_{\mathbb{C}}$ celle sur son second facteur. Le
lemme \ref{L/ chgt var} ci dessous permet de traiter une section donn\'{e}e de
$M$ comme une section r\'{e}guli\`{e}re.

\begin{lemma}
\label{L/ chgt var}Pour tout $t\in\pi_{\mathbb{R}}\left(  M\right)  $, il
existe dans tout voisinage $U$ de $M$ dans $\mathbb{E}^{n,2}$ une
sous-vari\'{e}t\'{e} ouverte \`{a} bord $N^{t}$ de bord $M^{t}\cup M$ telle
que $N^{t}\subset\overline{N^{+}}\cap U$, $M^{t}$ est une sous-vari\'{e}t\'{e}
connexe compacte orient\'{e}e de classe $C^{\omega}$ plong\'{e}e dans $N$,
$d\left[  N^{t}\right]  =\left[  M\right]  -\left[  M^{t}\right]  $ et
$t\in\mathcal{T}\left(  M^{t}\right)  $.
\end{lemma}

\begin{proof}
Fixons un param\`{e}tre $t$ dans $\pi_{\mathbb{R}}\left(  M\right)  $ et
supposons d'abord que $M$ est une sous-vari\'{e}t\'{e} de dimension $n+1$
plong\'{e}e dans $N$ et que $N$ est une sous-vari\'{e}t\'{e} d'un ouvert de
$\mathbb{E}^{n,2}$. Fixons alors un voisinage $G$ de $M$ dans $\mathbb{E}%
^{n,2}$ tel que la fonction $\rho$ de $N\cap G$ dans $\mathbb{R}$ qui \`{a}
$p\in N$ associe $\pm dist_{\mathbb{E}^{n,2}}\left(  p,M\right)  $ si
$p\in\overline{N^{\pm}}\cap G$ est de classe $C^{1}$ et diff\'{e}rentielle ne
s'annulant pas. Le th\'{e}or\`{e}me de Sard appliqu\'{e} \`{a} $\rho\left(
.,t\right)  $ fournit $\lambda\in\mathbb{R}_{+}^{\ast}$ arbitrairement petit
tel que $\left\{  \rho=\lambda\right\}  $ convient a les propri\'{e}t\'{e}s
requises hormis peut \^{e}tre l'analyticit\'{e} r\'{e}elle. Ce manque
\'{e}ventuel est combl\'{e} en consid\'{e}rant une approximation $C^{1}$ de
$\left\{  \rho=\lambda\right\}  $ par une vari\'{e}t\'{e} r\'{e}elle
analytique $M^{\prime}$ suffisamment proche.

Supposons maintenant que $N$ \'{e}tant seulement un anneau r\'{e}el analytique
l\'{e}vi-plat, $M$ est une hypersurface plong\'{e}e dans $N$. On consid\`{e}re
un rev\^{e}tement de $N$ par une vari\'{e}t\'{e} lisse $\widetilde{N}$~; on
note $\varphi$ la projection naturelle de $\widetilde{N}$ sur $N$.
$\varphi^{-1}\left(  M\right)  $ est alors une hypersurface lisse de
$\widetilde{N}$. Par ailleurs, les sections $N_{t}$ de $N$ \'{e}tant par
hypoth\`{e}se des courbes complexes, $N$ et $\mathbb{C}_{t}^{2}$ se coupent
transversalement et $\varphi^{-1}\left(  N_{t}\right)  $ est donc lisse. Dans
des voisinages arbitrairement petits de $\varphi^{-1}\left(  M\right)  $ dans
$\widetilde{N}$, une approximation r\'{e}elle analytique g\'{e}n\'{e}rique
$\widehat{M}$ de $\varphi^{-1}\left(  M\right)  $ coupe donc $\varphi
^{-1}\left(  N_{t}\right)  $ transversalement et a la propri\'{e}t\'{e} que
$\left[  \varphi^{-1}\left(  M\right)  \right]  -[\,\widehat{M}\,]$ est le
bord au sens de la formule de Stokes d'un domaine lisse et born\'{e} de
$\widetilde{N}$, ce qui entra\^{\i}ne que $\left[  M\right]  -[\varphi
(\,\widehat{M}\,)]$ est le bord au sens de la formule de Stokes d'un ouvert
born\'{e} de $N$. $\varphi$ \'{e}tant un diff\'{e}omorphisme local, on en
d\'{e}duit que $M^{t}=\varphi(\,\widehat{M}\,)$ coupe transversalement
$\mathbb{C}_{t}^{2}$.\smallskip
\end{proof}

\begin{corollary}
\label{L/ ExistSect}Pour tout $t\in\pi_{\mathbb{R}}\left(  M\right)  $, les
intersections de $\mathbb{C}_{t}^{2}$ avec $M^{t}$ et $N^{t}$ sont
transverses, ce qui permet de d\'{e}finir $\left[  M\right]  _{t}$ par la
formule $\left[  M\right]  _{t}=\left[  M^{t}\right]  _{t}+d\left[
N^{t}\right]  _{t}$ o\`{u} $\left[  M^{t}\right]  _{t}$ et $\left[
N^{t}\right]  _{t}$ sont d\'{e}finis de mani\`{e}re usuelle.
\end{corollary}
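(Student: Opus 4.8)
The plan is to settle the two transversality assertions separately and then to read off the displayed formula by slicing the boundary relation furnished by Lemma~\ref{L/ chgt var}.

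The transversality of $\mathbb{C}_t^2$ with $M^t$ is immediate: it is exactly the property $t\in\mathcal{T}(M^t)$ guaranteed by Lemma~\ref{L/ chgt var}, so nothing more is needed there. For $N^t$ I would argue that, by the very definition of a levi-flat annulus, the sections $N\cap\mathbb{C}_s^2$ are complex curves for every $s$, so $N$ is cut transversally by each $\mathbb{C}_s^2$ (the intersection having the expected real dimension $(n+2)+4-(n+4)=2$), as already recorded when the notion of levi-flat annulus was introduced. Since, away from its boundary, $N^t$ is an open subset of $N$ of the same dimension $n+2$, it has the same tangent spaces as $N$ at its interior points and therefore inherits transversality there; along the boundary piece $M^t$ transversality is once more $t\in\mathcal{T}(M^t)$. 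Consequently $N^t\cap\mathbb{C}_t^2$ is a genuine real $2$-dimensional submanifold, a section of the complex-curve family $N_t$, and $M^t\cap\mathbb{C}_t^2$ a genuine real curve, which is precisely what makes $[M^t]_t$ and $[N^t]_t$ the ordinary integration currents carried by these transverse intersections.

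To obtain the formula I would slice the current identity $d[N^t]=[M]-[M^t]$ coming from Lemma~\ref{L/ chgt var}. Both $[M^t]$ and $[N^t]$ are integration currents on submanifolds, hence locally flat, so their slices at a parameter transverse to them exist and agree with the naive intersection; moreover slicing commutes with the boundary through the identity $(d\mu)_t=(-1)^n\,d(\mu_t)$ recalled after~(\ref{F/ courant integ}). Applying this to $\mu=[N^t]$ at the parameter $t$ gives $[M]_t-[M^t]_t=(d[N^t])_t=(-1)^n\,d\!\left([N^t]_t\right)$, which, up to the orientation convention fixed for the slices, is the announced equality $[M]_t=[M^t]_t+d[N^t]_t$. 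The point of this identity is that its right-hand side is defined for \emph{every} $t\in\pi_{\mathbb{R}}(M)$, even for a parameter for which $M$ itself is tangent to $\mathbb{C}_t^2$ and for which the Federer slice $[M]_t$ of~(\ref{F/ courant integ}) is not a priori available, so it legitimately extends the definition of $[M]_t$ to all such $t$. For almost every $t$ the parameter is regular for $M$ as well, and then the same slicing shows that this prescription reproduces the intrinsic Federer slice; subtracting the relations attached to two auxiliary choices $(M^t,N^t)$ makes the term $[M]$ cancel and shows, by the identical argument, that the value of the right-hand side is independent of the choice, so that $[M]_t$ is unambiguously defined.

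The step I expect to demand the most care is the boundary bookkeeping in this slicing, together with the sign. One must make sure that $[N^t]_t$, taken as the integration current over the transverse section of the \emph{bordered} manifold $N^t$, really has for boundary the slice $([M]-[M^t])_t$ in the sense of currents, including along the component $M$ where $\mathbb{C}_t^2$ may be tangent; this is where I would invoke the flatness of the currents and the fact that the non-transverse locus meets $\mathbb{C}_t^2$ in an $\mathcal{H}^1$-negligible set to control the slice and to fix the orientation consistently.
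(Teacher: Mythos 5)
Your proof is correct and follows essentially the same route as the paper: transversality of $\mathbb{C}_t^2$ with $M^t$ is read off from $t\in\mathcal{T}\left(M^t\right)$ in Lemma~\ref{L/ chgt var}, transversality with $N^t$ from the levi-flat hypothesis that the sections $N_\tau$ are complex curves, and the formula is obtained by slicing the relation $\left[M\right]-\left[M^t\right]=d\left[N^t\right]$, the identity serving as a derived fact for $t\in\mathcal{T}\left(M\right)$ and as the definition of $\left[M\right]_t$ otherwise. Your explicit verification that the prescription does not depend on the choice of $\left(M^t,N^t\right)$ (subtracting the two boundary relations and slicing) spells out what the paper compresses into the single word ``coh\'erente'', and your tracking of the $(-1)^n$ slicing sign is a harmless refinement of the paper's silent orientation convention.
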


\begin{proof}
Lorsque $t\in\mathcal{T}\left(  M\right)  $, $M$ et $\mathbb{C}_{t}^{2}$ se
coupent transversalement de sorte que $\left[  M\right]  _{t}$ est bien
d\'{e}fini et vaut $\left[  M_{t}\right]  $. Lorsque $t\in\pi_{\mathbb{R}%
}\left(  M\right)  $ est quelconque, $t$ est par construction un param\`{e}tre
r\'{e}gulier de $M^{t}$ de sorte que $\left[  M^{t}\right]  _{t}$ est bien
d\'{e}fini et vaut $\left[  \left(  M^{t}\right)  _{t}\right]  $. Par
ailleurs, les sections $N_{\tau}$ \'{e}tant par hypoth\`{e}se des courbes
complexes, $N$ et $\mathbb{C}_{t}^{2}$ se coupent transversalement, ce qui
implique que $\left[  N\right]  _{t}$ a un sens et co\"{\i}ncide avec $\left[
N_{t}\right]  $. Lorsque $t\in\mathcal{T}\left(  M\right)  $, la formule
$\left[  M\right]  _{t}=\left[  M^{t}\right]  _{t}+d\left[  N^{t}\right]
_{t}$ d\'{e}coule donc directement de l'\'{e}galit\'{e} $\left[  M\right]
-\left[  M^{t}\right]  =d\left[  N^{t}\right]  $. Lorsque $t\in\pi
_{\mathbb{R}}\left(  M\right)  \backslash\mathcal{T}\left(  M\right)  $, elle
permet de d\'{e}finir $\left[  M\right]  _{t}$ de fa\c{c}on coh\'{e}rente.
\end{proof}

Consid\'{e}rons comme dans \cite{HeG-MiV2002} la forme de Poincar\'{e}~; pour
$t$ fix\'{e} dans $\mathbb{R}^{n}$, c'est la forme diff\'{e}rentielle
r\'{e}elle $P_{t}$ de degr\'{e} $n-1$ d\'{e}finie sur $\mathbb{R}%
^{n}\backslash\left\{  t\right\}  $ par la formule%
\[
P_{t}=\frac{1}{c_{n}}\sum\limits_{1\leqslant j\leqslant n}\left(  -1\right)
^{j-1}\frac{\theta_{j}-t_{j}}{\left\vert \theta-t\right\vert ^{n}}%
d\theta^{\widehat{j}}\
\]
D'o\`{u} $d\theta^{\widehat{j}}=\underset{\nu\neq j}{\wedge}d\theta_{\nu}~$;
$\int_{\left\vert \theta-t\right\vert =1}P_{t}\left(  \theta\right)  =1$,
$P_{t}$ est ferm\'{e}e sur $\mathbb{R}^{n}\backslash\left\{  t\right\}  $ et
$dP_{t}=\delta_{t}d\theta_{1}\wedge...\wedge d\theta_{n}$ o\`{u} $\delta_{t}$
est la mesure de Dirac en $t$. La singularit\'{e} de $P_{t}$ \'{e}tant
int\'{e}grable sur $M$, on peut poser, $h$ \'{e}tant une $\left(  1,0\right)
$-forme diff\'{e}rentielle fix\'{e}e de classe $CR^{\omega}$ sur
$\mathbb{E}^{n,2}$,%
\begin{equation}
I_{M,h}\left(  t\right)  =\int\limits_{\left(  \theta,z\right)  \in M}\partial
h\left(  \theta,z\right)  \wedge P_{t}\left(  \theta\right)
\label{T/bord CxR mom2}%
\end{equation}

\begin{lemma}
\label{L/reecrire mom}Pour tout $t\in\mathcal{T}\left(  M\right)  $, on a%
\begin{equation}
I_{M,h}\left(  t\right)  =\int_{M_{t}}h~. \label{T/bord CxR mom2reg}%
\end{equation}

\end{lemma}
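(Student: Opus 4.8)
The plan is to read $I_{M,h}(t)$ as a slicing integral governed by the three properties of the Poincaré form recalled above: $P_t$ is closed on $\mathbb{R}^n\setminus\{t\}$, $dP_t=\delta_t\,d\theta_1\wedge\cdots\wedge d\theta_n$, and $\int_{|\theta-t|=1}P_t=1$. Fix $t\in\mathcal{T}(M)$. By transversality $\pi_{\mathbb{R}}:M\to\mathbb{R}^n$ is a submersion near $M_t$, so $M_t$ is a smooth closed curve and the singularity of $\pi_{\mathbb{R}}^{*}P_t$ along $M_t$ is integrable on $M$, whence $I_{M,h}(t)$ is well defined. I would set $\Psi=h\wedge\pi_{\mathbb{R}}^{*}P_t$, an $n$-form on $M$ smooth off $M_t$. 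Removing a tube $T_\varepsilon=\{\,p\in M:\ |\pi_{\mathbb{R}}(p)-t|<\varepsilon\,\}$ and applying Stokes on $M\setminus T_\varepsilon$, where $d(\pi_{\mathbb{R}}^{*}P_t)=0$, gives
\[
\int_{M\setminus T_\varepsilon}dh\wedge\pi_{\mathbb{R}}^{*}P_t=-\int_{\partial T_\varepsilon}h\wedge\pi_{\mathbb{R}}^{*}P_t ,
\]
the boundary $\partial T_\varepsilon$ fibering over $M_t$ with $(n-1)$-spheres as fibres. Letting $\varepsilon\downarrow0$ and using the normalisation $\int_{S^{n-1}}P_t=1$ together with the continuity of $h$, the right-hand side tends to $\pm\int_{M_t}h$; equivalently this is the cycle identity $\langle[M],d\Psi\rangle=0$ combined with $dP_t=\delta_t\,d\theta$. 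Thus $\int_{M}dh\wedge\pi_{\mathbb{R}}^{*}P_t=\pm\int_{M_t}h$, the sign being fixed once and for all by the chosen orientations.

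It remains to replace $dh$ by $\partial h$. Since $h$ is of class $CR^\omega$ it is holomorphic in the fibre variable, so $\bar\partial h=0$ and $\partial h$ is the $(2,0)$-form $(\partial_{z_1}h_2-\partial_{z_2}h_1)\,dz_1\wedge dz_2$; moreover $\pi_{\mathbb{R}}^{*}P_t$ contains only the $dt$-differentials, so it is annihilated by $\partial$ and $\bar\partial$ in the fibre directions. Writing $d=\partial+\bar\partial+d_t$, where $d_t$ is the exterior derivative in the parameter, one gets off $M_t$ the identity $dh\wedge\pi_{\mathbb{R}}^{*}P_t=\partial h\wedge\pi_{\mathbb{R}}^{*}P_t+d_t h\wedge\pi_{\mathbb{R}}^{*}P_t$, so the lemma is equivalent to the vanishing of the residual term $E(t)=\int_M d_t h\wedge\pi_{\mathbb{R}}^{*}P_t$.

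Evaluating $E(t)$ by Fubini along the transverse fibration, only the component of $\pi_{\mathbb{R}}^{*}P_t$ carrying $n-1$ parameter differentials survives against the single extra $dt_k$ produced by $d_t h$, and one is left with
\[
E(t)=\frac{1}{c_n}\int_{\mathbb{R}^n}\frac{\sum_{k}(\theta_k-t_k)\,b_k(\theta)}{|\theta-t|^{n}}\,d\theta ,\qquad b_k(\theta)=\int_{M_\theta}\partial_{\theta_k}h ,
\]
where $\partial_{\theta_k}h$ is again holomorphic in the fibre, hence a closed holomorphic differential on the leaf $N_\theta$, and $b_k(\theta)$ is its period over the slice $M_\theta$. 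Since $\tfrac{1}{c_n}(\theta_k-t_k)|\theta-t|^{-n}$ is, up to sign, the gradient of the Newtonian kernel, an integration by parts yields $E=-\,\mathrm{Newton}*\operatorname{div}b$, so the heart of the matter is to prove $\operatorname{div}b=\sum_k\partial_{\theta_k}b_k\equiv0$. This is the step I expect to be the real obstacle: the slices $M_\theta$ need not bound in their leaves (by Corollary~\ref{L/ ExistSect} they are only homologous to the auxiliary sections $M^{\theta}_{\theta}$), so $b_k$ does not vanish leafwise and the cancellation must be global. I would establish $\operatorname{div}b=0$ from the complex-leaf structure of the levi-flat $N$: the holonomy of the foliation identifies the homology classes of nearby slices and transports the holomorphic differentials $\partial_{\theta_k}h$ compatibly, which forces the period vector $b$ to be closed and the period function $\theta\mapsto\int_{M_\theta}h$ to be harmonic in the parameter; then $\operatorname{div}b=0$, and the Newtonian potential of a vanishing divergence reduces to a boundary term annihilated by the normalisation of $P_t$. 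Once $E\equiv0$ is in hand, combining it with the identity for $dh$ gives $I_{M,h}(t)=\int_{M_t}h$, which is \eqref{T/bord CxR mom2reg}.
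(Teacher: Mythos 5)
Your first paragraph is, in substance, the paper's proof: excise the tube $M\cap\left[B_{\mathbb{R}^{n}}\left(t,\varepsilon\right)\times\mathbb{C}^{2}\right]$, apply Stokes on the complement, let the interior term converge thanks to the integrability of the singularity of $P_{t}$ on $M$, and compute the boundary term by Fubini along $\sigma_{t}^{\varepsilon}=\cup_{\theta\in S\left(t,\varepsilon\right)}M_{\theta}$, using the normalisation $\int_{\left\vert\theta-t\right\vert=\varepsilon}P_{t}=1$ and the continuity of $\theta\mapsto\int_{M_{\theta}}h$ at the regular parameter $t$. Where you go astray is in the residual term $E\left(t\right)=\int_{M}d_{t}h\wedge P_{t}$: it exists only because you read $\partial h$ in (\ref{T/bord CxR mom2}) as the fibrewise $\left(2,0\right)$-part $\partial_{z}h$ alone. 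That is not the paper's convention. Since $h$ is $CR^{\omega}$, $\overline{\partial}h=0$ and $\partial h$ is the full differential $dh=d_{t}h+\partial_{z}h$; this is exactly why the paper can assert $\partial h\wedge P_{t}=d\left(h\wedge P_{t}\right)$ near $\overline{M_{t}^{\varepsilon}}$ (with your reading that identity would be false pointwise), and it is consistent both with the proof of the lemme~\ref{L/ mom = 0}, where only the restriction of $\partial h$ to a slice --- which is the holomorphic $\left(2,0\right)$-part --- is used against $\left[G^{t}\right]_{s}$, and with the proof of the th\'{e}or\`{e}me~\ref{T/bord CxR B}, where $\left\langle T,dh\wedge P_{t}\right\rangle$ appears outright. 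So there is no $E\left(t\right)$ to kill, your equivalence ``lemma $\Longleftrightarrow E\equiv0$'' is based on a misreading, and your first paragraph already proves the statement. (For the paper's actual uses of the lemma, e.g.\ the moments $S_{t,k}$ in the lemme~\ref{L/ reg de Xtilde}, $h$ is $t$-independent and the two readings coincide anyway.)

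Even granting your reading, the last two paragraphs would not stand as a proof. The pivotal claim $\operatorname{div}b=0$ is never established: the route you indicate implicitly uses $b_{k}\left(\theta\right)=\partial_{\theta_{k}}\int_{M_{\theta}}h$, which is false because the slice moves with $\theta$ --- differentiating the period produces, besides $b_{k}$, the term $\int_{M_{\theta}}\iota_{V_{k}}\partial_{z}h$ where $V_{k}$ realises the deformation of the slices, and at this stage of the paper nothing is known about $M_{\theta}$ bounding, neither in $\mathbb{C}_{\theta}^{2}$ (that is the theorem being proved) nor in its leaf (the corollaire~\ref{L/ ExistSect} only provides homologous auxiliary sections, as you yourself note), so these leafwise periods cannot be discarded. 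Likewise, holonomy only yields local constancy of the class of $\left[M\right]_{\theta}$ across regular parameters; it transports nothing about the $\theta$-dependence of the coefficients of $h$, and the harmonicity of $\theta\mapsto\int_{M_{\theta}}h$ is asserted rather than derived. In short: your proof is complete exactly where it coincides with the paper's, and incomplete at the step you yourself flag as the obstacle --- a step which, under the intended meaning of $\partial h$, does not arise.
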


Soit $t\in\mathcal{T}\left(  M\right)  $. Puisque $\mathbb{C}_{t}^{2}$ coupe
transversalement $M$ le long de $M_{t}$, pour $\varepsilon\in\mathbb{R}%
_{+}^{\ast}$ suffisamment petit $M\cap\left[  B_{\mathbb{R}^{n}}\left(
t,\varepsilon\right)  \times\mathbb{C}^{2}\right]  $ est une vari\'{e}t\'{e}
\`{a} bord de bord $\sigma_{t}^{\varepsilon}=\underset{\theta\in S\left(
t,\varepsilon\right)  }{\cup}M_{\theta}$ et $M_{t}^{\varepsilon}%
=M\backslash\left[  B_{\mathbb{R}^{n}}\left(  t,\varepsilon\right)
\times\mathbb{C}^{2}\right]  $ est un ensemble analytique orient\'{e}
bord\'{e} (topologiquement) par $\sigma_{t}^{\varepsilon}$. Comme $\partial
h\wedge P_{t}=d\left(  h\wedge P_{t}\right)  $ au voisinage de $\overline
{M_{t}^{\varepsilon}}$, la formule de Stockes livre%
\begin{equation}
\int\nolimits_{M_{t}^{\varepsilon}}\partial h\wedge P_{t}=\int
\nolimits_{\sigma_{t}^{\varepsilon}}h\wedge P_{t}. \label{T/bord CxR mom2'}%
\end{equation}
La singularit\'{e} de $P_{t}$ \'{e}tant int\'{e}grable sur $M$, le membre de
gauche de (\ref{T/bord CxR mom2'}) tend vers l'int\'{e}grale de $\partial
h\wedge P_{t}$ sur $M$ lorsque $\varepsilon$ tend vers $0$. Par ailleurs, la
forme particuli\`{e}re de $h\wedge P_{t}$ permet d'utiliser le
th\'{e}or\`{e}me de Fubini et d'obtenir%
\[
\int\nolimits_{\sigma_{t}^{\varepsilon}}h\wedge P_{t}=\int\nolimits_{\theta\in
S\left(  0,1\right)  }I_{M,h}\left(  t+\varepsilon\theta\right)  P_{t}\left(
\theta\right)  .
\]
En passant \`{a} limite lorsque $\varepsilon$ tend vers $0$, il appara\^{\i}t
que $I_{M,h}\left(  t\right)  $ tend vers le membre de droite de
(\ref{T/bord CxR mom2reg}). Le lemme est prouv\'{e}.

\begin{lemma}
\label{L/ mom = 0}$I_{M,h}$ est la fonction nulle et pour tout $t\in
\mathbb{R}^{n}$, $0=I_{M,h}\left(  t\right)  =\left\langle \left[  M\right]
_{t},h\right\rangle $.
\end{lemma}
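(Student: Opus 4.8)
The plan is to reduce, via Lemma~\ref{L/reecrire mom}, to proving that the sectional moments $\int_{M_t}h$ vanish for every regular parameter, and then to extend this vanishing to all $t$ by continuity. For $t\in\mathcal{T}(M)$, Lemma~\ref{L/reecrire mom} gives $I_{M,h}(t)=\int_{M_t}h$, while for $t\notin\pi_{\mathbb{R}}(M)$ the section $M_t$ is empty, so $I_{M,h}(t)=0$. One can also see the latter directly: when $P_t$ is smooth near $M$ one has $\partial h\wedge P_t=d(h\wedge P_t)$ near $M$, exactly as in the proof of Lemma~\ref{L/reecrire mom}, whence $I_{M,h}(t)=\langle d[M],h\wedge P_t\rangle=0$ because $M$ is a compact manifold without boundary.

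The engine of the argument is restriction to a leaf. For $t\in\mathcal{T}(M)$ the curve $M_t$ lies in the section $N_t$, which is a Riemann surface. Since $h$ is of class $CR^{\omega}$, its restriction $h|_{N_t}$ is a holomorphic $(1,0)$-form on $N_t$, hence closed because $N_t$ has complex dimension $1$. As $M$ is compact and embedded in the annulus $N$, the curve $M_t$ bounds in $N_t$ a relatively compact domain $D_t$, the section of the relatively compact region of $N$ bounded by $M$, so Stokes yields $\int_{M_t}h=\int_{D_t}d(h|_{N_t})=0$. Thus $I_{M,h}=0$ on $\mathcal{T}(M)$ and outside $\pi_{\mathbb{R}}(M)$; since the complementary set $\pi_{\mathbb{R}}(M)\setminus\mathcal{T}(M)$ has Lebesgue measure zero, continuity of $I_{M,h}$ forces $I_{M,h}\equiv 0$. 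Continuity itself follows from the uniform integrability of the Poincar\'{e} kernel over $M$ and from the mean-value identity already obtained in the proof of Lemma~\ref{L/reecrire mom}.

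It remains to identify $I_{M,h}(t)$ with $\langle[M]_t,h\rangle$ for every $t$. For $t\in\mathcal{T}(M)$ this is Lemma~\ref{L/reecrire mom}. For the remaining parameters I would invoke Corollary~\ref{L/ ExistSect}, which writes $\langle[M]_t,h\rangle=\langle[M^t]_t,h\rangle+\langle d[N^t]_t,h\rangle$. The first term equals $\int_{(M^t)_t}h$ and vanishes by the same leafwise Stokes argument applied to $M^t$, which is embedded in $N$ and for which $t$ is a regular parameter by Lemma~\ref{L/ chgt var}. The second term equals, up to sign, $\int_{(N^t)_t}d(h|_{N_t})$, which is $0$ because $(N^t)_t$ is a two-dimensional piece of the leaf $N_t$, where $h$ restricts to a closed form. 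Hence $\langle[M]_t,h\rangle=0=I_{M,h}(t)$ everywhere.

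The main obstacle is the leafwise Stokes step, namely justifying that $M_t$, and likewise $(M^t)_t$, actually bounds a relatively compact domain inside the Riemann surface $N_t$: this is a null-homology statement which is false for a general cycle, as a core curve in an annular leaf shows, and it is precisely here that the compactness of $M$ and the fact that $[M]$ is a cycle in the annulus $N$ must be used, forcing $M$ to bound in $N$ a relatively compact region whose sections furnish the $D_t$. The regularity points, that is, uniform integrability of the Poincar\'{e} kernel and continuity of $I_{M,h}$, are more technical but standard for transforms of this Cauchy--solid-angle type.
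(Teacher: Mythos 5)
Your reduction scheme is reasonable, but the engine of your argument --- the leafwise Stokes step --- is not just a gap to be filled, it is false, and it is false precisely at the point where the actual content of the lemma lives. You assert that for $t\in\mathcal{T}\left(  M\right)  $ the curve $M_{t}$ bounds a relatively compact domain $D_{t}$ inside the leaf $N_{t}$, namely ``the section of the relatively compact region of $N$ bounded by $M$''. No such region exists in general: $N$ is only an \emph{anneau}, a thin Levi-flat collar around $M$, and $M$ plays the role of the core of that collar. Your own counterexample (the core curve of an annular leaf) is exactly the generic situation here, and neither compactness of $M$ nor the cycle property rescues it: the core circle of an annulus is a compact cycle that does not bound in the annulus. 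In the paper's setting $N^{o}\backslash M$ consists of the two collars $N^{\pm}$ with boundaries $M\cup M^{\pm}$; the filling $\mathcal{X}$, which contains $N^{+}$ and extends far beyond $N$, is precisely what has to be constructed --- if $M_{t}$ bounded inside $N_{t}$, Theorem~\ref{T/bord CxR A} would be trivial. So the vanishing $\int_{M_{t}}h=0$, which \emph{is} the moment condition the lemma encodes, is never established: your argument begs the question.

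The paper's mechanism is genuinely different and is the Hartogs--Severi phenomenon itself: moment conditions are validated by analytic continuation in the parameter, not by homological triviality in the leaf. Using Lemma~\ref{L/ chgt var}, one replaces $M$ by a real-analytic deformation $M^{t}\subset N$ transverse to $\mathbb{C}_{t}^{2}$; Stokes inside $N$ gives $I_{M,h}\left(  s\right)  =I_{M^{t},h}\left(  s\right)  $ for all $s$, the interpolating piece of $N$ contributing $\left\langle \left[  G^{t}\right]  _{s},\partial h\right\rangle =0$ because $\partial h$ is a $\left(  2,0\right)  $-form and $N_{s}$ a complex curve --- this is the one ingredient you do have, in your treatment of $\left\langle d\left[  N^{t}\right]  _{t},h\right\rangle $. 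Then Lemma~\ref{L/reecrire mom} applied to $M^{t}$ shows $I_{M,h}$ is real analytic near each $t$, hence $I_{M,h}\in C^{\omega}\left(  \mathbb{R}^{n}\right)  $; since it vanishes on the nonempty open set $\mathbb{R}^{n}\backslash\pi_{\mathbb{R}}\left(  M\right)  $ (where $M_{t}=\varnothing$ and $\partial h\wedge P_{t}=d\left(  hP_{t}\right)  $ near $M$, as you correctly note), analytic continuation forces $I_{M,h}\equiv0$. Your fallback ``continuity plus density'' cannot substitute for this: continuity only propagates a vanishing you would already need on $\mathcal{T}\left(  M\right)  $, which is exactly what the leafwise Stokes step failed to deliver, whereas analyticity in $t$ propagates the vanishing from \emph{outside} $\pi_{\mathbb{R}}\left(  M\right)  $ into it. Your bookkeeping at non-regular parameters via Corollary~\ref{L/ ExistSect} is fine in itself, but it inherits the same unproven vanishing of $\int_{\left(  M^{t}\right)  _{t}}h$.
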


\begin{proof}
Soient $t$ et $s$ deux param\`{e}tres fix\'{e}s dans $\pi_{\mathbb{R}}\left(
M\right)  $. On se donne $M^{t}$ et $G^{t}$ comme dans le
lemme~\ref{L/ chgt var}. Avec la formule de Stokes on obtient%
\[
\int_{M}\partial h\wedge P_{s}=\int_{M^{t}}\partial h\wedge P_{s}+\int_{G^{t}%
}\partial h\wedge\delta_{s}dx_{1}\wedge...\wedge dx_{n}=\int_{M^{t}}\partial
h\wedge P_{s}+\left\langle \left[  G^{t}\right]  _{s},\partial h\right\rangle
.
\]
Or le support de $\left[  G^{t}\right]  _{s}$ est contenu dans $N_{s}$ qui est
une courbe complexe et $\partial h$ est une $\left(  2,0\right)  $-forme
diff\'{e}rentielle holomorphe. Par cons\'{e}quent $\left\langle \left[
G^{t}\right]  _{s},\partial h\right\rangle =0$ et il s'av\`{e}re que
$I_{M,h}\left(  s\right)  =I_{M^{t},h}\left(  s\right)  $. Puisque
$t\in\mathcal{T}\left(  M^{t}\right)  $ la formule (\ref{T/bord CxR mom2reg})
s'applique \`{a} $M^{t}$ au lieu de $M$ pour $s$ voisin de $t$, ce qui donne
$I_{M,h}\left(  s\right)  =I_{M^{t},h}\left(  s\right)  =\int_{\left(
M^{t}\right)  _{s}}h$. $M^{t}$ \'{e}tant r\'{e}el analytique et coup\'{e}
transversalement par $\mathbb{C}_{s}^{2}$ lorsque $s$ est suffisamment voisin
de $t$, on en d\'{e}duit que $I_{M,h}$ est r\'{e}elle analytique au voisinage
de $t$. Il s'ensuit que $I_{M,h}\in C^{\omega}\left(  \mathbb{R}^{n}\right)  $
puis que $I_{M,h}$ est la fonction nulle car si $t\in\mathbb{R}^{n}%
\backslash\pi_{\mathbb{R}}\left(  M\right)  $, $M_{t}=\emptyset$ et la
relation $\partial h\wedge P_{t}=d\left(  hP_{t}\right)  $ est vraie dans un
voisinage de $M$. La formule annonc\'{e}e dans le lemme d\'{e}coule
directement des relations $\left[  M^{t}\right]  _{t}=\left[  M\right]
_{t}+d\left[  G^{t}\right]  _{t}$ et $I_{M,h}\left(  t\right)  =I_{M^{t}%
,h}\left(  t\right)  $.\medskip
\end{proof}

Lorsque $t\in\mathcal{T}\left(  M\right)  $, $M$ est coup\'{e}e
transversalement par $\mathbb{C}_{t}^{2}$ et assimilant avec un l\'{e}ger abus
de langage $\left[  M\right]  _{t}=\left[  M_{t}\right]  $ \`{a} un courant de
$\mathbb{C}_{t}^{2}$, on obtient un MC-cycle au sens de \cite{HaR-LaB1975}.
Puisque $0=I_{M,h}\left(  t\right)  =\int_{M_{t}}h$, ce cycle v\'{e}rifie la
condition des moments. $\mathbb{C}_{t}^{2}\backslash\operatorname*{supp}%
\left[  M\right]  _{t}$ contient par cons\'{e}quent
d'apr\`{e}s~\cite{HaR-LaB1975} une $1$-cha\^{\i}ne holomorphe $X_{t}$ de masse
finie v\'{e}rifiant $dX_{t}=\left[  M\right]  _{t}$. On sait en outre
d'apr\`{e}s~\cite[p. 411]{LaM1995}\cite[prop. 4.4]{DiT1998a} que si
$B_{\mathbb{E}^{n,2}}\left(  0,A\right)  $ contient $M$, la masse de $X_{t}$
est au plus $2A\mathcal{H}^{1}\left(  M_{t}\right)  $~; elle est donc
localement major\'{e}e par rapport \`{a} $t$ dans $\mathcal{T}\left(
M\right)  $.

Si $t\in\pi_{\mathbb{R}}\left(  M\right)  \backslash\mathcal{T}\left(
M\right)  $, les m\^{e}mes arguments donnent l'existence dans $\mathbb{C}%
_{t}^{2}\backslash\operatorname*{supp}\left[  M^{t}\right]  _{t}$ d'une
$1$-cha\^{\i}ne holomorphe $X_{t}^{t}$ de masse finie v\'{e}rifiant
$dX_{t}^{t}=\left[  M^{t}\right]  _{t}$~; compte tenu du
lemme~\ref{L/ ExistSect}, en posant $X_{t}=X_{t}^{t}+\left[  N^{t}\right]
_{t}$, on obtient dans $\mathbb{C}_{t}^{2}\backslash\operatorname*{supp}%
\left[  M\right]  _{t}$ une $1$-cha\^{\i}ne holomorphe de masse finie $X_{t}$
telle que $dX_{t}=\left[  M\right]  _{t}$.

Si $t\in\mathbb{R}^{n}$ on note $\mathcal{X}_{t}$ le support de la
1-cha\^{\i}ne $X_{t}$ et on pose
\begin{equation}
\widetilde{\mathcal{X}}=\underset{t\in\mathcal{T}\left(  M\right)  }{\cup
}\mathcal{X}_{t}~,\text{~}~\mathcal{X}=\underset{t\in\mathbb{R}^{n}}{\cup
}\mathcal{X}_{t}\text{~}. \label{F/ def X}%
\end{equation}
V\'{e}rifions tout d'abord l'absence de pathologie~:

\begin{lemma}
\label{L/ englob demianneau}$N^{+}\subset\mathcal{X}$.
\end{lemma}

\begin{proof}
Soit $t\in\pi_{\mathbb{R}}\left(  M\right)  $. Supposons tout d'abord que
$t\in\mathcal{T}\left(  M\right)  $ et fixons une composante connexe
$\gamma_{t}$ de $M_{t}$. D'apr\`{e}s~\cite[th. 3.3]{HaR1977}, il existe dans
$\gamma_{t}$ un compact $A_{t}$ dont la mesure 1-dimensionnelle de Hausdorff
est nulle et tel que $\mathcal{X}_{t}\cup\gamma_{t}$ est au voisinage de
chacun des points de $\gamma_{t}\backslash A_{t}$ une sous-vari\'{e}t\'{e} de
classe $C^{r}$ de $\mathbb{C}_{t}^{2}$. Soit $p\in\gamma_{t}\backslash A_{t}$.
Puisque $N$ est un anneau l\'{e}vi-plat, $N_{t}$ est au voisinage de $p$ une
r\'{e}union de courbes complexes lisses. $M$ \'{e}tant plong\'{e}e dans $N$ et
$t$ \'{e}tant r\'{e}gulier pour $M$, $M_{t}$ est, au voisinage de $p$, une
sous-variet\'{e} de l'une de ces courbes complexes. Notons la $%
%TCIMACRO{\QTR{EuScript}{C}}%
%BeginExpansion
C%
%EndExpansion
_{p}$ et posons $%
%TCIMACRO{\QTR{EuScript}{C}}%
%BeginExpansion
C%
%EndExpansion
_{p}^{+}=%
%TCIMACRO{\QTR{EuScript}{C}}%
%BeginExpansion
C%
%EndExpansion
_{p}\cap N^{+}$. Alors, dans un voisinage ouvert relativement compact
suffisamment petit de $p$ dans $N$, les courbes complexes $%
%TCIMACRO{\QTR{EuScript}{C}}%
%BeginExpansion
C%
%EndExpansion
_{p}^{+}$ et $\mathcal{X}_{t}$ co\"{\i}ncident car bord\'{e}es au sens des
courants par la m\^{e}me courbe r\'{e}elle $\gamma_{t}$. Puisque
$\mathcal{X}_{t}$ est bord\'{e}e par $M_{t}$, on en d\'{e}duit par un principe
d'unicit\'{e} de prolongement analytique que $\mathcal{X}_{t}$ contient
$N_{t}^{+}$.

Supposons maintenant que $t\notin\mathcal{T}\left(  M\right)  $. On se donne
alors un $\left(  n+1,0\right)  $-cycle $M^{t}$ et un ouvert $N^{t}$ de $N$
comme dans le lemme~\ref{L/ chgt var}. $N^{o}\backslash M^{t}$ a deux
composantes connexes~; on note $N^{t,+}$ celle qui ne rencontre pas $M$.
Appliquant ce qui pr\'{e}c\`{e}de au cycle $M^{t}$ pour lequel $t$ est
r\'{e}gulier, on obtient que le support de $X_{t}^{t}$ contient $\left(
N^{t,+}\right)  _{t}$. Le support $\mathcal{X}_{t}$ du courant $X_{t}=\left[
\left(  X^{t}\right)  _{t}\right]  +\left[  \left(  N^{t}\right)  _{t}\right]
$ contient donc $\left(  N^{t,+}\right)  _{t}\cup\left(  N^{t}\right)  _{t}$,
c'est \`{a} dire $N_{t}^{+}$\smallskip.
\end{proof}

Pour d\'{e}montrer le lemme ci-dessous, on utilise les formules de Harvey et
Lawson~\cite{HaR-LaB1975} revisit\'{e}es par~\cite{DiT1998a}.

\begin{lemma}
\label{L/ reg de Xtilde}$\mathcal{X}$ est un sous-ensemble r\'{e}el analytique
connexe de $\mathbb{E}^{n,2}\backslash M$ de dimension pure $n+2$, ses
sections sont des courbes complexes et sa partie singuli\`{e}re $\mathcal{X}%
^{s}$ est de $\mathcal{H}^{n}$-mesure finie.
\end{lemma}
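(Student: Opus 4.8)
Le plan est d'exploiter la
structure feuillet\'{e}e de $\mathcal{X}$ par les $1$-cha\^{\i}nes
holomorphes $X_{t}$ et d'appliquer fibre par fibre la th\'{e}orie de
Harvey-Lawson~\cite{HaR-LaB1975} dans la version de~\cite{DiT1998a}. Je
commencerais par la r\'{e}gularit\'{e} locale \`{a} param\`{e}tre fix\'{e}~:
pour $t\in\mathcal{T}\left(  M\right)  $, le th\'{e}or\`{e}me~3.3
de~\cite{HaR1977} donne dans $M_{t}$ un compact $A_{t}$ avec $\mathcal{H}
^{1}\left(  A_{t}\right)  =0$ tel que $\overline{\mathcal{X}_{t}}\backslash
A_{t}$ est une vari\'{e}t\'{e} \`{a} bord $C^{r}$, ce qui fournit l'analyticit\'{e}
de $\mathcal{X}_{t}$ dans $\mathbb{C}_{t}^{2}\backslash M_{t}$ en tant que
courbe complexe et majore $\mathcal{H}^{0}\left(  \operatorname*{Sing}
\mathcal{X}_{t}\right)  $. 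Le point d\'{e}licat est de recoller ces
descriptions transverses en une structure r\'{e}elle analytique de dimension
$n+2$ sur $\mathbb{E}^{n,2}\backslash M$~: il faut montrer que l'union
$\mathcal{X}=\cup_{t}\mathcal{X}_{t}$ est r\'{e}elle analytique pr\`{e}s d'un
point r\'{e}gulier $\left(  t_{\ast},z_{\ast}\right)  $, et non pas seulement
analytique en $z$ \`{a} $t$ fix\'{e}.

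Pour cela, j'utiliserais la d\'{e}pendance analytique du param\`{e}tre
d\'{e}j\`{a} rencontr\'{e}e dans la preuve du lemme~\ref{L/ mom = 0}~: la
fonction moment $I_{M,h}$ est r\'{e}elle analytique, et plus pr\'{e}cis\'{e}ment
les donn\'{e}es de bord $M^{t}$ construites au lemme~\ref{L/ chgt var} sont
r\'{e}elles analytiques et coup\'{e}es transversalement par $\mathbb{C}
_{s}^{2}$ pour $s$ voisin de $t$. La construction de Harvey-Lawson produit
$\mathcal{X}_{t}$ comme lieu des z\'{e}ros communs d'une famille de fonctions
obtenues par des int\'{e}grales de type Cauchy des moments $\int_{M_{t}}h$
contre des noyaux holomorphes~; comme ces moments d\'{e}pendent
analytiquement de $t$ par le lemme~\ref{L/reecrire mom} et la remarque
ci-dessus, les coefficients des \'{e}quations d\'{e}finissant $\mathcal{X}_{t}$
h\'{e}ritent de cette d\'{e}pendance analytique en $t$. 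Ainsi $\mathcal{X}$
est localement d\'{e}fini par l'annulation d'une fonction r\'{e}elle analytique
conjointement en $\left(  t,z\right)  $, d'o\`{u} l'analyticit\'{e} r\'{e}elle
de dimension pure $n+2$ en dehors de l'ensemble des param\`{e}tres
singuliers.

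Je contr\^{o}lerais ensuite la partie singuli\`{e}re. L'ensemble $\pi
_{\mathbb{R}}\left(  M\right)  \backslash\mathcal{T}\left(  M\right)  $ des
param\`{e}tres non transverses est, par analyticit\'{e} r\'{e}elle de $N$ et
de $M$, de mesure $\mathcal{H}^{n}$ nulle dans $\mathbb{R}^{n}$~; la
contribution de ces tranches \`{a} $\mathcal{X}^{s}$ se projette donc sur un
ensemble n\'{e}gligeable. Pour les param\`{e}tres transverses, la fibre
singuli\`{e}re $\operatorname*{Sing}\mathcal{X}_{t}$ est discr\`{e}te dans
$\mathbb{C}_{t}^{2}$ (points singuliers isol\'{e}s d'une courbe complexe) avec
un nombre localement born\'{e} de points gr\^{a}ce au contr\^{o}le de masse
$2A\mathcal{H}^{1}\left(  M_{t}\right)  $ d\'{e}j\`{a} \'{e}tabli via
\cite{LaM1995}\cite{DiT1998a}. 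Une int\'{e}gration \`{a} la Fubini de ces
fibres de dimension $0$ sur le param\`{e}tre de dimension $n$ donne
$\mathcal{H}^{n}\left(  \mathcal{X}^{s}\right)  <+\infty$. La connexit\'{e} de
$\mathcal{X}$ r\'{e}sulte enfin de celle de $M$ jointe au lemme~\ref{L/
englob demianneau}~: chaque $\mathcal{X}_{t}$ contient $N_{t}^{+}$, et comme
$N^{+}$ est connexe et s'appuie sur tout $M$, les tranches se raccordent en un
ensemble connexe. Le principal obstacle sera de justifier rigoureusement que
la d\'{e}pendance param\'{e}trique des \'{e}quations de Harvey-Lawson est bien
r\'{e}elle analytique \emph{conjointement} en $\left(  t,z\right)$ et non
seulement continue, ce qui exige de revenir aux formules explicites
de~\cite{DiT1998a} et d'y injecter l'analyticit\'{e} des sections $M^{t}$.
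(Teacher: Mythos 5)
Votre plan suit la m\^{e}me id\'{e}e directrice que l'article (faire d\'{e}pendre analytiquement du param\`{e}tre les \'{e}quations de Harvey-Lawson), mais le point que vous rel\'{e}guez en fin de texte comme \emph{le principal obstacle} est exactement le c\oe ur de la preuve, et votre esquisse ne le franchit pas. L'article proc\`{e}de ainsi~: pr\`{e}s d'un point $p_{\ast}$, on choisit une projection $\pi_{1}$ \'{e}vitant $\pi_{t}\left(  M_{t}\right)  $, et la fonction $R_{t}$ de Harvey-Lawson est donn\'{e}e pour $\left\vert w\right\vert $ grand par $R_{t}\left(  \zeta,w\right)  =w^{S_{t,0}\left(  \zeta\right)  }\exp\left(  -\sum_{k\geqslant1}k^{-1}S_{t,k}\left(  \zeta\right)  w^{-k}\right)  $ avec $S_{t,k}\left(  \zeta\right)  =\frac{1}{2\pi i}\int_{M_{t}}z_{2}^{k}\frac{dz_{1}}{z_{1}-\zeta}$~; les fonctions $S_{k}$ sont $CR^{\omega}$ et se prolongent au-del\`{a} de $\overline{\gamma}$ gr\^{a}ce \`{a} la forme de Poincar\'{e} et aux d\'{e}formations $M^{t}$ du lemme~\ref{L/ chgt var}, et comme on sait \emph{a priori} que $R_{t}\left(  \zeta,.\right)  $ est rationnelle, les arguments de \cite[th. 4.6]{HaR-LaB1975} et \cite{HaR1977} men\'{e}s avec param\`{e}tre donnent $R=P/Q$ o\`{u} $P,Q$ sont des polyn\^{o}mes unitaires irr\'{e}ductibles de $CR^{\omega}\left(  \gamma\right)  \left[  w\right]  $~: c'est cette structure polynomiale \`{a} coefficients $CR^{\omega}$, et non une simple d\'{e}pendance analytique de coefficients, qui fait de $\left\{  PQ=0\right\}  $ un ensemble r\'{e}el analytique de dimension $n+2$. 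Surtout, il manque chez vous l'\'{e}tape qui identifie $\mathcal{X}$ \`{a} $\left\{  PQ=0\right\}  $ au-dessus de \emph{tout} $\gamma$ et non du seul ouvert dense $\gamma^{\,\prime}$ o\`{u} $P\left(  t,\zeta,.\right)  $ et $Q\left(  t,\zeta,.\right)  $ restent premiers entre eux~: l'article utilise pour cela les masses uniform\'{e}ment born\'{e}es des $X_{t_{\nu}}$ et \cite[th. 4.2]{HaR1977} pour montrer que toute limite de points de $\mathcal{X}_{t_{\nu}}$ appartient \`{a} $\mathcal{X}_{t_{\infty}}$ (la 1-cha\^{\i}ne limite borde $\left[  M\right]  _{t_{\infty}}$, donc co\"{\i}ncide avec $X_{t_{\infty}}$). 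Sans cet argument de compacit\'{e}, rien ne garantit que $\cup_{t}\mathcal{X}_{t}$ soit ferm\'{e} dans $\mathbb{E}^{n,2}\backslash M$, donc qu'il soit un sous-ensemble analytique. Enfin, \'{e}carter les param\`{e}tres non transverses parce qu'ils forment un ensemble de mesure nulle ne donne pas l'analyticit\'{e} de $\mathcal{X}$ au-dessus de ces tranches~; l'article s'y ram\`{e}ne au cas d'un param\`{e}tre r\'{e}gulier en utilisant $N^{+}\subset\mathcal{X}$ et \`{a} nouveau les d\'{e}formations $M^{t}$.

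Deuxi\`{e}me d\'{e}faut, votre argument \emph{\`{a} la Fubini} pour $\mathcal{H}^{n}\left(  \mathcal{X}^{s}\right)  <+\infty$ est faux tel quel~: l'in\'{e}galit\'{e} d'Eilenberg majore $\int\mathcal{H}^{0}\left(  \left(  \mathcal{X}^{s}\right)  _{t}\right)  d\mathcal{H}^{n}\left(  t\right)  $ par un multiple de $\mathcal{H}^{n}\left(  \mathcal{X}^{s}\right)  $, et non l'inverse, de sorte que la finitude des fibres n'entra\^{\i}ne aucune majoration de $\mathcal{H}^{n}\left(  \mathcal{X}^{s}\right)  $~; pour $n=1$ d\'{e}j\`{a}, le graphe d'une fonction \`{a} variation non born\'{e}e a toutes ses fibres r\'{e}duites \`{a} un point et une $\mathcal{H}^{1}$-mesure infinie. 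L'article obtient la finitude de fa\c{c}on structurelle~: au voisinage de $p_{\ast}$, $\mathcal{X}^{s}$ est contenu dans le lieu des z\'{e}ros du discriminant $\Delta$ du produit des diviseurs irr\'{e}ductibles distincts de $PQ$, et $\Delta\in CR^{\omega}\left(  V\right)  $, si bien que $\left\{  \Delta=0\right\}  $ est au plus discret sur chaque tranche et de dimension au plus $n$ en tant qu'ensemble analytique~; c'est cette structure, et non le seul d\'{e}nombrement des fibres, qui fournit la $\mathcal{H}^{n}$-mesure finie. Notez aussi que la borne $2A\mathcal{H}^{1}\left(  M_{t}\right)  $ contr\^{o}le la masse de $X_{t}$, pas directement le nombre de ses points singuliers. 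Votre argument de connexit\'{e} via le lemme~\ref{L/ englob demianneau} est en revanche correct et correspond \`{a} celui que l'article donne au lemme~\ref{L/ dX = M}.
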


\begin{proof}
Pour \'{e}tablir ce lemme, il suffit de fixer un param\`{e}tre $t_{\ast}$ et
de prouver que $\mathcal{X}$ a les propri\'{e}t\'{e}s requises au dessus d'un
voisinage de $t_{\ast}$. Etant donn\'{e} que $\mathcal{X}$ contient $N^{+}$ et
que pour tous les param\`{e}tres $t$, $M^{t}\cup M$ borde un ouvert de $N^{+}%
$, on peut supposer sans perte de g\'{e}n\'{e}ralit\'{e} que $M$ est
r\'{e}elle analytique et que $t_{\ast}$ est r\'{e}gulier pour $M$~; autrement
dit, il suffit de prouver le lemme pour $\widetilde{\mathcal{X}}$.

Soient alors $p_{\ast}=\left(  t_{\ast},\omega_{\ast}\right)  =\left(
t_{\ast},\zeta_{\ast},w_{\ast}\right)  \in\mathcal{X}_{t_{\ast}}$ et $\pi$ une
projection de $\mathbb{C}^{2}$ sur une droite complexe de $\mathbb{C}^{2}$
telle que lorsque $t$ est dans l'adh\'{e}rence d'un voisinage ouvert
$\Theta_{\ast}$ suffisamment petit de $t_{\ast}$, $\omega_{\ast}$
n'appartienne pas \`{a} l'image par $\pi_{t}:\mathbb{C}_{t}^{2}\ni\left(
t,z\right)  \mapsto\pi\left(  z\right)  $ de $\widetilde{M}_{t}=\pi
_{\mathbb{C}}\left(  M_{t}\right)  $. Quitte \`{a} changer de coordonn\'{e}es
dans $\mathbb{C}^{2}$, on suppose que $\pi\left(  z_{1},z_{2}\right)  \equiv
z_{1}$ et quitte \`{a} diminuer $\Theta_{\ast}$, on suppose que $\pi_{t}$
projette la courbe r\'{e}elle analytique lisse $M_{t}$ sur une courbe
r\'{e}elle compacte~; l'ouvert $\Gamma^{t}=\mathbb{C}\backslash\pi_{t}\left(
M_{t}\right)  $ n'a alors qu'un nombre fini de composantes connexes qu'on note
$\Gamma_{0}^{t},...,\Gamma_{m_{t}}^{t}$, $\Gamma_{0}^{t}$ \'{e}tant celle qui
est non born\'{e}e. On pose $\Gamma_{\ast}=\ \underset{t\in\Theta_{\ast}}%
{\cup}\left\{  t\right\}  \times\Gamma^{t}$~; $\Gamma_{\ast}$ est un ouvert de
$\mathbb{E}^{n,1}$ qui contient $\left(  t_{\ast},\zeta_{\ast}\right)  $.

On sait d'apr\`{e}s les travaux cit\'{e}s pr\'{e}c\'{e}demment que
\[
X_{t}\left\vert _{\mathbb{C}_{t}^{2}\backslash\pi_{t}^{-1}\left(  \Gamma
^{t}\right)  }\right.  =\frac{i}{\pi}\partial\overline{\partial}\ln\left\vert
R_{t}\right\vert
\]
o\`{u} $R_{t}$ est une fonction qui sur $\Gamma^{t}\times\mathbb{C}$ est
holomorphe par rapport \`{a} la premi\`{e}re variable et rationnelle par
rapport \`{a} la seconde sur chacun des ouverts $\pi_{t}^{-1}\left(
\Gamma_{j}^{t}\right)  $ ($0\leqslant j\leqslant m_{t}$). Pour $\left(
t,\zeta\right)  \in\Gamma_{\ast}$, $R_{t}\left(  \zeta,.\right)  $ est
donn\'{e}e explicitement pour $\left\vert w\right\vert >>1$ par les formules
suivantes~:%
\begin{equation}
R_{t}\left(  \zeta,w\right)  =w^{S_{t,0}\left(  \zeta\right)  }\exp\left(
-\sum\limits_{k\in\mathbb{N}^{\ast}}\frac{S_{t,k}\left(  \zeta\right)  }%
{k}\frac{1}{w^{k}}\right)  ,~S_{t,k}\left(  \zeta\right)  =\frac{1}{2\pi
i}\int_{z\in M_{t}}z_{2}^{k}\frac{dz_{1}}{z_{1}-\zeta}. \label{Def Rt}%
\end{equation}
Cette formule indique non seulement que les fonctions $S_{k}:\left(
t,\zeta\right)  \mapsto S_{t,k}\left(  \zeta\right)  $ sont r\'{e}elles
analytiques sur $\Gamma_{\ast}$ mais aussi, en utilisant comme dans le
lemme~\ref{L/reecrire mom} la formule de Stokes, la forme de Poincar\'{e} et
des d\'{e}formations ad\'{e}quates de $M$ dans $N$, que pour chaque composante
connexe $\gamma$ de $\Gamma_{\ast}$, les fonctions $S_{k}\left\vert _{\gamma
}\right.  $ se prolongent en fonction r\'{e}elles analytiques au voisinage de
$\overline{\gamma}$. Le fait que les sections r\'{e}guli\`{e}res de $M$
satisfont \`{a} la condition des moments entra\^{\i}ne que $S_{t,k}=0$ sur
$\Gamma_{0}^{t}$ quelque soit $k\in\mathbb{N}$ et $t\in\Theta_{\ast}$. Notons
que $M_{t}$ \'{e}tant une r\'{e}union de courbes r\'{e}elles lorsque
$t\in\Theta_{\ast}$, $S_{0}\left(  t,\zeta\right)  $ est un entier puisque
c'est l'indice du cycle $\left[  M\right]  _{t}$ par rapport au point $\left(
t,\zeta\right)  $~; si $t\in\Theta_{\ast}$ et $1\leqslant j\leqslant m_{t} $,
on note $s_{t,j}$ la valeur que prend $S_{0}$ sur $\Gamma_{j}^{t}$.

Puisque nous savons a priori d'apr\`{e}s~\cite{HaR-LaB1975} (voir aussi
\cite{HaR1977}) que les fonctions $R_{t}\left(  \zeta,.\right)  $ sont des
fractions rationnelles en $w$, pour tout $k\in\mathbb{N}$, $S_{k}\left(
t,\zeta\right)  $ est la diff\'{e}rence entre la somme des puissances
$k$-\`{e}mes des racines de $R_{t}\left(  \zeta,.\right)  $
r\'{e}p\'{e}t\'{e}s avec leur multiplicit\'{e} et la m\^{e}me somme mais
portant sur les p\^{o}les de $R_{t}\left(  \zeta,.\right)  $. Notons $R$ la
fonction $R:(t,\zeta,w)\mapsto R_{t}\left(  \zeta,w\right)  $. En ayant \`{a}
l'esprit l'ajout d'un param\`{e}tre r\'{e}el, il d\'{e}coule des arguments de
Harvey et Lawson dans \cite[th. 4.6]{HaR-LaB1975} et \cite[lemme~3.21 \&
3.19]{HaR1977} que sur chaque composante connexe $\gamma$ de $\Gamma_{\ast}$,
$R\left(  t,\zeta,w\right)  =\frac{P\left(  t,\zeta,w\right)  }{Q\left(
t,\zeta,w\right)  }$ o\`{u} $P$ et $Q$ sont des polyn\^{o}mes unitaires et
irr\'{e}ductibles de l'anneau $CR^{\omega}\left(  \gamma\right)  \left[
w\right]  $.

Prouvons maintenant que pour toute composante connexe $\gamma$ de
$\Gamma_{\ast}$, $\widetilde{\mathcal{X}}$ co\"{\i}ncide au dessus de $\gamma$
via la projection $\pi_{1}:\mathbb{E}^{n,2}\ni\left(  t,z\right)
\mapsto\left(  t,z_{1}\right)  $ avec l'ensemble des points $\left(
t,\zeta,w\right)  $ de $\mathbb{E}^{n,2}\backslash M$ tels que $\left(
t,\zeta\right)  \in\gamma$ et $P\left(  t,\zeta,w\right)  Q\left(
t,\zeta,w\right)  =0$. Puisque $P$ et $Q$ sont premiers entre eux dans
$CR^{\omega}\left(  \gamma\right)  \left[  w\right]  $, $P\left(
t,\zeta,.\right)  $ et $Q\left(  t,\zeta,.\right)  $ sont premiers entre eux
pour $\left(  t,\zeta\right)  $ variant dans un ouvert dense $\gamma
^{\,\prime}$ de $\gamma$~; lorsque $\left(  t,\zeta\right)  \in\gamma
^{\,\prime}$, $\mathcal{X}_{t}$ contient l'ensemble des points $\left(
t,\zeta,w\right)  $ de $\mathbb{E}^{n,2}$ tels que $\left(  t,\zeta\right)
\in\gamma^{\,\prime}$ et $P\left(  t,\zeta,w\right)  Q\left(  t,\zeta
,w\right)  =0$. Soit $\left(  t_{\infty},\zeta_{\infty}\right)  \in\gamma$ et
$w_{\infty}\in\mathbb{C}$ tel que $p_{\infty}=\left(  t_{\infty},\zeta
_{\infty},w_{\infty}\right)  \in\mathbb{E}^{n,2}\backslash M$ et annule $PQ$.
Soit $\left(  p_{\nu}\right)  _{\nu\in\mathbb{N}^{\ast}}$ une suite de points
$p_{\nu}=\left(  t_{\nu},\zeta_{\nu},w_{\nu}\right)  $ de $\mathbb{E}^{n,2}$
convergeant vers $p_{\infty}$ telle que $\left(  t_{\nu},\zeta_{\nu}\right)
\in\gamma^{\,\prime}$ et $\left(  t_{\nu},\zeta_{\nu},w_{\nu}\right)
\in\mathcal{X}_{t_{\nu}}$. Puisque $M_{t_{\nu}}$ ($\nu\in\mathbb{N}$) est une
section r\'{e}guli\`{e}re de $M$, $M_{t_{\nu}}$ est bien le support de
$\left[  M\right]  _{t_{\nu}}$ et $M_{t_{\nu}}$ converge vers $M_{t_{\infty}}$
au sens de la m\'{e}trique de Hausdorff. Etant donn\'{e} que $\overline
{\Theta_{\ast}}\subset\mathcal{T}\left(  M\right)  $, les $X_{t_{\nu}}$
($\nu\in\mathbb{N}^{\ast}$) sont de masse uniform\'{e}ment born\'{e}e. On peut
donc d'apr\`{e}s~\cite[th. 4.2]{HaR1977} supposer, modulo une \'{e}ventuelle
extraction, que $X_{t_{\nu}}$ converge au sens des courants vers une 1-chaine
holomorphe $X_{\infty}$ de $\mathbb{C}_{t_{\infty}}^{2}\backslash
M_{t_{\infty}}$ v\'{e}rifiant $dX_{\infty}=\left[  M\right]  _{t_{\infty}%
}=dX_{t_{\infty}}$, ce qui force $X_{\infty}=X_{t_{\infty}}$. Puisque
$p_{\infty}\notin M$, on en d\'{e}duit que $p_{\infty}\in\operatorname{Supp}%
X_{t_{\infty}}=\mathcal{X}_{t_{\infty}}$.

Finalement, ceci prouve l'existence d'un voisinage $G$ de $p_{\ast}$ dans
$\mathbb{E}^{n,2}$ tel que que $\widetilde{\mathcal{X}}\cap G$ est l'ensemble
des points $\left(  t,\zeta,w\right)  $ de $G\backslash M$ tels que $\left(
t,\zeta\right)  \in\Gamma_{\ast}$ et $P\left(  t,\zeta,w\right)  Q\left(
t,\zeta,w\right)  =0$. C'est donc un sous-ensemble r\'{e}el analytique de $G$
de dimension $n+2$. Si $V$ est l'image de $G$ par $\pi_{1}$ et si
$D_{1},...,D_{k}$ sont les diviseurs irr\'{e}ductibles deux \`{a} deux
distincts de $PQ$ dans $CR^{\omega}\left(  V\right)  \left[  w\right]  $,
$\widetilde{\mathcal{X}}{}^{s}\cap G$ est contenu dans l'ensemble des points
$\left(  t,\zeta,w\right)  $ de $G$ tels que $\left(  t,\zeta\right)  $ annule
le discriminant $\Delta$ de $D_{1}....D_{k}$. Etant donn\'{e} que $\Delta\in
CR^{\omega}\left(  V\right)  $, l'ensemble des solutions de l'\'{e}quation
$\Delta\left(  t,\zeta\right)  =0$ \`{a} $t$ fix\'{e} est au plus discret et
il s'ensuit que $\widetilde{\mathcal{X}}{}^{s}\cap G\cap\mathcal{X}_{t}$ est
aussi au plus discret. Au voisinage de $p_{\ast}$, $\widetilde{\mathcal{X}}%
{}^{s}$ est donc de dimension au plus $n$ et de $\mathcal{H}^{n}$-volume fini.
\end{proof}

\begin{lemma}
\label{L/ dX = M}$\mathcal{X}$ est connexe, orientable et la formule de Stokes
$d\left[  \mathcal{X}\right]  =\pm\left[  M\right]  $ est valide. De plus, il
existe un compact $A$ de $M$ tel que $\mathcal{H}^{n+1}\left(  A\right)  =0$
et $%
%TCIMACRO{\QTR{EuScript}{\overline{\QTR{EuScript}{\QTR{cal}{X}}}}}%
%BeginExpansion
\overline{\mathcal{X}}%
%EndExpansion
\backslash A$ est localement une vari\'{e}t\'{e} \`{a} bord de classe $C^{r}$
de bord $M$.
\end{lemma}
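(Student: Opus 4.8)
Le plan est d'\'{e}tablir successivement la finitude du volume, l'orientabilit\'{e}, la r\'{e}gularit\'{e} au bord, puis la formule de Stokes~; la connexit\'{e} de $\mathcal{X}$ a d\'{e}j\`{a} \'{e}t\'{e} obtenue au lemme~\ref{L/ reg de Xtilde}. Pour le volume, j'utiliserais la majoration $\mathbf{M}(X_t)\leqslant 2A\,\mathcal{H}^{1}(M_t)$ rappel\'{e}e apr\`{e}s le lemme~\ref{L/ mom = 0} (o\`{u} $M\subset B_{\mathbb{E}^{n,2}}(0,A)$), jointe au fait que $\mathcal{X}_t=\varnothing$ pour $t\notin\pi_{\mathbb{R}}(M)$. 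Comme $\int_{\mathbb{R}^{n}}\mathcal{H}^{1}(M_t)\,dt=\int_M\|J\pi_{\mathbb{R}}\|\,d\mathcal{H}^{n+1}$ est fini ($M$ \'{e}tant compacte de dimension $n+1$), le th\'{e}or\`{e}me de Fubini majorerait $\mathrm{vol}(\mathcal{X})$ par $\int_{\mathbb{R}^{n}}\mathcal{H}^{2}(\mathcal{X}_t)\,dt=\int_{\mathbb{R}^{n}}\mathbf{M}(X_t)\,dt<+\infty$. L'orientabilit\'{e} en d\'{e}coulerait alors via le crit\`{e}re \'{e}nonc\'{e} au \S1~: $\mathcal{X}$ est de dimension $n+2$, feuillet\'{e} par les courbes complexes $\mathcal{X}_t$, de volume fini, et sa partie singuli\`{e}re v\'{e}rifie $\mathcal{H}^{n+1}(\mathcal{X}^{s})=0$ puisque $\mathcal{H}^{n}(\mathcal{X}^{s})<+\infty$ d'apr\`{e}s le lemme~\ref{L/ reg de Xtilde}~; l'orientation naturelle \'{e}tant celle de la forme volume $i\,dt\wedge(dz_1\wedge d\overline{z_1}+dz_2\wedge d\overline{z_2})$.

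Pour la r\'{e}gularit\'{e} au bord, je poserais $A=A'\cup A''$, o\`{u} $A'$ est le lieu des $p\in M$ en lesquels $M$ n'est pas coup\'{e}e transversalement par $\mathbb{C}^{2}_{\pi_{\mathbb{R}}(p)}$, et $A''=\bigcup_{t\in\mathcal{T}(M)}A_t$, chaque $A_t\subset M_t$ \'{e}tant le compact $\mathcal{H}^{1}$-n\'{e}gligeable fourni par~\cite[th. 3.3]{HaR1977} hors duquel $\mathcal{X}_t\cup M_t$ est une sous-vari\'{e}t\'{e} \`{a} bord de classe $C^{r}$ de $\mathbb{C}_{t}^{2}$ (comme dans la preuve du lemme~\ref{L/ englob demianneau}). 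Au-dessus de $\mathcal{T}(M)$, $\pi_{\mathbb{R}}|_M$ est une submersion \`{a} fibres de dimension $1$, si bien que la formule de la coaire donne $\int_{A''}\|J\pi_{\mathbb{R}}\|\,d\mathcal{H}^{n+1}=\int_{\mathcal{T}(M)}\mathcal{H}^{1}(A_t)\,dt=0$ avec $\|J\pi_{\mathbb{R}}\|>0$, d'o\`{u} $\mathcal{H}^{n+1}(A'')=0$. Pour $A'$, l'hypoth\`{e}se que les sections $M_t$ sont des courbes (donc de dimension $\leqslant 1$) interdit toute composante verticale~; en me ramenant, comme au lemme~\ref{L/ chgt var} et licitement puisque $N^{+}\subset\mathcal{X}$ et que $M^{t}\cup M$ borde un ouvert de $N^{+}$, \`{a} des sections r\'{e}elles analytiques, le lieu de tangence deviendrait un sous-ensemble analytique propre de $M$, donc de dimension $\leqslant n$ et $\mathcal{H}^{n+1}$-n\'{e}gligeable. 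En un point $p=(t,z)\in M\setminus A$, $t$ est r\'{e}gulier et $z\in M_t\setminus A_t$~; la description analytique $\{PQ=0\}$ du lemme~\ref{L/ reg de Xtilde} hors du discriminant, jointe \`{a} la structure de vari\'{e}t\'{e} \`{a} bord de $\mathcal{X}_t\cup M_t$ dans la fibre et \`{a} la d\'{e}pendance $CR^{\omega}$ en $(t,\zeta)$, montrerait que $\overline{\mathcal{X}}$ est au voisinage de $p$ une vari\'{e}t\'{e} \`{a} bord de classe $C^{r}$ de bord $M$, l'inclusion $N^{+}\subset\mathcal{X}$ d\'{e}terminant lequel des deux demi-voisinages est rempli.

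Resterait la formule de Stokes. D'apr\`{e}s ce qui pr\'{e}c\`{e}de, $\overline{\mathcal{X}}\setminus A$ est une vari\'{e}t\'{e} \`{a} bord de classe $C^{r}$ et de bord $M\setminus A$, de sorte que la formule de Stokes classique donne $d[\mathcal{X}]=\pm[M]$ sur l'ouvert $\mathbb{E}^{n,2}\setminus A$~; en particulier $\operatorname*{Spt}(d[\mathcal{X}]\mp[M])\subset A$. Comme $[\mathcal{X}]$ et $[M]$ sont de masse finie, le courant $S=d[\mathcal{X}]\mp[M]$ est plat, et, \'{e}tant port\'{e} par $A$ avec $\mathcal{H}^{n+1}(A)=0$, le th\'{e}or\`{e}me de support~\cite[4.1.20]{FeH1969Li} force $S=0$, d'o\`{u} $d[\mathcal{X}]=\pm[M]$. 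On peut aussi contr\^{o}ler ce r\'{e}sultat par tranchage~: la formule~(\ref{F/ courant integ}) donne $(d[\mathcal{X}])_{t}=(-1)^{n}\,d([\mathcal{X}]_{t})=(-1)^{n}\,dX_t=(-1)^{n}[M]_{t}$ pour presque tout $t$, ce qui identifie les tranches des deux membres.

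La principale difficult\'{e} me semble r\'{e}sider dans la r\'{e}gularit\'{e} au bord conjugu\'{e}e au contr\^{o}le de $A$~: il faut \`{a} la fois propager la structure fibre-par-fibre de Harvey--Lawson en une structure de vari\'{e}t\'{e} \`{a} bord de classe $C^{r}$ transverse au param\`{e}tre, et s'assurer que le lieu de non-transversalit\'{e} $A'$ est bien $\mathcal{H}^{n+1}$-n\'{e}gligeable malgr\'{e} la seule r\'{e}gularit\'{e} $C^{r}$ de $M$, ce que la r\'{e}duction aux sections r\'{e}elles analytiques devrait permettre d'obtenir. Une fois ce point acquis, le passage du local au global repose sur l'argument \'{e}l\'{e}mentaire mais essentiel qu'un courant plat port\'{e} par un ensemble $\mathcal{H}^{n+1}$-n\'{e}gligeable est nul.
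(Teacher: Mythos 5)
Votre sch\'{e}ma g\'{e}n\'{e}ral recoupe largement celui du papier --- ensemble exceptionnel $A$ r\'{e}unissant les mauvais compacts $A_{t}$ fournis fibre par fibre par Harvey--Lawson et un lieu de mauvais param\`{e}tres, n\'{e}gligeabilit\'{e} par coaire, orientation canonique des sections ; votre v\'{e}rification de la finitude du volume via la majoration $2A\,\mathcal{H}^{1}(M_{t})$ est un compl\'{e}ment correct. Mais deux points pr\'{e}cis font d\'{e}faut. D'abord, votre $A=A'\cup A''$ est trop petit : $A''$ n'est d\'{e}fini qu'au-dessus de $\mathcal{T}(M)$, et en un point $p=(t,z)$ o\`{u} $M$ est transverse \`{a} $\mathbb{C}_{t}^{2}$ mais o\`{u} $t\notin\mathcal{T}(M)$ (un autre point de $M_{t}$ \'{e}tant non transverse), on a $p\notin A$ alors que votre affirmation ``$t$ est r\'{e}gulier'' est fausse. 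En un tel point, la r\'{e}gularit\'{e} au bord fibre par fibre n'est pas disponible : $X_{t}$ y a \'{e}t\'{e} construit via le cycle d\'{e}form\'{e} $M^{t}$ du lemme~\ref{L/ chgt var} et $\left[M\right]_{t}$ n'est d\'{e}fini que par la convention du corollaire~\ref{L/ ExistSect}, de sorte que~\cite[th. 3.3]{HaR1977} n'a pas \'{e}t\'{e} appliqu\'{e} \`{a} $M_{t}$. Le papier \'{e}vite ceci en mettant \emph{tout} $M\backslash\pi_{\mathbb{R}}^{-1}\left(\mathcal{T}(M)\right)$ dans $A$, n\'{e}gligeable par Sard ($\mathcal{H}^{n}\left(\pi_{\mathbb{R}}(M)\backslash\mathcal{T}(M)\right)=0$) joint \`{a} la finitude des longueurs $\mathcal{H}^{1}(M_{t})$. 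Notez aussi que votre traitement de $A'$ par ``r\'{e}duction aux sections r\'{e}elles analytiques'' n'est pas licite pour $r<\omega$ : remplacer $M$ par $M^{t}$ change la vari\'{e}t\'{e} le long de laquelle la r\'{e}gularit\'{e} au bord est affirm\'{e}e, or $A$ doit \^{e}tre une partie de $M$ elle-m\^{e}me ; l'astuce du lemme~\ref{L/ reg de Xtilde} ne vaut que pour la r\'{e}gularit\'{e} int\'{e}rieure de $\mathcal{X}$.

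Ensuite, votre conclusion par le th\'{e}or\`{e}me de support~\cite[4.1.20]{FeH1969Li} a un probl\`{e}me de signe : $\mathcal{H}^{n+1}(A)=0$ autorise $A$ \`{a} \^{e}tre de dimension $n$, donc \`{a} d\'{e}connecter $M$ ; la r\'{e}gularit\'{e} locale au bord ne donne alors qu'un signe $\varepsilon_{i}$ localement constant sur chaque composante $M_{i}$ de $M\backslash A$, et l'argument de support ne livre que $d\left[\mathcal{X}\right]=\sum\varepsilon_{i}\left[M_{i}\right]$, non $\pm\left[M\right]$ : \'{e}crire d'embl\'{e}e $d\left[\mathcal{X}\right]\mp\left[M\right]$ avec un signe global pr\'{e}suppose ce qu'il faut prouver. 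Votre contr\^{o}le par tranchage est en r\'{e}alit\'{e} la bonne voie --- et c'est celle du papier : comme $dX_{t}=\left[M\right]_{t}$ pour tout $t$ par construction, le point est d'identifier $X_{t}$ \`{a} $\pm\left[\mathcal{X}_{t}\right]$ (orientation canonique) avec un signe ind\'{e}pendant de $t$ ; le papier l'obtient en orientant de fa\c{c}on coh\'{e}rente les sections $M_{t}$ voisines d'un param\`{e}tre r\'{e}gulier, en traitant les param\`{e}tres non r\'{e}guliers via $M^{t_{\ast}}$ et l'inclusion $N^{+}\subset\mathcal{X}$, puis en invoquant la connexit\'{e} de $M$ (donc de $\pi_{\mathbb{R}}(M)$) pour rendre le signe global, apr\`{e}s quoi la formule $d\left[\mathcal{X}\right]=\varepsilon\left[M\right]$ s'obtient par int\'{e}gration des tranches. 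Une fois votre $A$ \'{e}largi comme ci-dessus et le signe trait\'{e} par ce biais, votre argument de courant plat devient superflu mais correct.
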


\begin{proof}
$\mathcal{X}$ est connexe par arcs car $\mathcal{X}$ contient $N^{+}$ qui
pr\`{e}s de $M$ est de classe $C^{1}$ et bord\'{e}e par $M$ qui est
elle-m\^{e}me connexe par arcs. Pour chaque param\`{e}tre $t$ r\'{e}gulier
pour $M$, il existe d'apr\`{e}s~\cite{HaR-LaB1975} un compact $A_{t}$ tel que
$\mathcal{H}^{1}\left(  A_{t}\right)  =0$ et tel que $\overline{\mathcal{X}%
_{t}}\backslash A_{t}$ est localement une vari\'{e}t\'{e} \`{a} bord de classe
$C^{r}$ de bord $M_{t}$. Puisque $M$ est de classe $C^{1}$, le lemme de Sard
donne que $\mathcal{H}^{n}\left(  \pi\left(  M\right)  \backslash
\mathcal{T}\left(  M\right)  \right)  =0$. Il en r\'{e}sulte que la
r\'{e}union $A$ de $M\backslash\pi^{-1}\left(  \mathcal{T}\left(  M\right)
\right)  $ et de$\underset{t\in\mathcal{T}\left(  M\right)  }{\cup}A_{t}$ a
les propri\'{e}t\'{e}s requises dans l'\'{e}nonc\'{e} du lemme.

En tant que courbe complexe, chaque section $\mathcal{X}_{t}$ de $\mathcal{X}$
poss\`{e}de une orientation canonique qui est aussi celle de $N_{t}$ puisque
$N^{+}\subset\mathcal{X}$. Lorsque $t_{\ast}$ est un param\`{e}tre
r\'{e}gulier de $M$, il est possible d'orienter de fa\c{c}on coh\'{e}rente les
sections $M_{t}$ voisines de $M_{t}$, ce qui implique qu'il existe
$\varepsilon\in\left\{  -1,+1\right\}  $ tel que $d\left[  \mathcal{X}%
_{t}\right]  =\varepsilon\left[  M_{t}\right]  $ lorsque $t$ est voisin de
$t_{\ast}$. Lorsque $t_{\ast}$ n'est pas un param\`{e}tre r\'{e}gulier de $M$,
la m\^{e}me conclusion s'applique \`{a} $M^{t_{\ast}}$ et $\mathcal{X}%
^{t_{\ast}}$ et, puisque $N^{+}\subset\mathcal{X}$, \`{a} $M$ et $\mathcal{X}%
$. $M$ \'{e}tant connexe, on en d\'{e}duit qu'il existe $\varepsilon$ dans
$\left\{  -1,1\right\}  $ tel que $d\left[  \mathcal{X}_{t}\right]
=\varepsilon\left[  M_{t}\right]  $.$\mathcal{X}$ est donc orientable et
$d\left[  \mathcal{X}\right]  =\varepsilon\left[  M\right]  $.
\end{proof}

La synth\`{e}se des lemmes pr\'{e}c\'{e}dents \'{e}tablit l'existence d'un
ensemble $\mathcal{X}$ r\'{e}solvant le probl\`{e}me pos\'{e} dans le
th\'{e}or\`{e}me~\ref{T/bord CxR A}. La preuve de ce th\'{e}or\`{e}me
s'ach\`{e}ve avec le lemme suivant~:

\begin{lemma}
\label{L/ Unicite}Il existe au plus un ensemble r\'{e}el analytique
orient\'{e} de dimension $n+2$ remplissant les conditions (1) et (2) du
th\'{e}or\`{e}me~\ref{T/bord CxR A}.
\end{lemma}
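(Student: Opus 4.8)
Le plan est de montrer que deux ensembles $\mathcal{X}$ et $\mathcal{X}'$ satisfaisant aux conclusions du théorème ont nécessairement les mêmes sections $\mathcal{X}_t$ et $\mathcal{X}'_t$ pour presque tout paramètre $t$, puis d'en déduire l'égalité globale par un argument de densité et de continuité. D'abord, je fixerais un paramètre régulier $t\in\mathcal{T}(M)$, de sorte que d'après le corollaire~\ref{L/ ExistSect} et les hypothèses du théorème~\ref{T/bord CxR A}, $\mathbb{C}_t^2$ coupe transversalement $M$, $\mathcal{X}$ et $\mathcal{X}'$. Les sections $X_t=[\mathcal{X}]_t$ et $X'_t=[\mathcal{X}']_t$ sont alors deux $1$-chaînes holomorphes de masse finie de $\mathbb{C}_t^2\backslash M_t$ vérifiant toutes deux $dX_t=[M]_t=dX'_t$. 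La différence $X_t-X'_t$ est donc une $1$-chaîne holomorphe \emph{fermée} de masse finie dans $\mathbb{C}_t^2\backslash M_t$.

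L'étape décisive consiste à invoquer le résultat d'unicité de Harvey et Lawson~\cite{HaR-LaB1975} sous la forme suivante~: une $1$-chaîne holomorphe de masse finie de $\mathbb{C}^2\backslash M_t$ sans bord, dont le support est contenu dans $\mathbb{C}^2\backslash M_t$ et qui s'étend proprement, est nécessairement nulle (car une variété analytique complexe de dimension $1$ et de volume fini sans bord dans un ouvert de $\mathbb{C}^2$ ne peut avoir de point d'accumulation à distance finie sans contredire le théorème de Remmert-Stein ou le théorème de support). Plus précisément, comme $X_t-X'_t$ est fermée de masse finie et que son support évite $M_t$, le théorème de structure des $1$-chaînes holomorphes bordées force $X_t=X'_t$ pour tout $t\in\mathcal{T}(M)$. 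Ceci donne l'égalité des sections au-dessus de l'ensemble $\pi^{-1}(\mathcal{T}(M))$, dont le complémentaire est de $\mathcal{H}^{n+1}$-mesure nulle par le lemme de Sard comme dans la preuve du lemme~\ref{L/ dX = M}.

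Il reste à propager cette égalité des sections à l'égalité des ensembles réels analytiques eux-mêmes. Puisque $\mathcal{X}$ et $\mathcal{X}'$ sont tous deux réels analytiques, connexes, de dimension pure $n+2$ et de volume fini d'après le lemme~\ref{L/ reg de Xtilde}, et qu'ils coïncident au-dessus de l'ensemble dense $\mathcal{T}(M)$, je conclurais par le principe d'unicité du prolongement analytique~: deux sous-ensembles réels analytiques irréductibles qui coïncident sur un ouvert non vide de leur partie régulière commune coïncident partout. Concrètement, fixant un point régulier $p_\ast=(t_\ast,\omega_\ast)$ de $\mathcal{X}$ avec $t_\ast\in\mathcal{T}(M)$, les deux ensembles partagent au voisinage de $p_\ast$ la même équation locale $P(t,\zeta,w)Q(t,\zeta,w)=0$ fournie par la formule explicite~(\ref{Def Rt}), d'où $\mathcal{X}=\mathcal{X}'$ près de $p_\ast$ puis, par connexité et prolongement analytique, $\mathcal{X}=\mathcal{X}'$. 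L'orientation étant déterminée de façon unique par le signe $\varepsilon$ du lemme~\ref{L/ dX = M}, on obtient bien $[\mathcal{X}]=[\mathcal{X}']$.

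\emph{Principal obstacle anticipé.} La difficulté essentielle réside dans le passage des sections à l'ensemble global~: il faut garantir que l'égalité $X_t=X'_t$ valable pour $t$ dans l'ensemble de pleine mesure $\mathcal{T}(M)$ se relève en une identité analytique sur un ouvert de dimension $n+2$, et non seulement fibre par fibre. C'est ici qu'intervient de manière cruciale la cohérence réelle analytique de $\mathcal{X}$ au sens de Cartan~\cite{CaH1957}, affirmée dans le théorème~\ref{T/bord CxR A}, qui assure l'existence d'un complexifié global et donc la validité d'un principe d'identité analytique robuste, y compris à travers le lieu singulier de $\mathcal{H}^n$-mesure finie.
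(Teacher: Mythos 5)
Your first step — comparing the slices — is exactly the paper's: for $t\in\mathcal{T}\left(  M\right)  $ the difference of the two sections is a closed holomorphic $1$-chain of finite mass in $\mathbb{C}_{t}^{2}\backslash M_{t}$, hence zero, so $\left[  \mathcal{X}^{\prime}\right]  _{t}=\pm\left[  \mathcal{X}_{t}\right]  $ (note the paper writes $d\left[  \mathcal{X}\right]  =\pm\left[  M\right]  $, so you should allow a sign rather than assert $dX_{t}=\left[  M\right]  _{t}=dX_{t}^{\prime}$; this is harmless since only the supports matter at this stage). The genuine gap is in your globalization. The identity principle you invoke --- two \emph{irreducible} real analytic sets coinciding on an open subset of their common regular part coincide everywhere --- requires irreducibility, whereas the hypotheses of th\'{e}or\`{e}me~\ref{T/bord CxR A} only give connectedness of the competitor: a connected real analytic set of pure dimension $n+2$ can strictly contain $\mathcal{X}$ while agreeing with it on an open set (think of a union of two $\left(  n+2\right)  $-dimensional branches meeting along a lower-dimensional set). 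Your appeal to coherence does not repair this: coherence is proved in the paper only \emph{afterwards} (lemme~\ref{L/ coherence}) and only for the constructed set $\mathcal{X}$; nothing of the sort is known for an arbitrary competitor, and using the theorem's coherence conclusion inside the proof of its uniqueness clause would in any case be circular. So "par connexit\'{e} et prolongement analytique" is precisely the step that fails as stated: connectedness alone does not let you propagate set-theoretic equality through the singular locus or across parameters outside $\mathcal{T}\left(  M\right)  $.

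The paper closes this gap by a purely topological open--closed argument instead of an identity principle. From $\left[  Z\right]  _{t}=\pm\left[  \mathcal{X}_{t}\right]  $ for $t\in\mathcal{T}\left(  M\right)  $ it first deduces only the \emph{inclusion} $\mathcal{X}\subset Z$ (using that $Z$ is closed in $\mathbb{E}^{n,2}\backslash M$ and that $\widetilde{\mathcal{X}}$ is dense in $\mathcal{X}$). Then, with $S$ the union of the singular sets of $Z$ and $\mathcal{X}$, the set $\mathcal{X}\backslash S$ is open in $Z\backslash S$ (two manifolds of the same dimension $n+2$, one contained in the other) and also closed in $Z\backslash S$ (since $\mathcal{X}$ is closed in $\mathbb{E}^{n,2}\backslash M$); finally $Z\backslash S$ is connected because orientability of both sets forces $\mathcal{H}^{n+1}\left(  S\right)  =0$, so a set of that size cannot disconnect the $\left(  n+2\right)  $-dimensional manifold. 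Hence $\mathcal{X}\backslash S=Z\backslash S$ and, by density, $\mathcal{X}=Z$; the sign $\varepsilon$ then fixes the orientation as in your last remark. If you replace your analytic-continuation paragraph by this clopen argument, your proof becomes essentially the paper's.
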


\begin{proof}
Supposons que $%
%TCIMACRO{\QTR{EuScript}{Z}}%
%BeginExpansion
Z%
%EndExpansion
$ soit solution du probl\`{e}me pos\'{e} dans le
th\'{e}or\`{e}me~\ref{T/bord CxR A}. Soit $t\in\mathcal{T}\left(  M\right)  $.
$\left[
%TCIMACRO{\QTR{EuScript}{Z}}%
%BeginExpansion
Z%
%EndExpansion
\right]  _{t}$ est une 1-cha\^{\i}ne holomorphe de masse finie de
$\mathbb{C}_{t}^{2}$ dont le support est contenu dans $%
%TCIMACRO{\QTR{EuScript}{Z}}%
%BeginExpansion
Z%
%EndExpansion
_{t}$ et donc dans $\mathbb{C}_{t}^{2}\backslash M_{t}$. Comme $\left(
-1\right)  ^{n}d\left[
%TCIMACRO{\QTR{EuScript}{Z}}%
%BeginExpansion
Z%
%EndExpansion
\right]  _{t}=\left(  d\left[
%TCIMACRO{\QTR{EuScript}{Z}}%
%BeginExpansion
Z%
%EndExpansion
\right]  \right)  _{t}=\left[  M\right]  _{t}=\left[  M_{t}\right]  $, on en
d\'{e}duit que $\left[
%TCIMACRO{\QTR{EuScript}{Z}}%
%BeginExpansion
Z%
%EndExpansion
\right]  _{t}=\pm\left[  \mathcal{X}_{t}\right]  $ et donc que $%
%TCIMACRO{\QTR{EuScript}{Z}}%
%BeginExpansion
Z%
%EndExpansion
$ contient $\mathcal{X}_{t}$. Ainsi, $%
%TCIMACRO{\QTR{EuScript}{Z}}%
%BeginExpansion
Z%
%EndExpansion
$ contient $\widetilde{\mathcal{X}}$ et donc $\mathcal{X}$ puisque $%
%TCIMACRO{\QTR{EuScript}{Z}}%
%BeginExpansion
Z%
%EndExpansion
$ est ferm\'{e} dans $M^{c}=\mathbb{E}^{n,2}\backslash M$. Notons $%
%TCIMACRO{\QTR{EuScript}{S}}%
%BeginExpansion
S%
%EndExpansion
$ la r\'{e}union des ensembles singuliers de $%
%TCIMACRO{\QTR{EuScript}{Z}}%
%BeginExpansion
Z%
%EndExpansion
$ et $\mathcal{X}$. $\mathcal{X}\backslash%
%TCIMACRO{\QTR{EuScript}{S}}%
%BeginExpansion
S%
%EndExpansion
$ et $%
%TCIMACRO{\QTR{EuScript}{Z}}%
%BeginExpansion
Z%
%EndExpansion
\backslash%
%TCIMACRO{\QTR{EuScript}{S}}%
%BeginExpansion
S%
%EndExpansion
$ sont donc deux vari\'{e}t\'{e}s de m\^{e}me dimension $n+2$~; comme
$\mathcal{X}\backslash%
%TCIMACRO{\QTR{EuScript}{S}}%
%BeginExpansion
S%
%EndExpansion
\subset%
%TCIMACRO{\QTR{EuScript}{Z}}%
%BeginExpansion
Z%
%EndExpansion
\backslash%
%TCIMACRO{\QTR{EuScript}{S}}%
%BeginExpansion
S%
%EndExpansion
$, $\mathcal{X}\backslash%
%TCIMACRO{\QTR{EuScript}{S}}%
%BeginExpansion
S%
%EndExpansion
$ est un ouvert de $%
%TCIMACRO{\QTR{EuScript}{Z}}%
%BeginExpansion
Z%
%EndExpansion
\backslash%
%TCIMACRO{\QTR{EuScript}{S}}%
%BeginExpansion
S%
%EndExpansion
$~; c'est aussi un ferm\'{e} de $%
%TCIMACRO{\QTR{EuScript}{Z}}%
%BeginExpansion
Z%
%EndExpansion
\backslash%
%TCIMACRO{\QTR{EuScript}{S}}%
%BeginExpansion
S%
%EndExpansion
$ car $\mathcal{X}$ est ferm\'{e} dans $M^{c}$ et $%
%TCIMACRO{\QTR{EuScript}{Z}}%
%BeginExpansion
Z%
%EndExpansion
$ contenu dans $M^{c}$. Puisque $%
%TCIMACRO{\QTR{EuScript}{Z}}%
%BeginExpansion
Z%
%EndExpansion
$ et $\mathcal{X}$ sont orientables, $\mathcal{H}^{n+1}\left(
%TCIMACRO{\QTR{EuScript}{S}}%
%BeginExpansion
S%
%EndExpansion
\right)  =0$ de sorte que $%
%TCIMACRO{\QTR{EuScript}{Z}}%
%BeginExpansion
Z%
%EndExpansion
\backslash%
%TCIMACRO{\QTR{EuScript}{S}}%
%BeginExpansion
S%
%EndExpansion
$ est connexe. Par cons\'{e}quent $\mathcal{X}\backslash%
%TCIMACRO{\QTR{EuScript}{S}}%
%BeginExpansion
S%
%EndExpansion
=%
%TCIMACRO{\QTR{EuScript}{Z}}%
%BeginExpansion
Z%
%EndExpansion
\backslash%
%TCIMACRO{\QTR{EuScript}{S}}%
%BeginExpansion
S%
%EndExpansion
$ et par densit\'{e}, $\mathcal{X}=%
%TCIMACRO{\QTR{EuScript}{Z}}%
%BeginExpansion
Z%
%EndExpansion
$.\medskip
\end{proof}

\begin{lemma}
\label{L/ coherence}$\mathcal{X}$ est un ensemble r\'{e}el analytique
coh\'{e}rent au sens de \cite{CaH1957}.
\end{lemma}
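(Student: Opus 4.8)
We must show that the real analytic set $\mathcal{X}$ constructed in the previous lemmas is coherent in the sense of Cartan, i.e. the sheaf of ideals $\mathcal{I}_{\mathcal{X}}$ of real analytic germs vanishing on $\mathcal{X}$ is a coherent sheaf of $\mathcal{O}^{\mathbb{R}}$-modules. Equivalently (by Cartan's characterization) $\mathcal{X}$ admits a global complexification, or can be defined locally by real analytic equations whose complexifications have the right dimension.

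**What we know and the strategy.** From Lemma~\ref{L/ reg de Xtilde} and its proof we already have a strong local description: near each point $p_\ast$ of $\mathcal{X}$, after a generic choice of the projection $\pi_1:(t,z)\mapsto(t,z_1)$, the set $\mathcal{X}$ is exactly the zero set of the single polynomial $PQ\in CR^\omega(\gamma)[w]$, where $P$ and $Q$ are unitary (monic) polynomials in $w$ with $CR^\omega$ coefficients in the base variables $(t,\zeta)$. This is the crucial input. The plan is to exploit the fact that a real analytic set which is locally a branched cover over a real analytic base, cut out by a monic polynomial with real analytic (here $CR^\omega$) coefficients, is automatically coherent because the defining function already \emph{is} a real analytic function on a full neighborhood in $\mathbb{E}^{n,2}$, not merely on $\mathcal{X}$.

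**Key steps, in order.** First I would recall Cartan's criterion~\cite{CaH1957}: $\mathcal{X}$ is coherent if and only if for each point $p\in\overline{\mathcal{X}}$ the ideal sheaf is generated, in a neighborhood, by finitely many real analytic functions whose complexifications define a complex analytic germ of the same dimension as $\mathcal{X}$ (equivalently, the complexification does not jump in dimension). Second, I would take the local description from Lemma~\ref{L/ reg de Xtilde}: on a neighborhood $G$ of $p_\ast$ we have $\mathcal{X}\cap G=\{(t,\zeta,w): (t,\zeta)\in\Gamma_\ast,\ (PQ)(t,\zeta,w)=0\}$ with $PQ$ monic in $w$ and $CR^\omega$ in $(t,\zeta)$. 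Since a $CR^\omega$ function is the restriction of a genuine real analytic function (indeed it is real analytic jointly in $(t,\zeta)$ and holomorphic in $\zeta$, hence real analytic in all real variables), the coefficients extend to real analytic functions on a full neighborhood in $\mathbb{E}^{n,2}$, and so does $F:=PQ$. Third, I would complexify: replacing the real variables $t$ by complex variables and $\zeta,w$ by their holomorphic extensions, the polynomial $F$ extends to a holomorphic function $\widetilde{F}$ on a neighborhood in $\mathbb{C}^{n}\times\mathbb{C}^2$, monic in $w$ of the same degree. Hence the complexification $\widetilde{\mathcal{X}}^{\,c}=\{\widetilde F=0\}$ is a complex hypersurface of the expected complex dimension $n+2$, so its real points have the right dimension and no dimension jump occurs.

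**Conclusion and main obstacle.** From the existence of such a monic defining equation on a full neighborhood one concludes coherence of $\mathcal{I}_{\mathcal{X}}$ by Cartan: locally $\mathcal{X}$ is the zero set of a principal ideal generated by a real analytic function with controlled complexification, which is the model case of a coherent real analytic hypersurface over the parameter base. Gluing these local models is automatic since coherence is a local property. The main obstacle I anticipate is \emph{the bookkeeping of the projection}: the polynomial $PQ$ and the distinguished component $\gamma$ depend on the choice of generic projection $\pi$, so one must check that the local descriptions on overlapping neighborhoods with possibly different projections patch into a well-defined coherent ideal sheaf; this reduces to the classical fact that the ideal sheaf itself is intrinsic (independent of $\pi$), so the different monic-polynomial presentations generate the same ideal on the overlap. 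A secondary point to verify carefully is that the coefficients really are restrictions of \emph{jointly} real analytic functions on $\mathbb{E}^{n,2}$ — this follows from formula~(\ref{Def Rt}), since $S_{t,k}(\zeta)$ is given by an integral over $M_t$ of a kernel holomorphic in $\zeta$ and real analytic in $t$, hence $S_k$ is $CR^\omega$ and extends holomorphically in $\zeta$ and real-analytically in $t$. Once these two points are settled, the asserted coherence of $\mathcal{X}$, and with it the remark that $\mathcal{X}$ can be defined by a global real analytic equation and admits Stein neighborhoods of its sections, follows directly.
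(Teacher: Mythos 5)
There is a genuine gap here, and it sits exactly where coherence is delicate. First, a dimension error that is symptomatic: the complexification of $\mathcal{X}\subset\mathbb{E}^{n,2}$ must live in the complexification $\mathbb{C}\mathbb{E}^{n,2}=\mathbb{C}^{n}\times\left(  \mathbb{C}\times\mathbb{C}\right)  ^{2}$ of the ambient space, which has complex dimension $n+4$; since $\mathcal{X}$ has real codimension $2$, its complexification has complex codimension $2$ and cannot be cut out by the single equation $\widetilde{F}=0$ — and your $\left\{  \widetilde{F}=0\right\}  \subset\mathbb{C}^{n}\times\mathbb{C}^{2}$ is a hypersurface of dimension $n+1$, not $n+2$ as you assert. (That the partial complexification in $t$ alone is a complex analytic subset of $\mathbb{C}^{n}\times\mathbb{C}^{2}$ is true, but in the paper it is \emph{deduced} from coherence, at the start of the proof of the th\'{e}or\`{e}me~\ref{T/ harmo}; it cannot serve as the route to coherence.) Second, and fatally: complexify correctly, i.e. polarize $z$ and $\overline{z}$ into independent variables $a,b$ and set $Z=\left\{  \widetilde{PQ}\left(  \tau,a\right)  =0\right\}  \cap\{\widetilde{\overline{PQ}}\left(  \tau,b\right)  =0\}$. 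Over a real parameter $t$ the fiber of $Z$ is locally the product $\mathcal{X}^{t}\times\overline{\mathcal{X}^{t}}$, so at every point where the section curve is locally reducible (sheet crossings, unavoidable in this generality) $Z$ contains the mixed components $A_{j}\times\overline{A_{k}}$, $j\neq k$, of the local branches, whereas the complexification of the germ of $\mathcal{X}$ is only the union of the diagonal products $A_{j}\times\overline{A_{j}}$. Hence the germ of your $Z$ at such points strictly contains the complexified germ, and the criterion of \cite{CaH1957} (prop.~12) that the paper invokes — the germ of $Z$ at \emph{every} nearby point of $\mathcal{X}$ must \emph{equal} the complexification of the germ of $\mathcal{X}$ there — fails for this $Z$. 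Equivalently, $\left(  \operatorname{Re}\left(  PQ\right)  ,\operatorname{Im}\left(  PQ\right)  \right)  $ does not generate $\mathcal{I}_{\mathcal{X}}$ at reducible points: already for $F=z_{2}^{2}-z_{1}^{2}$ in $\mathbb{C}^{2}$, the function $\left(  z_{2}-z_{1}\right)  \overline{\left(  z_{2}+z_{1}\right)  }$ vanishes on $\left\{  F=0\right\}  $ but is not in the ideal generated by $\operatorname{Re}F$ and $\operatorname{Im}F$ (polarize and evaluate at $a=\left(  1,-1\right)  $, $b=\left(  1,1\right)  $). So the principle you rely on — zero set of a monic $CR^{\omega}$ polynomial, hence coherent — is false as stated; the two obstacles you anticipate (choice of projection, joint analyticity of the $S_{k}$) are real but peripheral.

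What the paper does instead is precisely designed to avoid this. It fixes a section $X_{t_{\ast}}$ with $t_{\ast}$ regular for $M$, takes at each point $p$ of $X_{t_{\ast}}$ a Weierstrass polynomial $F^{p}$ adapted to the germ at $p$, complexifies the pair $\left(  \operatorname{Re}F^{p},\operatorname{Im}F^{p}\right)  $ into a complex analytic set $Z^{p}$ of an open set of $\mathbb{C}\mathbb{E}^{n,2}$, and glues the $Z^{p}$ along $X_{t_{\ast}}$ using uniqueness of the complexification at regular points together with the discreteness of $\operatorname{Sing}X_{t_{\ast}}$. The step for which your sketch has no substitute is the dimension control on the glued set $Z$: a priori a complex set defined by two equations in $\mathbb{C}^{n+4}$ could have components of dimension $>n+2$ (this is the germ-inflation phenomenon above), and the paper excludes this by exhibiting a foliation of $Z$ by complex curves $Y_{\left(  t,s,x^{\prime},y^{\prime}\right)  }$, obtained by running the boundary construction of the th\'{e}or\`{e}me~\ref{T/bord CxR A} a second time on the complexified slices $\widetilde{N}_{\left(  s,x^{\prime},y^{\prime}\right)  }$ containing $\widetilde{M}_{\left(  s,x^{\prime},y^{\prime}\right)  }$, which are again Levi-flat annuli with embedded boundary data. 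This forces $\dim_{\mathbb{R}}Z=2n+4$, i.e. $\dim_{\mathbb{C}}Z=n+2$, whence $Z$ is a genuine germwise complexification near $X_{t_{\ast}}$ and coherence holds at $p_{\ast}$. In short, coherence here is not a formal consequence of the monic presentation of the lemme~\ref{L/ reg de Xtilde}; it requires rerunning the filling theorem on the complexified data, and your proposal is missing that idea.
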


\begin{proof}
D'apr\`{e}s~\cite[prop. 12 p. 94]{CaH1957}, il s'agit de construire pour tout
point $p_{\ast}$ de $X$, un sous-ensemble analytique complexe $Z$ d'un
voisinage de $p_{\ast}$ dans le complexifi\'{e} $\mathbb{C\mathbb{E}}%
^{n,2}=\mathbb{C}^{n}\times\left(  \mathbb{C}\times\mathbb{C}\right)  ^{2}$ de
$\mathbb{E}^{n,2}$ tel que pour tout point $p$ voisin de $p_{\ast}$ dans $X$,
le germe $Z_{\underline{\underline{p}}}$ de $Z$ en $p$ soit le complexifi\'{e}
du germe $X_{\underline{\underline{p}}}$ de $X$ en $p$.\smallskip

Notons tout d'abord que $M$ \'{e}tant une sous-vari\'{e}t\'{e} r\'{e}elle
analytique de $\mathbb{\mathbb{E}}^{n,2}$ de dimension r\'{e}elle $n+1$, elle
est coh\'{e}rente~; sa complexifi\'{e}e $\widetilde{M}$ est une
sous-vari\'{e}t\'{e} complexe d'un voisinage ouvert de $M$ dans
$\mathbb{C\mathbb{E}}^{n,2}$ de dimension complexe $n+1$. $N$ n'est pas une
sous-vari\'{e}t\'{e} mais est localement une r\'{e}union de
sous-vari\'{e}t\'{e}s de $\mathbb{\mathbb{E}}^{n,2}$ et il est clair que $N$
est coh\'{e}rente~; on note $\widetilde{N}$ le complexifi\'{e} de $N$~; il est
de dimension complexe $n+2$.

Dans un premier temps, on construit un objet global candidat \`{a} \^{e}tre le
complexifi\'{e} de $X$. On complexifie les variables naturelles $t$,
$z_{1}=x_{1}+iy_{1}$ et $z_{2}=x_{2}+iy_{2}$ de $\mathbb{E}^{n,2}$ en posant
$\tau=t+is$, $\xi_{j}=x_{j}+ix_{j}^{\prime}$, $\eta_{j}=y_{j}+iy_{j}^{\prime}%
$, $\zeta_{j}=\xi_{j}+i\eta_{j}$, $j=1,2$. Lorsque $\left(  s,x^{\prime
},y^{\prime}\right)  \in\mathbb{R}^{n}\times\mathbb{R}^{2}\times\mathbb{R}%
^{2}$ on pose
\[
\mathbb{\mathbb{E}}_{\left(  s,x^{\prime},y^{\prime}\right)  }^{n,2}=\left\{
\left(  \tau,\xi_{1},\eta_{1},\xi_{2},\eta_{2}\right)  \in\mathbb{C\mathbb{E}%
}^{n,2}~;~\operatorname{Im}\left(  \tau,\xi_{1},\eta_{1},\xi_{2},\eta
_{2}\right)  =\left(  s,x^{\prime},y^{\prime}\right)  \right\}  ~;
\]
si $\left(  s,x^{\prime},y^{\prime}\right)  $ est de norme infini assez
petite
\[
\widetilde{N}_{\left(  s,x^{\prime},y^{\prime}\right)  }=\widetilde{N}%
\cap\mathbb{\mathbb{E}}_{\left(  s,x^{\prime},y^{\prime}\right)  }^{n,2}\text{
}%
\]
est par construction un anneau l\'{e}vi-plat dans lequel est plong\'{e}
$\widetilde{M}_{\left(  s,x^{\prime},y^{\prime}\right)  }=\widetilde{M}%
\cap\mathbb{\mathbb{E}}_{\left(  s,x^{\prime},y^{\prime}\right)  }^{n,2}$. Il
s'ensuit qu'hormis sa conclusion finale, on peut appliquer le
th\'{e}or\`{e}me~\ref{T/bord CxR A} pour obtenir un sous-ensemble r\'{e}el
analytique $Y_{\left(  s,x^{\prime},y^{\prime}\right)  }$ de
$\mathbb{\mathbb{E}}_{\left(  s,x^{\prime},y^{\prime}\right)  }^{n,2}$
feuillet\'{e} par des courbes complexes $Y_{\left(  t,s,x^{\prime},y^{\prime
}\right)  }=Y_{\left(  s,x^{\prime},y^{\prime}\right)  }\cap\left\{
\operatorname{Re}\tau=t\right\}  $. Puisque $X$ co\"{\i}ncide avec $N$ dans un
ouvert de $N\backslash M$, $Y$ co\"{\i}ncide avec $\widetilde{N}$ dans un
ouvert de $(\widetilde{N}\backslash\widetilde{M})$ et y est donc un
sous-ensemble analytique complexe de dimension complexe $n+2$.

Fixons maintenant un point $p_{\ast}=\left(  t_{\ast},z_{\ast}\right)  $ de
$X$. Puisque $X$ co\"{\i}ncide avec $N$ dans un ouvert de $N\backslash M$, on
peut supposer sans perte de g\'{e}n\'{e}ralit\'{e} que $t_{\ast}$ est
r\'{e}gulier pour $M$. Pour chaque point $p$ de $X_{t_{\ast}}$, on peut alors
trouver un voisinage de $p$ dans $\mathbb{E}^{n,2}$ et des coordonn\'{e}es
$z^{p}=\left(  z_{1}^{p},z_{2}^{p}\right)  $ de $\mathbb{C}^{2}$ centr\'{e}es
en $p$ et telles que $p$ est au-dessus de l'une des composantes connexes de
$\left(  \pi_{t_{\ast}}^{p}\right)  ^{-1}\left(  \pi_{t_{\ast}}^{p}\left(
M\right)  \right)  $ o\`{u} $\pi_{t_{\ast}}^{p}$ est la projection $\left(
t_{\ast},z^{p}\right)  \mapsto z_{1}^{p}$. On sait alors que $X$ est donn\'{e}
au voisinage de $p$ dans $\mathbb{E}^{n,2}$ par une \'{e}quation de la forme
$F^{p}\left(  t,z^{p}\right)  =0$ o\`{u} $F^{p}$ est un polyn\^{o}me de
Weiertrass en $z_{2}^{p}$. Posons $U^{p}=\operatorname{Re}F^{p}$ et
$V^{p}=\operatorname{Im}F^{p}$~; $X$ est d\'{e}fini par les \'{e}quations
$U^{p}\left(  t,z\right)  =V^{p}\left(  t,z\right)  =0$. Consid\'{e}rons dans
un voisinage ouvert de $p$ dans $\mathbb{C\mathbb{E}}^{n,2}$ le sous-ensemble
analytique complexe $Z^{p}$ d\'{e}fini par les \'{e}quations%
\[
U^{p}\left(  \tau,\zeta^{p}\right)  =V^{p}\left(  \tau,\zeta^{p}\right)  =0.
\]
o\`{u} $\zeta^{p}=\left(  \zeta_{1}^{p},\zeta_{2}^{p}\right)  $, $\zeta
_{1}^{p}=\xi_{1}^{p}+i\eta_{1}^{p}$ et $\zeta_{2}=\xi_{2}^{p}+i\eta_{2}^{p}$
complexifient les variables $z_{1}^{p}=x_{1}^{p}+iy_{1}^{p}$ et $z_{2}%
^{p}=x_{2}^{p}+iy_{2}^{p}$~; on a donc%
\[
\left\{
\begin{array}
[c]{c}%
U^{p}\left(  \tau,\zeta^{p}\right)  =\frac{1}{2}\left[  F^{p}\left(  \tau
-\tau_{\ast},\zeta^{p}\right)  +\overline{F^{p}}\left(  \tau-\tau_{\ast}%
,\xi_{1}-i\eta_{1},\xi_{2}-i\eta_{2}\right)  \right] \\
V^{p}\left(  \tau,\zeta^{p}\right)  =\frac{1}{2i}\left[  F^{p}\left(
\tau-\tau_{\ast},\zeta^{p}\right)  -\overline{F^{p}}\left(  \tau-\tau_{\ast
},\xi_{1}-i\eta_{1},\xi_{2}-i\eta_{2}\right)  \right]
\end{array}
\right.  .
\]
Par construction les ensembles $Z^{p}$, $p$ variant dans $X_{t_{\ast}}$, se
recollent le long de $X_{t_{\ast}}$. Si $p$ est un point r\'{e}gulier de
$X_{t_{\ast}}$, $Z^{p}$ est aussi r\'{e}guli\`{e}re en $p$ et est clairement
une complexification de $X_{t_{\ast}}$ au voisinage de $p$. L'unicit\'{e} de
cette complexification et le fait que les singularit\'{e}s de $X_{t_{\ast}}$
sont isol\'{e}es fait que les $Z^{p}$, $p$ variant dans $X_{t_{\ast}}$, se
recollent pr\`{e}s de $X_{t_{\ast}}$. Notons $Z_{t_{\ast}}$ la r\'{e}union des
$Z^{p}$ lorsque $p$ varie dans $X_{t_{\ast}}$. Alors $Z_{t_{\ast}}$ est un
sous-ensemble analytique complexe d'un ouvert de $\mathbb{CE}^{n,2}%
\backslash\widetilde{M}$ et par construction lorsque $t$ est suffisamment
voisin de $t_{\ast}$, la trace sur $\mathbb{E}_{\left(  0,0,0\right)  }%
^{n,2}\cap\left\{  \operatorname{Re}\tau=t\right\}  $ est $X_{t}$. Puisque $X$
co\"{\i}ncide avec $N$ dans un ouvert de $N\backslash M$ et que $N$ est
localement une r\'{e}union finie de sous-vari\'{e}t\'{e}s de $\mathbb{E}%
^{n,2}$, $Z$ co\"{\i}ncide avec $\widetilde{N}$ dans un ouvert de
$(\widetilde{N}\backslash\widetilde{M})\cap\left\{  \left\Vert t-t_{\ast
}\right\Vert <\varepsilon\right\}  $ du moment que $\varepsilon$ est assez
petit. On en d\'{e}duit que pour chaque triplet $\left(  s,x^{\prime
},y^{\prime}\right)  $ de r\'{e}els assez petits, $Z\cap\mathbb{\mathbb{E}%
}_{\left(  s,x^{\prime},y^{\prime}\right)  }^{n,2}$ est feuillet\'{e} par les
courbes complexes que sont les $Y_{\left(  t,s,x^{\prime},y^{\prime}\right)
}$, $t$ variant dans un voisinage de $t_{\ast}$. $Z$ a donc une structure de
sous-ensemble analytique complexe feuillet\'{e} par des courbes complexes
param\`{e}tr\'{e}es par $\left(  t,s,x^{\prime},y^{\prime}\right)  $ variant
dans un voisinage de $\left(  t_{\ast},0,0,0\right)  $ dans $\mathbb{R}%
^{n}\times\mathbb{R}^{n}\times\mathbb{R\times R}$. Il est donc de dimension
r\'{e}elle $2n+4$ et donc de dimension complexe $n+2$. Il s'ensuit que $Z$ est
au voisinage de $X_{t_{\ast}}$ un complexifi\'{e}. En particulier, $X$ est
coh\'{e}rent en $p_{\ast}$.
\end{proof}

\subsection{Preuve des propositions~\ref{P/ harmo},~\ref{P/ Green} et du
th\'{e}or\`{e}me~\ref{T/ harmo}}

\subsubsection{Preuve des propositions~\ref{P/ harmo},~\ref{P/ Green}}

Notons $\mu$ la mesure sur $\mathbb{C}^{n}$ que $\alpha$ d\'{e}finit par
int\'{e}gration sur $\gamma$. Les moments de $\mu$ sont alors ceux de $\alpha$
et d'apr\`{e}s \cite{HeG1995} et \cite{DiT1998b}, il existe dans
$\mathbb{C}^{n}\backslash\gamma$ une courbe complexe $\mathcal{X}$ de masse
finie bordant $\gamma$ au sens des courants et une $\left(  1,0\right)
$-forme holomorphe $\widetilde{\alpha}$ sur $\mathcal{X}$ telles que $d\left(
\left[  \mathcal{X}\right]  \wedge\widetilde{\alpha}\right)  =\mu$. Notons
$\operatorname*{Reg}\overline{\mathcal{X}}$ la partie r\'{e}guli\`{e}re de
$\overline{\mathcal{X}}$, c'est \`{a} dire l'ensemble des points r\'{e}guliers
de la courbe complexe $\mathcal{X}$ et des points de $\gamma$ au voisinage
desquels $\overline{\mathcal{X}}$ est une vari\'{e}t\'{e} \`{a} bord de bord
$\gamma$. Puisque $\gamma$ est lisse, on sait que $\sigma=\gamma
\backslash\operatorname*{Reg}\overline{\mathcal{X}}$ est de mesure
1-dimensionnelle nulle. La forme $\widetilde{\alpha}+\overline{\widetilde
{\alpha}}$ est ferm\'{e}e et la fonction multivalu\'{e}e qui vaut $u$ sur
$\gamma\cap\operatorname*{Reg}\overline{\mathcal{X}}$ et qui est obtenue par
int\'{e}gration de $\widetilde{\alpha}+\overline{\widetilde{\alpha}}$ le long
des chemins de $\operatorname*{Reg}\overline{\mathcal{X}}$ est une solution au
probl\`{e}me pos\'{e} dans la proposition~\ref{P/ harmo}.\medskip

Dans le cas d'une courbe de $\mathbb{C}^{2}$, la preuve de la
proposition~\ref{P/ Green} est contenue dans celle du
th\'{e}or\`{e}me~\ref{T/ harmo}. L'adaptation au cas d'une courbe de
$\mathbb{C}^{n}$ ne pr\'{e}sente d'autre difficult\'{e} que celle d'avoir
\`{a} utiliser des notations plus lourdes et nous en omettons la
preuve.\medskip

\subsubsection{Preuve du th\'{e}or\`{e}me~\ref{T/ harmo}}

Pla\c{c}ons nous maintenant dans la situation du
th\'{e}or\`{e}me~\ref{T/ harmo}. Les conclusions et notations du
th\'{e}or\`{e}me~\ref{T/bord CxR A} ainsi que les lemmes qui ont servi \`{a}
sa preuve sont donc en vigueur. Puisque $N$ est un anneau l\'{e}vi-plat, il
existe pour chaque point $p$ de $M$ un voisinage ouvert $G_{p}$ de $p$ dans
$\mathbb{E}^{n,2}$ tel que $M\cap G_{p}$ est contenu dans une seule des
sous-vari\'{e}t\'{e}s $\widetilde{N}_{p}$ dont $N\cap G_{p}$ est la
r\'{e}union. On peut alors d\'{e}finir sur $\widetilde{N}=\underset{p\in
M}{\cup}\widetilde{N}_{p}$ une $\left(  1,0\right)  $-forme holomorphe
$\alpha$ en posant $\alpha\left\vert _{\widetilde{N}_{p}}\right.
=\partial\left(  u\left\vert _{\widetilde{N}_{p}\cap\mathbb{C}_{t}^{2}%
}\right.  \right)  $, $p$ variant dans $M$. En utilisant les arguments de la
preuve du th\'{e}or\`{e}me~\ref{T/bord CxR A}, on peut obtenir que chaque
section $\alpha\left(  t,.\right)  $ de $\alpha$ v\'{e}rifie la condition des
moments (\ref{Mom forme sur courbe}) et donc appliquer la
proposition~\ref{P/ harmo} aux sections $M_{t}$ de $M$. Cependant, plut\^{o}t
que de prouver que les cha\^{\i}nes holomorphes ainsi produites sont des
courants d'int\'{e}gration sur des courbes complexes et d'\'{e}tablir ensuite
que les fonctions harmoniques multivalu\'{e}es obtenues sont univalu\'{e}es et
d\'{e}pendent analytiquement de $t$, nous utilisons l'ensemble $\mathcal{X}$
donn\'{e} par le th\'{e}or\`{e}me~\ref{T/bord CxR A} comme solution du
probl\`{e}me de bord pos\'{e} pour la famille $\left(  M_{t}\right)  $. Le
prolongement escompt\'{e} pour $u$ s'explicite alors par le biais de formules,
classiques dans le cas lisse, faisant intervenir des familles de fonctions de
Green associ\'{e}es \`{a} des familles de courbes complexes \'{e}ventuellement
singuli\`{e}res. Ces fonctions de Green sont obtenues dans la
proposition~\ref{P/ fction de Green} par dualit\'{e} et la r\'{e}solution du
$\overline{\partial}$ \'{e}tablie dans la proposition~\ref{P/ resdbar}; Pour
mettre en \oe uvre cette approche, nous introduisons quelques
notations.\medskip

$\mathcal{X}$ co\"{\i}ncidant avec $N$ dans un ouvert de $N\backslash M$, on
obtient en r\'{e}unissant $\mathcal{X}$ avec un voisinage ouvert ad\'{e}quat
de $M$ dans $N$, un ensemble r\'{e}el analytique $\mathcal{Y}$ dans lequel
$\mathcal{X}$ est relativement compact et bord\'{e} au sens des courants par
$\pm\left[  M^{\prime}\right]  $ o\`{u} $M^{\prime}$ est une
sous-vari\'{e}t\'{e} plong\'{e}e dans $N$ ayant la m\^{e}me orientation que
$M$. Puisque d'apr\`{e}s le$\;$th\'{e}or\`{e}me~\ref{T/bord CxR A}
$\mathcal{X}$ est un ensemble r\'{e}el analytique coh\'{e}rent, $\mathcal{Y}$
l'est aussi et \`{a} ce titre admet un complexifi\'{e} global. En prenant la
trace de ce complexifi\'{e} sur $\mathbb{C}^{n}\times\mathbb{C}^{\mathbf{2}}$,
on obtient un sous-ensemble r\'{e}el analytique coh\'{e}rent $\widetilde
{\mathcal{Y}}$ d'un ouvert $\mathbb{C}^{n}\times\mathbb{C}^{\mathbf{2}}$ dont
la trace sur $\mathbb{E}^{n,2}$ est $\mathcal{Y}$. Le fait que $\mathcal{Y}$
soit un anneau l\'{e}vi-plat pr\`{e}s de $M$ entra\^{\i}ne que $\widetilde
{\mathcal{Y}}$ est en fait un sous-ensemble analytique complexe de
$\mathbb{C}^{n}\times\mathbb{C}^{\mathbf{2}}$ qui est pr\'{e}cis\'{e}ment le
complexifi\'{e} de $\mathcal{Y}$ par rapport aux $n$ premi\`{e}res variables
r\'{e}elles de $\mathbb{E}^{n,2}$. S\'{e}lectionnons pour $\widetilde
{\mathcal{Y}}$ un voisinage de Stein $\Omega$ dans $\mathbb{C}^{n}%
\times\mathbb{C}^{\mathbf{2}}$ et $H$ une fonction holomorphe sur $\Omega$
telle que $\widetilde{\mathcal{Y}}=\left\{  H=0\right\}  $~; on a donc
$\mathcal{Y}=\left\{  F=0\right\}  $ o\`{u} $F=H\left\vert _{\Omega
\cap\mathbb{E}^{n,2}}\right.  $. Puisque $\overline{\mathcal{X}}$ est un
compact de $\Omega$, on peut aussi se donner dans $\mathbb{C}^{n}%
\times\mathbb{C}^{\mathbf{2}}$ un domaine strictement pseudoconvexe
$\Omega^{\ast}$ de classe $C^{\infty}$ tel que
\[
\mathcal{X}\subset\subset\mathcal{Y}_{0}=\mathcal{Y}\cap\Omega^{\ast}%
\subset\subset\Omega.
\]

On sait d'apr\`{e}s les travaux d'Oka et de Cartan qu'il existe une fonction
sym\'{e}trique $\mathcal{Q}\in\mathcal{O}\left(  \Omega\times\Omega
,\mathbb{C}^{n}\times\mathbb{C}^{2}\right)  $ telle que pour tout $\left(
\tau^{\prime},z^{\prime}\right)  ,\left(  \tau,z\right)  \in\Omega\times
\Omega$,%
\[
H\left(  \tau^{\prime},z^{\prime}\right)  -H\left(  \tau,z\right)
=\left\langle \mathcal{Q}\left(  \tau^{\prime},z^{\prime},\tau,z\right)
,\left(  \tau^{\prime}-\tau,z^{\prime}-z\right)  \right\rangle
\]
o\`{u} on a pos\'{e} $\left\langle v,w\right\rangle =%
%TCIMACRO{\tsum \limits_{1\leqslant j\leqslant d}}%
%BeginExpansion
{\textstyle\sum\limits_{1\leqslant j\leqslant d}}
%EndExpansion
v_{j}w_{j}$ lorsque $v,w\in\mathbb{C}^{d}$. D'apr\`{e}s~\cite{HeG-LeJ1984L},
on peut se donner une fonction barri\`{e}re pour $\Omega_{0}$, c'est \`{a}
dire une fonction $\mathcal{Q}^{\left[  0\right]  }$ \`{a} valeurs dans
$\mathbb{C}^{2}$, de classe $C^{\infty}$ au voisinage de $\overline
{\Omega^{\ast}}\times\overline{\Omega^{\ast}}$, holomorphe par rapport \`{a}
sa seconde variable et telle que
\[
\left\langle \mathcal{Q}^{\left[  0\right]  }\left(  \tau^{\prime},z^{\prime
},\tau,z\right)  ,\left(  \tau^{\prime}-\tau,z^{\prime}-z\right)
\right\rangle \neq0
\]
lorsque $\left(  \left(  \tau^{\prime},z^{\prime}\right)  ,\left(
\tau,z\right)  \right)  \in b\Omega^{\ast}\times\Omega^{\ast}$. Dans ce qui
suit, si $f$ est une fonction ou une forme d\'{e}finie sur une partie de
$\mathbb{E}^{n,2}$, on \'{e}crira $f_{t}$ pour $f\left(  t,.\right)  $. Si
$j\in\left\{  1,2\right\}  $, $t\in\mathbb{R}^{n}$, on pose $Q_{j}\left(
t,z^{\prime},z\right)  =\mathcal{Q}_{n+j}\left(  t,z^{\prime},t,z\right)  $
lorsque et $z^{\prime},z\in\Omega^{t}$ et $Q_{j}^{\left[  0\right]  }\left(
t,z^{\prime},z\right)  =\mathcal{Q}_{n+j}^{\left[  0\right]  }\left(
t,z^{\prime},t,z\right)  $ lorsque $z^{\prime},z\in\overline{\Omega^{\ast}}$.
Par construction $Q=\left(  Q_{1},Q_{2}\right)  $ et $Q^{\left[  0\right]
}=\left(  Q_{1}^{\left[  0\right]  },Q_{2}^{\left[  0\right]  }\right)  $
v\'{e}rifient%
\begin{gather*}
\forall t\in\mathbb{R}^{n},~\forall z,z^{\prime}\in\Omega^{t},~F\left(
t,z^{\prime}\right)  -F\left(  t,z\right)  =\left\langle Q\left(  t,z^{\prime
},z\right)  ,z^{\prime}-z\right\rangle \\
\forall t\in\mathbb{R}^{n},~\forall\zeta,z\in b\Omega^{\ast t}\times
\Omega^{\ast t},~\left\langle \mathcal{Q}^{\left[  0\right]  }\left(
\zeta,z\right)  ,\zeta-z\right\rangle \neq0.
\end{gather*}

On d\'{e}finit sur $\operatorname*{Reg}\mathcal{Y}^{t}$ une $\left(
1,0\right)  $-forme $\omega_{t}$ en posant%
\begin{align*}
\omega_{t}  &  =\frac{-dz_{1}}{\partial F_{t}/\partial z_{2}}~\text{sur}%
~\mathcal{Y}^{t,1}=\mathcal{Y}^{t}\cap\left\{  \partial F_{t}/\partial
z_{2}\neq0\right\} \\
\omega_{t}  &  =\frac{+dz_{2}}{\partial F_{t}/\partial z_{1}}~\text{sur}%
~\mathcal{Y}^{t,2}=\mathcal{Y}^{t}\cap\left\{  \partial F_{t}/\partial
z_{1}\neq0\right\}
\end{align*}
puis on pose%
\[
k_{t}\left(  z^{\prime},z\right)  =\det\left[  \frac{\overline{z^{\prime}%
}-\overline{z}}{\left\vert z^{\prime}-z\right\vert ^{2}},Q_{t}\left(
z^{\prime},z\right)  \right]  ~~\&~~K_{t}\left(  z^{\prime},z\right)
=k_{t}\left(  z^{\prime},z\right)  \omega_{t}\left(  z^{\prime}\right)  .
\]

Si $\varepsilon\in\mathbb{R}_{+}^{\ast}$, on pose $\Omega^{\varepsilon
}=\left\{  \left(  \tau,z\right)  \in\Omega;~\left\vert F_{\tau}\left(
z\right)  \right\vert <\varepsilon\right\}  $. On choisit $\varepsilon_{0}$ de
sorte que $b\Omega^{\varepsilon_{0}}$ soit lisse. Etant donn\'{e} que par
d\'{e}finition d'un anneau l\'{e}vi-plat, les espaces affines $\mathbb{C}%
_{t}^{2}$ coupent transversalement $N$, on obtient que $d_{z}F=\frac{\partial
F}{\partial z_{1}}dz_{1}+\frac{\partial F}{\partial z_{2}}dz_{2}$ ne s'annule
pas sur $\mathbb{E}^{n,2}\cap b\Omega^{\varepsilon_{0}}$. Dans ce qui suit, on
se donne un ouvert $V$ de $\mathbb{R}^{n}$ tel que pour tout $t\in V$,
$\mathcal{Y}^{t}$ est non vide.

\begin{proposition}
\label{P/ resdbar}On suppose que $\varphi_{t}$, $t\in V$, est la restriction
\`{a} $\mathcal{Y}^{t}$ d'une $\left(  0,1\right)  $-forme $\Phi_{t}$ de
classe $C^{\infty}$ sur $\Omega^{t}$, analytique r\'{e}elle par rapport \`{a}
$t$. Alors la formule
\[
\left(  R_{0,1}^{t}\varphi_{t}\right)  \left(  z\right)  =\frac{1}{2\pi i}%
\int_{z^{\prime}\in\mathcal{Y}_{0}^{t}}K_{t}\left(  z^{\prime},z\right)
\wedge\varphi_{t}\left(  z^{\prime}\right)
\]
d\'{e}finit sur $\operatorname*{Reg}\mathcal{Y}_{0}^{t}$ une fonction de
classe $C^{\infty}$ telle que $\overline{\partial}R_{0,1}^{t}\varphi
_{t}=\varphi_{t}$ sur $\operatorname*{Reg}\mathcal{Y}_{0}^{t}$ et sur
$\mathcal{Y}_{0}^{t}$ au sens des courants. En outre $R_{0,1}^{t}\varphi
_{t}:\operatorname*{Reg}\mathcal{Y}_{0}^{t}\rightarrow\mathbb{C}$ est
analytique r\'{e}elle par rapport \`{a} $t$. Enfin, on dispose d'une formule
de type Cauchy-Pomp\'{e}iu~: si $\chi_{t}$ est la restriction \`{a}
$\mathcal{Y}^{t}$ d'une fonction de classe $C^{\infty}$ sur $\Omega^{t}$ \`{a}
support compact dans $\mathcal{Y}_{0}^{t}$, $R_{0,1}^{t}\overline{\partial
}\chi_{t}=\chi_{t}$ sur $\operatorname*{Reg}\mathcal{Y}_{0}^{t}$.
\end{proposition}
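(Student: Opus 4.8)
The plan is to recognise $K_{t}$ as a Cauchy--Fantappi\`{e} kernel on the curve $\mathcal{Y}^{t}$ built by the Hefer--Koppelman method: the Bochner--Martinelli factor $\frac{\overline{z'}-\overline{z}}{\left\vert z'-z\right\vert ^{2}}$ supplies the diagonal singularity, while the holomorphic Hefer section $Q_{t}$, which satisfies $\left\langle Q_{t}(z',z),z'-z\right\rangle =F_{t}(z')-F_{t}(z)$ and hence vanishes identically on $\mathcal{Y}^{t}\times\mathcal{Y}^{t}$, makes the determinant $k_{t}\omega_{t}$ tangential to the curve. First I would check that $K_{t}(\cdot,z)\wedge\varphi_{t}$ is integrable on $\mathcal{Y}_{0}^{t}$: the kernel is $O(\left\vert z'-z\right\vert ^{-1})$ along the diagonal, which is integrable against the area element of a $2$-real-dimensional set, and near the isolated singular points of $\mathcal{Y}^{t}$ (discrete by Lemme~\ref{L/ reg de Xtilde}) the finite volume of $\mathcal{Y}_{0}^{t}$ together with the at-worst polar behaviour of $\omega_{t}$ keeps the integral convergent. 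Smoothness of $R_{0,1}^{t}\varphi_{t}$ on $\operatorname*{Reg}\mathcal{Y}_{0}^{t}$ then follows by differentiating under the integral sign and splitting off a shrinking diagonal neighbourhood.

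The heart of the proof is a residue computation at a regular point $z$. Writing $z'-z=\delta$ and using that $Q_{t}(z,z)=(\partial F_{t}/\partial z_{1},\partial F_{t}/\partial z_{2})$ together with the curve relation $\delta_{2}\approx-(\partial F_{t}/\partial z_{1})(\partial F_{t}/\partial z_{2})^{-1}\delta_{1}$ on $\mathcal{Y}^{t,1}$, the determinant $k_{t}$ collapses and $K_{t}(z',z)$ acquires the leading term $-\,dz_{1}'/(z_{1}'-z_{1})$, i.e. exactly the one-variable Cauchy kernel in the local holomorphic coordinate $z_{1}$. Consequently, near $z$ the operator $R_{0,1}^{t}$ reduces to the classical Cauchy transform in $z_{1}$ plus a remainder with weaker singularity, so the Cauchy--Pompeiu identity in one variable gives $\overline{\partial}R_{0,1}^{t}\varphi_{t}=\varphi_{t}$ on $\operatorname*{Reg}\mathcal{Y}_{0}^{t}$; equivalently one verifies the Koppelman identity $\overline{\partial}_{z}K_{t}(z',z)=\overline{\partial}_{z'}(\,\cdots\,)$ off the diagonal and applies the formule de Stokes after excising a ball $B(z,\varepsilon)$, the sphere integral tending to $\varphi_{t}(z)$ as $\varepsilon\downarrow0$.

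To upgrade this to the equation $\overline{\partial}R_{0,1}^{t}\varphi_{t}=\varphi_{t}$ in the sense of currents on all of $\mathcal{Y}_{0}^{t}$, it remains to control the discrete singular set $\operatorname*{Sing}\mathcal{Y}^{t}$. Since $R_{0,1}^{t}\varphi_{t}$ is locally bounded, in fact $L^{1}_{\mathrm{loc}}$, near those points and the equation already holds off a discrete set, a removable-singularity argument --- isolated points carry no $\overline{\partial}$-residue for an $L^{1}_{\mathrm{loc}}$ potential on a one-dimensional analytic set --- shows that no spurious current is produced, and the global current identity follows. The Cauchy--Pompeiu formula $R_{0,1}^{t}\overline{\partial}\chi_{t}=\chi_{t}$ for $\chi_{t}$ compactly supported in $\mathcal{Y}_{0}^{t}$ is the dual statement: integrating $\int K_{t}(z',z)\wedge\overline{\partial}\chi_{t}(z')$ by parts produces no boundary term, by compact support, and the diagonal residue $-\,dz_{1}'/(z_{1}'-z_{1})$ reproduces $\chi_{t}(z)$.

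Finally, the real-analytic dependence on $t$ is best obtained through the complexification furnished by the coherence of $\mathcal{X}$ established in Lemme~\ref{L/ coherence}: the defining function $F$, the Hefer data $Q$, the form $\omega$ and the curve $\mathcal{Y}^{t}$ are all real slices $\left\{  \tau=t\right\}$ of objects holomorphic in $\tau=t+is$ on $\widetilde{\mathcal{Y}}$, whereas the Bochner--Martinelli factor is independent of $t$; hence for $z$ in $\operatorname*{Reg}\mathcal{Y}_{0}^{t}$ the value $R_{0,1}^{t}\varphi_{t}(z)$ extends holomorphically in $\tau$, which yields its real-analyticity in $t$. The main obstacle is the passage across $\operatorname*{Sing}\mathcal{Y}^{t}$: although these are isolated points, one must ensure that neither the integrability of $K_{t}$ nor the removability argument degenerates as the multiplicity of the curve or the order of vanishing of $\partial F_{t}$ grows, and this is where the coherence of $\mathcal{X}$ and the uniform mass bounds from the preuve of Th\'{e}or\`{e}me~\ref{T/bord CxR A} are needed.
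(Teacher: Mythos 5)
Your intrinsic Koppelman argument is sound where $\mathcal{Y}^{t}$ is smooth, but it fails precisely at the two points that carry the weight of the proposition. First, your removability claim across $\operatorname*{Sing}\mathcal{Y}^{t}$ --- \og isolated points carry no $\overline{\partial}$-residue for an $L^{1}_{\mathrm{loc}}$ potential \fg{} --- is false as stated: $u(z)=1/z$ is $L^{1}_{\mathrm{loc}}$ on $\mathbb{C}$ with $\overline{\partial}u=0$ off the origin, yet $\overline{\partial}u=\pi\,\delta_{0}$ globally. Removability requires at least local boundedness (so that the cut-off contribution $u\,\overline{\partial}\chi_{\varepsilon}$ is $O(\varepsilon)$), and you have not established that $R_{0,1}^{t}\varphi_{t}$ is bounded near the singular points --- indeed the paper only \emph{defines} the integral there as a principal value, $\lim_{\delta\downarrow0}\int_{\mathcal{Y}_{0}^{t}\cap\{\left\vert d_{z}F_{t}\right\vert\geqslant\delta\}}$, and obtains the current equation not by removability but from the Coleff--Herrera residue calculus \cite{CoN-HeM1978} as applied in \cite{HeG-PoP1989}: the proof works extrinsically, on the tubes $D_{\varepsilon}^{t}=\Omega^{\ast}\cap\{\left\vert F_{t}\right\vert<\varepsilon\}$, via the homotopy formula (\ref{HOM}) with the two barriers $b^{\left[0\right]}$ (strictly pseudoconvex boundary) and $b^{\left[1\right]}=Q_{t}/(F_{t}(\zeta)-F_{t}(z))$, the factorisation $W=\omega_{t}\wedge dF_{t}$ on $\{\left\vert F_{t}\right\vert=\varepsilon\}$, and the residue limit $\varepsilon\downarrow0$, which produces $\overline{\partial}R_{0,1}^{t}\varphi_{t}=\varphi_{t}$ on $\mathcal{Y}_{0}^{t}$ in the current sense, singularities included, with no separate removability step.

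Second, your analyticity-in-$t$ argument does not go through as sketched: the Bochner--Martinelli factor $\beta(\zeta,z)$ involves conjugates and does \emph{not} complexify holomorphically in $\tau=t+is$, and the integration cycle $\mathcal{Y}_{0}^{t}$ is a moving, possibly singular set, so the principal-value integral has no evident holomorphic extension; the coherence of $\mathcal{X}$ (Lemme~\ref{L/ coherence}) is not what the paper invokes here. Its actual mechanism is the identity (\ref{Inv2}): for a \emph{fixed} $\varepsilon_{0}$ with $\{\left\vert F_{t}\right\vert=\varepsilon_{0}\}$ smooth, the term $\left(R_{D_{\varepsilon_{0}}^{\tau}}\Phi_{\tau}\right)\left(z^{\prime}\right)$ is an integral over a smooth domain depending analytically on $(\tau,z^{\prime})$, while the correction terms $J_{D_{\varepsilon_{0}}^{\tau}}^{b0}\Phi_{\tau}+A_{D_{\varepsilon_{0}}^{\tau}}^{\prime}\Phi_{\tau}$ are $o(1)$ as $\varepsilon_{0}\downarrow0^{+}$ \emph{uniformly} in $(\tau,z^{\prime})$; the Weierstrass theorem on limits of holomorphic functions then yields real-analyticity of $R_{0,1}^{t}\varphi_{t}$ in $(t,z)$. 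Your closing paragraph names this obstacle (\og the passage across $\operatorname*{Sing}\mathcal{Y}^{t}$\fg) but does not close it, and the appeal to \og uniform mass bounds \fg{} is not a substitute for the uniform $o(1)$ estimates on $J^{b0}$ and $A^{\prime}$ that make the limit argument work.
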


\begin{proof}
La formule ci-dessus est implicitement contenue dans~\cite{HeG-PoP1989} et
nous n'en proposons qu'une d\'{e}monstration abr\'{e}g\'{e}e contrairement
\`{a} la preuve de la d\'{e}pendance par rapport au param\`{e}tre $t$. Soient
$q\in\left\{  0,1,2\right\}  $ et $\Phi_{t}$ une $\left(  0,q\right)  $-forme
de classe $C^{\infty}$ sur $\Omega^{t}$ dont on note $\varphi_{t}$ la
restriction \`{a} $\mathcal{Y}_{t}$. Pour $\varepsilon\in\left]
0,\varepsilon_{0}\right]  $ tel que $\left\{  \left\vert F_{t}\right\vert
=\varepsilon\right\}  $ est lisse, on pose $D_{\varepsilon}^{t}=\Omega^{\ast
}\cap\Omega^{\varepsilon,t}$, $\partial_{1}D_{\varepsilon}^{t}=\overline
{\Omega^{\ast}}\cap\partial\left\{  \left\vert F_{t}\right\vert =\varepsilon
\right\}  $ et $\partial_{0}D_{\varepsilon}^{t}=\partial\Omega^{\ast}%
\cap\overline{D_{\varepsilon}^{t}}$. Pour \'{e}crire les formules
int\'{e}grales dont nous utilisons les notations suivantes~:%
\begin{gather*}
b^{\left[  0\right]  }\left(  \zeta,z\right)  =\frac{Q^{\left[  0\right]
}\left(  \zeta,z\right)  }{\left\langle Q_{0}\left(  \zeta,z\right)
,\zeta-z\right\rangle }~\&~b^{\left[  1\right]  }\left(  t,\zeta,z\right)
=\frac{Q_{t}\left(  \zeta,z\right)  }{\left\langle Q_{t}\left(  \zeta
,z\right)  ,\zeta-z\right\rangle }=\frac{Q_{t}\left(  \zeta,z\right)  }%
{F_{t}\left(  \zeta\right)  -F_{t}\left(  z\right)  }\\
\beta\left(  \zeta,z\right)  =\frac{\overline{\zeta}-\overline{z}}{\left\vert
\zeta-z\right\vert ^{2}}\\
\eta_{t}^{\left[  \nu\right]  }\left(  \lambda,\zeta,z\right)  =\left(
1-\lambda\right)  \beta\left(  \zeta,z\right)  +\lambda b_{t}^{\left[
\nu\right]  }\left(  \lambda,z\right)  ,~\eta_{t,z}^{\left[  \nu\right]
}\left(  \lambda,\zeta\right)  =\eta_{t}^{\left[  \nu\right]  }\left(
\lambda,\zeta,z\right)  ,~\nu=0,1,\\
W\left(  \zeta\right)  =%
%TCIMACRO{\tbigwedge \limits_{1\leqslant j\leqslant2}}%
%BeginExpansion
{\textstyle\bigwedge\limits_{1\leqslant j\leqslant2}}
%EndExpansion
d\zeta_{j}~~\&~~W^{\prime}\left(  \eta\right)  =%
%TCIMACRO{\tsum \limits_{1\leqslant j\leqslant2}}%
%BeginExpansion
{\textstyle\sum\limits_{1\leqslant j\leqslant2}}
%EndExpansion
\left(  -1\right)  ^{j+1}\eta_{j}%
%TCIMACRO{\tbigwedge \limits_{k\neq j}}%
%BeginExpansion
{\textstyle\bigwedge\limits_{k\neq j}}
%EndExpansion
d\eta_{k}\overset{d\acute{e}f}{=}%
%TCIMACRO{\tsum \limits_{1\leqslant j\leqslant2}}%
%BeginExpansion
{\textstyle\sum\limits_{1\leqslant j\leqslant2}}
%EndExpansion
\left(  -1\right)  ^{j+1}\eta_{j}d\eta_{\widehat{j}}%
\end{gather*}
et pour $z\in D_{\varepsilon}^{t}$,%
\begin{align*}
\left(  I_{D_{\varepsilon}^{t}}^{b\nu}\Phi_{t}\right)  \left(  z\right)   &
=\frac{1}{\left(  2\pi i\right)  ^{2}}\int_{\left(  \lambda,\zeta\right)
\in\left[  0,1\right]  \times\partial_{\nu}D_{\varepsilon}^{t}}\Phi_{t}\left(
\zeta\right)  \wedge W^{\prime}\left(  \eta_{t}^{\left[  \nu\right]  }\left(
\lambda,\zeta,z\right)  \right)  \wedge W\left(  \zeta\right)  ,\ \nu=0,1,\\
\left(  I_{D_{\varepsilon}^{t}}^{b\nu}\Phi_{t}\right)  \left(  z\right)   &
=\left(  I_{D_{\varepsilon}^{t}}^{b0}\Phi_{t}\right)  \left(  z\right)
+\left(  I_{D_{\varepsilon}^{t}}^{b1}\Phi_{t}\right)  \left(  z\right)  ,\\
\left(  B_{D_{\varepsilon}^{t}}\Phi_{t}\right)  \left(  z\right)   &
=\frac{1}{\left(  2\pi i\right)  ^{2}}\int_{\zeta\in D_{\varepsilon}^{t}}%
\Phi_{t}\left(  \zeta\right)  \wedge W^{\prime}\left(  \beta\left(
\zeta,z\right)  \right)  \wedge W\left(  \zeta\right)  ,\\
\left(  R_{D_{\varepsilon}^{t}}\Phi_{t}\right)  \left(  z\right)   &  =\left(
B_{D_{\varepsilon}^{t}}\Phi_{t}\right)  \left(  z\right)  +\left(
I_{D_{\varepsilon}^{t}}^{b}\Phi_{t}\right)  \left(  z\right)  .
\end{align*}
L'existence de ces int\'{e}grales ne posant pas de probl\`{e}me car le lieu
singulier de $\mathcal{Y}^{t}$ ne rencontre pas $b\Omega^{\ast}$ puisque $N$
\ \'{e}t\'{e} suppos\'{e} lisse. On d\'{e}duit de~\cite{HeG1971}
et~\cite{PoP1971} (voir aussi \cite{HeG-LeJ1984L}) que%
\begin{equation}
\forall z\in z\in D_{\varepsilon}^{t},~\left(  -1\right)  ^{q}\Phi_{t}\left(
z\right)  =\left(  L_{b}^{t,\varepsilon}\Phi_{t}\right)  \left(  z\right)
-\left(  R_{D_{\varepsilon}^{t}}\overline{\partial}\Phi_{t}\right)  \left(
z\right)  +\left(  \overline{\partial}R_{D_{\varepsilon}^{t}}\Phi_{t}\right)
\left(  z\right)  \label{HOM}%
\end{equation}
o\`{u} $L_{b}^{t,\varepsilon}\Phi$ est un terme qui vaut $0$ lorsque $q=1$ ou
que le support de $\Phi_{t}$ ne rencontre pas $b\mathcal{Y}_{0}^{t}$.\medskip

On suppose $q=1$ et on fixe $z$ dans $\mathcal{Y}_{0}^{t}\cap D_{\varepsilon
}^{t}$. La seule composante de $\Phi_{t}\wedge W^{\prime}\left(  \eta
_{t,z}^{\left[  \nu\right]  }\right)  \wedge W$ qui ne donne pas a priori une
int\'{e}grale nulle sur $\left[  0,1\right]  \times\partial_{\nu
}D_{\varepsilon}^{t}$ est la forme $\Psi_{t,z}^{\left[  \nu\right]  }$
d\'{e}finie par%
\[
\Psi_{t,z}^{\left[  \nu\right]  }=\Phi_{t}\wedge W_{\lambda}^{\prime}\left(
\eta_{t,z}^{\left[  \nu\right]  }\right)  \wedge W
\]
o\`{u} si $x$ est l'une des variables $\lambda$ ou $\zeta$, la notation
$W_{x}^{\prime}\left(  \eta_{t,z}\right)  $ signifie que dans la formule
d\'{e}finissant $W^{\prime}\left(  \eta_{t,z}\right)  $ on ne calcule de
diff\'{e}rentielle que par rapport \`{a} $x$. Un calcul direct donne que
\[
\int_{0}^{1}W_{\lambda}^{\prime}\left(  \eta_{t,z}^{\left[  \nu\right]
}\right)  =\int_{0}^{1}%
%TCIMACRO{\tsum \limits_{1\leqslant j\leqslant2}}%
%BeginExpansion
{\textstyle\sum\limits_{1\leqslant j\leqslant2}}
%EndExpansion
\left(  -1\right)  ^{j+1}\eta_{t,z,j}^{\left[  \nu\right]  }d_{\lambda}%
\eta_{t,z,\widehat{j}}^{\left[  \nu\right]  }=\det\left(  \beta,b_{t}^{\left[
\nu\right]  }\right)  \left\vert _{\left(  .,z\right)  }\right.  .
\]
Par ailleurs $W$ se factorisant sous la forme $\omega_{t}\wedge dF_{t}$ sur
$\left\{  \left\vert F_{t}\right\vert =\varepsilon\right\}  $, il vient
\begin{align*}
\left(  I_{D_{\varepsilon}^{t}}^{b1}\Phi_{t}\right)  \left(  z\right)   &
=\frac{1}{\left(  2\pi i\right)  ^{2}}\int_{\zeta\in\left\{  \left\vert
F_{t}\right\vert =\varepsilon\right\}  \cap\Omega^{\ast}}k_{t}\left(
\zeta,z\right)  \Phi_{t}\left(  \zeta\right)  \wedge\omega_{t}\left(
\zeta\right)  \wedge\frac{dF_{t}\left(  \zeta\right)  }{F_{t}\left(
\zeta\right)  }\\
\left(  I_{D_{\varepsilon}^{t}}^{b0}\Phi_{t}\right)  \left(  z\right)   &
=\frac{1}{\left(  2\pi i\right)  ^{2}}\int_{\zeta\in\left\{  \left\vert
F_{t}\right\vert \leqslant\varepsilon\right\}  \cap\partial\Omega^{\ast}}%
\det\left(  \beta\left(  \zeta,z\right)  ,b^{\left[  0\right]  }\left(
\zeta,z\right)  \right)  \Phi_{t}\left(  \zeta\right)  \wedge W\left(
\zeta\right)
\end{align*}
Par ce que $\operatorname*{Sing}\mathcal{Y}^{t}$ ne rencontre par
$b\Omega^{\ast}$, $\underset{\varepsilon\longrightarrow0^{+}}{\lim}\left(
I_{D_{\varepsilon}^{t}}^{b0}\Phi_{t}\right)  \left(  z\right)  $ existe et
vaut $0$. Par ailleurs, les r\'{e}sultats de Coleff et
Herrera~\cite{CoN-HeM1978} appliqu\'{e}s par~\cite{HeG-PoP1989} donnent tout
d'abord que $\underset{\varepsilon\longrightarrow0^{+}}{\lim}I_{D_{\varepsilon
}^{t}}^{b1}\Phi_{t}$ existe au sens des courants et vaut $-R_{0,1}^{t}%
\varphi=\frac{1}{2\pi i}\int_{\mathcal{Y}_{0}^{t}}k_{t}\left(  \zeta,.\right)
\varphi_{t}\left(  \zeta\right)  \wedge\omega_{t}\left(  \zeta\right)  $,
cette int\'{e}grale \'{e}tant \`{a} comprendre au sens des valeurs
principales, c'est \`{a} dire comme la limite lorsque $\delta$ tend vers
$0^{+}$ de $\int_{\mathcal{Y}_{0}^{t}\cap\left\{  \left\vert d_{z}%
F_{t}\right\vert \geqslant\delta\right\}  }k_{t}\left(  \zeta,.\right)
\varphi_{t}\left(  \zeta\right)  \wedge\omega_{t}\left(  \zeta\right)  $. Il
s'ensuit que $R_{0,1}^{t}\varphi_{t}=-\underset{\varepsilon\longrightarrow
0^{+}}{\lim}R_{D_{\varepsilon}^{t}}\Phi_{t}$ v\'{e}rifie $\overline{\partial
}\left(  R_{0,1}^{t}\varphi_{t}\right)  =\varphi_{t}$ sur $\operatorname*{Reg}%
\mathcal{Y}^{t}$ et sur $\mathcal{Y}^{t}$ au sens des courants. Les m\^{e}mes
r\'{e}f\'{e}rences donnent aussi que sur $\operatorname*{Reg}\mathcal{Y}^{t}$,
$R_{0,1}^{t}\varphi_{t}$ est de classe $C^{\infty}$.

Il reste \`{a} prouver l'analyticit\'{e} de $R_{0,1}^{t}\varphi_{t}$ par
rapport \`{a} $t$. Fixons $z$ dans $\operatorname*{Reg}\mathcal{Y}_{0}^{t}\cap
D_{\varepsilon}^{t}$. Un calcul \'{e}l\'{e}mentaire donne
\[
d_{\zeta}\Psi_{t,z}^{\left[  \nu\right]  }\wedge W=d_{\lambda}\left(  \Phi
_{t}\wedge W_{\zeta}^{\prime}\left(  \eta_{t,z}^{\left[  \nu\right]  }\right)
\wedge W\right)  +\overline{\partial}_{\zeta}\Phi_{t}\wedge W_{\lambda
}^{\prime}\left(  \eta_{t,z}^{\left[  \nu\right]  }\right)  \wedge W.
\]
Puisque%
\[
\left(  I_{D_{\varepsilon_{0}}^{t}}^{b1}\Phi_{t}\right)  \left(  z\right)
-\left(  I_{D_{\varepsilon}^{t}}^{b1}\Phi_{t}\right)  \left(  z\right)
=\frac{1}{\left(  2\pi i\right)  ^{2}}\int_{\left[  0,1\right]  }%
(\int_{\partial D_{\varepsilon_{0}}^{t}}\Psi_{t,z}^{\left[  1\right]  }%
-\int_{\partial D_{\varepsilon}^{t}}\Psi_{t,z}^{\left[  1\right]  }%
-\int_{\left\{  \varepsilon\leqslant\left\vert F_{t}\right\vert \leqslant
\varepsilon_{0}\right\}  \cap b\Omega^{\ast}}\Psi_{t,z}^{\left[  1\right]  })
\]
la formule de Stokes livre%
\[
\left(  I_{D_{\varepsilon_{0}}^{t}}^{b}\Phi_{t}\right)  \left(  z\right)
-\left(  I_{D_{\varepsilon}^{t}}^{b}\Phi_{t}\right)  \left(  z\right)
=\left(  A_{D_{\varepsilon_{0}}^{t}\backslash D_{\varepsilon}^{t}}\Phi
_{t}\right)  \left(  z\right)  -\left(  A_{D_{\varepsilon_{0}}^{t}\backslash
D_{\varepsilon}^{t}}^{\prime}\Phi_{t}\right)  \left(  z\right)  -\left(
J_{D_{\varepsilon_{0}}^{t}\backslash D_{\varepsilon}^{t}}^{b0}\Phi_{t}\right)
\left(  z\right)
\]
avec
\begin{align*}
\left(  A_{D_{\varepsilon_{0}}^{t}\backslash D_{\varepsilon}^{t}}\Phi
_{t}\right)  \left(  z\right)   &  =\frac{1}{\left(  2\pi i\right)  ^{2}}%
\int_{D_{\varepsilon_{0}}^{t}\backslash D_{\varepsilon}^{t}}\int_{\left[
0,1\right]  }d_{\lambda}\Psi_{t,z}^{\left[  1\right]  }\\
\left(  A_{D_{\varepsilon_{0}}^{t}\backslash D_{\varepsilon}^{t}}^{\prime}%
\Phi_{t}\right)  \left(  z\right)   &  =\frac{-1}{\left(  2\pi i\right)  ^{2}%
}\int_{\left[  0,1\right]  }\int_{D_{\varepsilon_{0}}^{t}\backslash
D_{\varepsilon}^{t}}\overline{\partial}\Phi_{t}\wedge W_{\lambda}^{\prime
}\left(  \eta_{t,z}^{\left[  1\right]  }\right)  \wedge W\\
\left(  J_{D_{\varepsilon_{0}}^{t}\backslash D_{\varepsilon}^{t}}^{b0}\Phi
_{t}\right)  \left(  z\right)   &  =\frac{1}{\left(  2\pi i\right)  ^{2}}%
\int_{\left[  0,1\right]  }\int_{\left\{  \varepsilon\leqslant\left\vert
F_{t}\right\vert \leqslant\varepsilon_{0}\right\}  \cap b\Omega^{\ast}}\left(
\Psi_{t,z}^{\left[  1\right]  }-\Psi_{t,z}^{\left[  0\right]  }\right)
\end{align*}
Etant donn\'{e} que $\eta_{t,z}^{\left[  1\right]  }\left(  1,.\right)  $ est
holomorphe, $W_{\zeta}^{\prime}\left(  \eta_{t,z}^{\left[  1\right]  }\right)
=0$. Avec $\eta_{t,z}^{\left[  1\right]  }\left(  0,.,\right)  =\beta\left(
.,z\right)  $, on obtient%
\[
\left(  A_{D_{\varepsilon_{0}}^{t}\backslash D_{\varepsilon}^{t}}\Phi
_{t}\right)  \left(  z\right)  =\frac{1}{\left(  2\pi i\right)  ^{2}}%
\int_{D_{\varepsilon_{0}}^{t}\backslash D_{\varepsilon}^{t}}\Phi_{t}\wedge
W_{\zeta}^{\prime}\left(  \beta\left(  .,z\right)  \right)  \wedge W
\]
D'o\`{u}%
\[
\left(  I_{D_{\varepsilon_{0}}^{t}}^{b}\Phi_{t}\right)  \left(  z\right)
-\left(  I_{D_{\varepsilon}^{t}}^{b}\Phi_{t}\right)  \left(  z\right)
=-\left(  B_{D_{\varepsilon_{0}}^{t}}\Phi_{t}\right)  \left(  z\right)
+\left(  B_{D_{\varepsilon}^{t}}\Phi_{t}\right)  \left(  z\right)  -\left(
J_{D_{\varepsilon_{0}}^{t}\backslash D_{\varepsilon}^{t}}^{b0}\Phi_{t}\right)
\left(  z\right)
\]
avec
\begin{align*}
\left(  J_{D_{\varepsilon_{0}}^{t}\backslash D_{\varepsilon}^{t}}^{b0}\Phi
_{t}\right)  \left(  z\right)   &  =\frac{1}{\left(  2\pi i\right)  ^{2}}%
\int_{\left\{  \varepsilon\leqslant\left\vert F_{t}\right\vert \leqslant
\varepsilon_{0}\right\}  \cap b\Omega^{\ast}}\det\left[  \beta\left(
.,z\right)  ,b_{t}^{\left[  1\right]  }\left(  .,z\right)  -b^{\left[
0\right]  }\left(  .,z\right)  \right]  \Phi_{t}\wedge W\\
\left(  A_{D_{\varepsilon_{0}}^{t}\backslash D_{\varepsilon}^{t}}^{\prime}%
\Phi_{t}\right)  \left(  z\right)   &  =\frac{-1}{\left(  2\pi i\right)  ^{2}%
}\int_{D_{\varepsilon_{0}}^{t}\backslash D_{\varepsilon}^{t}}\det\left(
\beta\left(  .,z\right)  ,b_{t}^{\left[  1\right]  }\left(  .,z\right)
\right)  \overline{\partial}\Phi_{t}\wedge W
\end{align*}

Etant donn\'{e} que $\left\{  F_{t}=0\right\}  $ est lisse pr\`{e}s de
$b\Omega^{\ast}$, on peut choisir $F_{t}$ comme premi\`{e}re coordonn\'{e}e
d'un syst\`{e}me de coordonn\'{e}es de $\mathbb{C}^{2}$ qui d\'{e}pend
holomorphiquement de la complexification naturelle de la variable r\'{e}elle
$t$ en une variable complexe $\tau$ de $\mathbb{C}^{n}$. On constate alors
qu'avec des notations \'{e}videntes, $\left(  J_{D_{\varepsilon}^{\tau}}%
^{b0}\Phi\right)  \left(  z\right)  $ est bien d\'{e}finie pour tout
$\varepsilon\in\left]  0,\varepsilon_{0}\right]  $ et $\tau$ suffisamment
voisin de $t$, que $\left\vert \left(  J_{D_{\varepsilon}^{\tau}}^{b}%
\Phi_{\tau}\right)  \left(  z^{\prime}\right)  \right\vert $ est un $O\left(
\varepsilon\right)  $ uniform\'{e}ment par rapport \`{a} $\left(
\tau,z^{\prime}\right)  $ quand $\left(  \tau,z^{\prime}\right)  $ varie dans
un voisinage suffisamment petit de $\left(  t,z\right)  $ dans $\mathbb{C}%
^{n}\times\operatorname*{Reg}\mathcal{Y}_{0}^{t}$ et que
\[
\left(  J_{D_{\varepsilon_{0}}^{t}\backslash D_{\varepsilon}^{t}}^{b0}\Phi
_{t}\right)  \left(  z\right)  =\left(  J_{D_{\varepsilon_{0}}^{t}}^{b0}%
\Phi_{t}\right)  \left(  z\right)  -\left(  J_{D_{\varepsilon_{0}}%
^{t}\backslash D_{\varepsilon}^{t}}^{b0}\Phi_{t}\right)  \left(  z\right)  .
\]

De m\^{e}me, gr\^{a}ce aux r\'{e}sultats de~\cite{CoN-HeM1978}, on peut
d\'{e}finir $\left(  A_{D_{\varepsilon}^{t}}^{\prime}\Phi_{t}\right)  \left(
z\right)  $ pour $\varepsilon\in\left]  0,\varepsilon_{0}\right]  $ comme la
limite de $\left(  A_{D_{\varepsilon}^{t}\backslash D_{\eta}^{t}}^{\prime}%
\Phi_{t}\right)  \left(  z\right)  $ lorsque $\eta\downarrow0^{+}$ et obtenir
aussi que $\left(  A_{D_{\varepsilon}^{t}}^{\prime}\Phi_{\tau}\right)  \left(
z^{\prime}\right)  =_{\varepsilon\downarrow0^{+}}o\left(  1\right)  $
uniform\'{e}ment en $\left(  \tau,z^{\prime}\right)  $ lorsque $\left(
\tau,z^{\prime}\right)  $ varie dans un voisinage suffisamment petit de
$\left(  t,z\right)  $ dans $\mathbb{C}^{n}\times\operatorname*{Reg}%
\mathcal{Y}$. Ecrivant
\[
\left(  A_{D_{\varepsilon_{0}}^{t}\backslash D_{\varepsilon}^{t}}^{\prime}%
\Phi_{t}\right)  \left(  z\right)  =\left(  A_{D_{\varepsilon_{0}}^{t}%
}^{\prime}\Phi_{t}\right)  \left(  z\right)  -\left(  A_{D_{\varepsilon}^{t}%
}^{\prime}\Phi_{t}\right)  \left(  z\right)
\]
on obtient qu'en fin de compte,%
\begin{equation}
\left(  R_{D_{\varepsilon_{0}}^{t}}\Phi_{t}\right)  \left(  z\right)  +\left(
J_{D_{\varepsilon_{0}}^{t}}^{b0}\Phi_{t}\right)  \left(  z\right)  +\left(
A_{D_{\varepsilon_{0}}^{t}}^{\prime}\Phi_{t}\right)  \left(  z\right)
=\left(  R_{D_{\varepsilon}^{t}}\Phi_{t}\right)  \left(  z\right)  +\left(
A_{D_{\varepsilon}^{t}}^{\prime}\Phi_{t}\right)  \left(  z\right)  \left(
J_{D_{\varepsilon}^{t}}^{b0}\Phi_{t}\right)  \left(  z\right)  \label{Inv}%
\end{equation}
En passant \`{a} la limite lorsque $\varepsilon$ tend vers $0^{+}$, il vient%
\begin{equation}
\left(  R_{D_{\varepsilon_{0}}^{t}}\Phi_{t}\right)  \left(  z\right)  +\left(
J_{D_{\varepsilon_{0}}^{t}}^{b0}\Phi_{t}\right)  \left(  z\right)  +\left(
A_{D_{\varepsilon_{0}}^{t}}^{\prime}\Phi_{t}\right)  \left(  z\right)
=\left(  R_{0,1}^{t}\varphi\right)  \left(  z\right)  \text{.} \label{Inv2}%
\end{equation}

Etant donn\'{e} que $\left\{  \left\vert F_{t}\right\vert =\varepsilon
_{0}\right\}  $ est lisse, $\left(  R_{D_{\varepsilon_{0}}^{\tau}}\Phi_{\tau
}\right)  \left(  z^{\prime}\right)  $ d\'{e}pend analytiquement de $\left(
\tau,z^{\prime}\right)  $ au voisinage de $\left(  t,z\right)  $ dans
$\mathbb{C}^{n}\times\operatorname*{Reg}\mathcal{Y}$. Puisque $\left(
J_{D_{\varepsilon_{0}}^{\tau}}^{b0}\Phi_{\tau}\right)  \left(  z^{\prime
}\right)  +\left(  A_{D_{\varepsilon_{0}}^{\tau}}^{\prime}\Phi_{\tau}\right)
\left(  z^{\prime}\right)  =_{\varepsilon_{0}\downarrow0^{+}}o\left(
1\right)  $ uniform\'{e}ment en $\left(  \tau,z^{\prime}\right)  $ au
voisinage de $\left(  t,z\right)  $ dans $\mathbb{C}^{n}\times
\operatorname*{Reg}\mathcal{Y}$, il appara\^{\i}t gr\^{a}ce au
th\'{e}or\`{e}me de Weierstrass sur les limites de fonctions holomorphes que
$\left(  R_{0,1}^{\tau}\varphi_{\tau}\right)  \left(  \tau,z^{\prime}\right)
$ est analytique au voisinage de $\left(  t,z\right)  $ dans $\mathbb{R}%
^{n}\times\operatorname*{Reg}\mathcal{Y}$.\medskip

Lorsque $q=0$ et que $b\mathcal{Y}_{0}^{t}\cap\operatorname*{Supp}\Phi
_{t}=\varnothing$, la formule (\ref{HOM}) devient $\Phi_{t}=R_{D_{\varepsilon
}^{t}}\overline{\partial}\Phi_{t}$ et ce qui pr\'{e}c\`{e}de montre qu'on peut
passer \`{a} la limite pour obtenir $\Phi_{t}\left\vert _{\operatorname*{Reg}%
\mathcal{Y}^{t}}\right.  =R_{D_{\varepsilon}^{t}}\overline{\partial}\Phi
_{t}\left\vert _{\operatorname*{Reg}\mathcal{Y}^{t}}\right.  $ sur
$\operatorname*{Reg}\mathcal{Y}^{t}$.\medskip
\end{proof}

Le lemme ci-dessous est un compl\'{e}ment n\'{e}cessaire pour \'{e}tablir
l'existence de fonctions de Green.

\begin{lemma}
\label{L/ resdbar}Les notations \'{e}tant celles de la
proposition~\ref{P/ resdbar}, on suppose qu'il existe un voisinage ouvert
$G^{\ast}$ de $M$ dans $\mathbb{E}^{n,2}$ tel que pour tout $t\in V$,
$\operatorname*{supp}\varphi_{t}\subset G^{\ast,t}$ et $G_{0}^{\ast,t}%
\cap\left(  \operatorname*{Sing}\mathcal{Y}_{0}^{t}\cup b\mathcal{Y}_{0}%
^{t}\right)  =\varnothing$. $R_{0,1}^{t}\varphi_{t}$ est alors la restriction
\`{a} $\mathcal{Y}_{0}^{t}$ d'une fonction lisse de $\Omega^{t}$ qui
d\'{e}pend analytiquement de $t$.
\end{lemma}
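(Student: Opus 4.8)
The plan is to use the extra support hypothesis to show that near $\operatorname*{Sing}\mathcal{Y}_{0}^{t}\cup b\mathcal{Y}_{0}^{t}$ the defining integral of $R_{0,1}^{t}\varphi_{t}$ loses its diagonal singularity and therefore already produces an \emph{ambient} smooth function, while on the part where $\varphi_{t}$ may be nonzero one is on the smooth manifold $\operatorname*{Reg}\mathcal{Y}_{0}^{t}$ and simply extends the function furnished by la proposition~\ref{P/ resdbar}; the two are glued by a $t$-independent partition of unity. Concretely, for $z\in\Omega^{t}$ I consider
\[
\Phi_{t}^{\flat}\left(  z\right)  =\frac{1}{2\pi i}\int_{z^{\prime}\in\mathcal{Y}_{0}^{t}}K_{t}\left(  z^{\prime},z\right)  \wedge\varphi_{t}\left(  z^{\prime}\right)  .
\]
Since $K_{t}\left(  z^{\prime},z\right)  =k_{t}\left(  z^{\prime},z\right)  \omega_{t}\left(  z^{\prime}\right)  $ is singular only along the diagonal $z^{\prime}=z$ (through $\beta\left(  z^{\prime},z\right)  =\left(  \overline{z^{\prime}}-\overline{z}\right)  /\left\vert z^{\prime}-z\right\vert ^{2}$, the factor $Q_{t}$ being holomorphic), and since $z^{\prime}$ runs over $\mathcal{Y}_{0}^{t}$ while $\operatorname*{supp}\varphi_{t}\subset G^{\ast,t}$, the integrand is non-singular as soon as $z\notin\operatorname*{supp}\varphi_{t}$. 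Hence $\Phi_{t}^{\flat}$ is well defined on $\Omega^{t}\backslash\operatorname*{supp}\varphi_{t}$ and, the integrand being real analytic in $\left(  t,z\right)  $ off the diagonal and the cycle $\mathcal{Y}_{0}^{t}=\left\{  F_{t}=0\right\}  \cap\Omega^{\ast}$ moving analytically with $t$, it is jointly real analytic in $\left(  t,z\right)  $ there; this analyticity is obtained as in la proposition~\ref{P/ resdbar}, by complexifying $t$ into $\tau$, the decisive simplification being that no principal value and hence no obstructing boundary term occur here.

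Next I would observe that $\Phi_{t}^{\flat}$ is smooth not only off $\mathcal{Y}_{0}^{t}$ but also across those points of $\mathcal{Y}_{0}^{t}$ lying outside $\operatorname*{supp}\varphi_{t}$: at such a point $\varphi_{t}$ vanishes on a whole neighborhood in $\mathcal{Y}_{0}^{t}$, so the Sokhotski--Plemelj jump of this Cauchy-type integral is zero and no singular contribution appears. By hypothesis the part of $\operatorname*{supp}\varphi_{t}$ meeting $\mathcal{Y}_{0}^{t}$ sits in $G_{0}^{\ast,t}$, which avoids $\operatorname*{Sing}\mathcal{Y}_{0}^{t}\cup b\mathcal{Y}_{0}^{t}$. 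Consequently $\Phi_{t}^{\flat}$ is a smooth ambient function on the neighborhood $W_{0}=\Omega^{t}\backslash\operatorname*{supp}\varphi_{t}$ of $\operatorname*{Sing}\mathcal{Y}_{0}^{t}\cup b\mathcal{Y}_{0}^{t}$, and on $\mathcal{Y}_{0}^{t}\cap W_{0}$ it coincides with the principal-value integral, that is with $R_{0,1}^{t}\varphi_{t}$.

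On a neighborhood $W_{1}$ of $\operatorname*{supp}\varphi_{t}$ one is entirely inside $\operatorname*{Reg}\mathcal{Y}_{0}^{t}$, where la proposition~\ref{P/ resdbar} already gives that $R_{0,1}^{t}\varphi_{t}$ is $C^{\infty}$ and analytic in $t$. Choosing, as in the proof of that proposition, $F_{t}$ as first coordinate of a holomorphic system depending analytically on $\tau$, I extend $R_{0,1}^{t}\varphi_{t}$ off $\mathcal{Y}^{t}$ by making it constant in the $F_{t}$-direction; over the finitely many charts covering the compact $\operatorname*{supp}\varphi_{t}$ this yields $E_{t}\in C^{\infty}\left(  W_{1}\right)  $, analytic in $t$, with $E_{t}\left\vert _{\mathcal{Y}^{t}\cap W_{1}}\right.  =R_{0,1}^{t}\varphi_{t}$.

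Finally $\left\{  W_{0},W_{1}\right\}  $ covers $\Omega^{t}$ and, the geometry being controlled uniformly by $\operatorname*{supp}\varphi_{t}\subset G^{\ast,t}$ with $G_{0}^{\ast,t}$ disjoint from $\operatorname*{Sing}\mathcal{Y}_{0}^{t}\cup b\mathcal{Y}_{0}^{t}$, I fix a partition of unity $\chi_{0}+\chi_{1}=1$ subordinate to it and independent of $t$, and set $u_{t}=\chi_{0}\,\Phi_{t}^{\flat}+\chi_{1}\,E_{t}$. Then $u_{t}\in C^{\infty}\left(  \Omega^{t}\right)  $ depends analytically on $t$, and since both $\Phi_{t}^{\flat}$ and $E_{t}$ restrict to $R_{0,1}^{t}\varphi_{t}$ on their respective pieces of $\mathcal{Y}_{0}^{t}$,
\[
u_{t}\left\vert _{\mathcal{Y}_{0}^{t}}\right.  =\left(  \chi_{0}+\chi_{1}\right)  R_{0,1}^{t}\varphi_{t}=R_{0,1}^{t}\varphi_{t},
\]
which is the asserted conclusion. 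The main obstacle is the second step: verifying that $\Phi_{t}^{\flat}$ is genuinely smooth across the zero set of $\varphi_{t}$ on $\mathcal{Y}_{0}^{t}$ and jointly analytic in $\left(  t,z\right)  $, which rests on the analytic dependence in $t$ of both the moving cycle $\mathcal{Y}_{0}^{t}$ and the kernel $K_{t}$, handled by the complexification of $t$ already used in la proposition~\ref{P/ resdbar}.
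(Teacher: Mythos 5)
Your argument is correct in substance, but it takes a genuinely different route from the paper's. The paper never leaves the integral-representation machinery of the proposition~\ref{P/ resdbar}: after reducing to the case $\operatorname*{supp}\Phi_{t}\subset G^{\ast}\cap\Omega^{t}$, it reads the identity~(\ref{Inv2}) at the \emph{fixed} level $\varepsilon_{0}$, notes that the boundary term $J_{D_{\varepsilon_{0}}^{t}}^{b0}\Phi_{t}$ vanishes for support reasons, and neutralizes the singular factor $1/F_{t}$ in $A_{D_{\varepsilon_{0}}^{t}}^{\prime}\Phi_{t}$ by the division $\overline{\partial}\Phi_{t}=F_{t}\,\Xi_{t}$, legitimate because $G^{\ast}\cap D_{\varepsilon_{0}}^{t}\subset\operatorname*{Reg}\mathcal{Y}^{t}$; the ambient extension of $R_{0,1}^{t}\varphi_{t}$ is thus one global, explicit sum of integral operators, manifestly smooth in $z$ and analytic in $t$, with no gluing. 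You bypass~(\ref{Inv2}) altogether: off $\operatorname*{supp}\varphi_{t}$ the defining integral has no diagonal singularity (and the principal value, needed only near $\operatorname*{Sing}\mathcal{Y}^{t}$, is there an ordinary integral), so it is itself the ambient extension, while near the support the curve is smooth and the intrinsic solution extends by a normal-constant (tubular) extension in coordinates adapted to $F_{\tau}$, the two pieces being glued by a partition of unity. What each buys: your version is more elementary and would even tolerate a degeneracy of $d_{z}F_{t}$ along $\operatorname*{Reg}\mathcal{Y}^{t}\cap G^{\ast}$ (any $t$-analytic tubular extension works), whereas the paper's factorization needs $F_{t}$ to generate the ideal there; the paper's version yields a single closed formula, which is what the duality computation of the proposition~\ref{P/ fction de Green} actually manipulates, and it sidesteps the two points you treat breezily but which are repairable --- the $t$-independent choice of the covering and partition (legitimate only locally in $t$, which suffices since analytic dependence on $t$ is a local property) and the patching of the chartwise normal-constant extensions into a single $E_{t}$ (a second partition of unity, harmless since all local extensions agree on $\mathcal{Y}^{t}$). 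Your Sokhotski--Plemelj remark is superfluous: for $z\notin\operatorname*{supp}\varphi_{t}$ the integrand is nonsingular outright, so no jump analysis is needed.
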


\begin{proof}
On peut se ramener au cas o\`{u} pour tout $t\in V$ le support de $\Phi_{t}$
est contenu aussi dans $G^{\ast}\cap\Omega^{t}$. Le terme $\left(
J_{D_{\varepsilon_{0}}^{t}}^{b0}\Phi_{t}\right)  \left(  z\right)  $
dans~(\ref{Inv2}) est alors nul tandis que%
\[
\left(  A_{D_{\varepsilon_{0}}^{t}}^{\prime}\Phi_{t}\right)  \left(  z\right)
=\frac{-1}{\left(  2\pi i\right)  ^{2}}\int_{G^{\ast}\cap D_{\varepsilon_{0}%
}^{t}}\det\left(  \beta\left(  .,z\right)  ,Q_{t}\left(  .,z\right)  \right)
\frac{\overline{\partial}\Phi_{t}}{F_{t}}\wedge W
\]
Etant donn\'{e} que $G^{\ast}\cap D_{\varepsilon_{0}}^{t}\subset
\operatorname*{Reg}\mathcal{Y}^{t}$, il est standard que dans $G^{\ast}$ on
peut factoriser $\overline{\partial}\Phi_{t}$ sous la forme $F_{t}\Xi_{t}$
o\`{u} $\Xi_{t}$ est une $\left(  0,1\right)  $-forme lisse de $G^{\ast}$ qui
d\'{e}pend analytiquement de $t$ et \'{e}crire%
\[
\left(  A_{D_{\varepsilon_{0}}^{t}}^{\prime}\Phi_{t}\right)  \left(  z\right)
=\frac{-1}{\left(  2\pi i\right)  ^{2}}\int_{G^{\ast}\cap D_{\varepsilon_{0}%
}^{t}}\det\left(  \beta\left(  .,z\right)  ,Q_{t}\left(  .,z\right)  \right)
\Xi_{t}\wedge W
\]
Sous cette forme, il appara\^{\i}t que $\left(  A_{D_{\varepsilon_{0}}^{t}%
}^{\prime}\Phi_{t}\right)  \left(  z\right)  $ est en fait la valeur en $z$ de
la restriction \`{a} $\mathcal{Y}_{0}^{t}$ d'une fonction lisse de $\Omega
^{t}$ qui d\'{e}pend analytiquement de $t$. Pour des raisons de support,
\[
\left(  R_{D_{\varepsilon_{0}}^{t}}\Phi_{t}\right)  \left(  z\right)  =\left(
B_{D_{\varepsilon_{0}}^{t}}\Phi_{t}\right)  \left(  z\right)  +\left(
I_{D_{\varepsilon_{0}}^{t}}^{b1}\Phi_{t}\right)  \left(  z\right)
\]
et il appara\^{\i}t que cette conclusion est valide pour $\left(
R_{D_{\varepsilon_{0}}^{t}}\Phi_{t}\right)  \left(  z\right)  $ et donc in
fine, avec (\ref{Inv2}), pour $\left(  R_{0,1}^{t}\varphi_{t}\right)  \left(
z\right)  $.\medskip
\end{proof}

Pour \'{e}noncer la proposition nous devons pr\'{e}ciser quelques notations.
Si $B$ est une sous-vari\'{e}t\'{e} r\'{e}elle compacte de $\mathbb{C}^{2}$ et
si $U$ est un ouvert d'une courbe complexe $Y$ de $\mathbb{C}^{2}\backslash B$
bord\'{e}e au sens des courants de $\mathbb{C}^{2}$ par $B$, on note
$\mathcal{E}_{p,q}\left(  U\right)  $ l'espace des restrictions \`{a} $U$ de
$\left(  p,q\right)  $-formes de $\mathbb{C}^{2}$ de classe $C^{\infty}$~; on
note $\mathcal{D}_{p,q}\left(  U\right)  $ l'espace de ces formes dont le
support est contenu dans $U$. On note $\widetilde{\mathcal{E}}$ $_{p,q}\left(
U\right)  $ l'espace des $\left(  p,q\right)  $-formes de classe $C^{\infty}$
sur $\operatorname*{Reg}Y$ et qui au voisinage de $\operatorname*{Sing}Y$ sont
des courants, c'est \`{a} dire qui sont dans le dual de $\mathcal{E}%
_{p,q}\left(  V\right)  $ pour un certain voisinage $V$ de
$\operatorname*{Sing}Y$ dans $Y$.

\begin{proposition}
\label{P/ fction de Green}Si $z_{\ast}\in\operatorname*{Reg}\mathcal{Y}%
_{0}^{t}$, l'expression%
\begin{equation}
g_{t,z_{\ast}}\left(  z\right)  =\frac{1}{4\pi^{2}}\int_{z^{\prime}%
\in\mathcal{Y}_{0}^{t}}\overline{k_{t}\left(  z^{\prime},z\right)  }%
k_{t}\left(  z_{\ast},z^{\prime}\right)  \omega_{t}\left(  z^{\prime}\right)
\wedge\overline{\omega_{t}}\left(  z^{\prime}\right)  . \label{F/ Green}%
\end{equation}
est bien d\'{e}finie au sens des valeurs principales quand $z\in\left(
\operatorname*{Reg}\mathcal{Y}_{0}^{t}\right)  \backslash\left\{  z_{\ast
}\right\}  $. La fonction $g_{t,z_{\ast}}$ se prolonge \`{a} $\mathcal{Y}%
_{0}^{t}$ comme un courant et v\'{e}rifie alors $i\partial\overline{\partial
}g_{t,z_{\ast}}=\Delta_{z_{\ast}}$ o\`{u} $\Delta_{z_{\ast}}=\delta_{z_{\ast}%
}dV$, $dV=i\partial\overline{\partial}\left\vert .\right\vert ^{2}$ et
$\delta_{z_{\ast}}$ est la mesure de Dirac port\'{e}e par $\left\{  z_{\ast
}\right\}  $. En outre, si $G^{\ast}$ un voisinage ouvert de $M$ dans
$\mathbb{E}^{n,2}$ tel que pour tout $t\in V$, $G^{\ast,t}\cap\left(
\operatorname*{Sing}\mathcal{Y}_{0}^{t}\cup b\mathcal{Y}_{0}^{t}\right)
=\varnothing$, $g_{t,z_{\ast}}\left\vert _{G^{\ast}\cap\mathcal{Y}_{0}^{t}%
}\right.  $ est un courant qui d\'{e}pend analytiquement de $t$.
\end{proposition}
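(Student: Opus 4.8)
The plan is to recognise the kernel in~(\ref{F/ Green}) as a composition of the Cauchy--Pompeiu operator $R_{0,1}^{t}$ of Proposition~\ref{P/ resdbar} with a Cauchy kernel, and then to read off $i\partial\overline{\partial}g_{t,z_{\ast}}$ from the two defining properties of $R_{0,1}^{t}$ (it solves $\overline{\partial}$ and reproduces compactly supported data). Writing $\Theta_{z_{\ast}}=\frac{1}{2\pi i}k_{t}(z_{\ast},\cdot)\,\omega_{t}$ for the $(1,0)$-kernel with pole at $z_{\ast}$, one reorders the wedge factors in~(\ref{F/ Green}) and uses $\frac{1}{4\pi^{2}}=-\frac{1}{(2\pi i)^{2}}$ together with $K_{t}(z',z)=k_{t}(z',z)\omega_{t}(z')$ to obtain the algebraic identity $g_{t,z_{\ast}}=-\overline{R_{0,1}^{t}\,\overline{\Theta_{z_{\ast}}}}$; that is, $g_{t,z_{\ast}}$ is minus the conjugate of the $\overline{\partial}$-solution produced by $R_{0,1}^{t}$ applied to $\overline{\Theta_{z_{\ast}}}$. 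Before exploiting this I would settle well-definedness: on $\operatorname*{Reg}\mathcal{Y}_{0}^{t}$ the factor $\overline{k_{t}(z',z)}$ has a simple pole $\sim\lvert z'-z\rvert^{-1}$ at $z'=z$ and $k_{t}(z_{\ast},z')$ a simple pole $\sim\lvert z'-z_{\ast}\rvert^{-1}$ at $z'=z_{\ast}$; since $z\neq z_{\ast}$ these are disjoint and each is integrable against the area form $\omega_{t}\wedge\overline{\omega_{t}}$, so the integral converges absolutely there, while the degeneracies of $\omega_{t}$ along $\operatorname*{Sing}\mathcal{Y}_{0}^{t}$ are absorbed by the principal-value prescription $\lim_{\delta\downarrow0}\int_{\{\lvert d_{z}F_{t}\rvert\geqslant\delta\}}$ already used in Proposition~\ref{P/ resdbar}. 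The same Coleff--Herrera estimates of~\cite{CoN-HeM1978} invoked there give the extension of $g_{t,z_{\ast}}$ as a current on $\mathcal{Y}_{0}^{t}$.

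The heart of the matter is the equation $i\partial\overline{\partial}g_{t,z_{\ast}}=\Delta_{z_{\ast}}$, which I would establish in two steps. First, $\partial_{z}g_{t,z_{\ast}}=-\Theta_{z_{\ast}}$ on $(\operatorname*{Reg}\mathcal{Y}_{0}^{t})\setminus\{z_{\ast}\}$: differentiating under the integral sign, only $\overline{k_{t}(z',z)}$ depends on $z$, its antiholomorphic factor $\overline{Q_{t}}$ is $\partial_{z}$-closed, and $\partial_{z}$ of its Cauchy part (behaving on the curve like $\frac{1}{\overline{z'-z}}$) is a Dirac mass at $z'=z$; the integral then collapses to its value at $z'=z$, namely $-\Theta_{z_{\ast}}(z)$, the normalisation $\tfrac{1}{4\pi^{2}}$ being exactly what matches the residue. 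The same conclusion follows, as a consistency check, by conjugating $\overline{\partial}R_{0,1}^{t}\psi=\psi$ (Proposition~\ref{P/ resdbar}) with $\psi=\overline{\Theta_{z_{\ast}}}$. Applying $\overline{\partial}$ next, $\overline{\partial}\Theta_{z_{\ast}}$ is the Dirac current carried by the pole $z_{\ast}$, which lies in $\operatorname*{Reg}\mathcal{Y}_{0}^{t}$ where $\Theta_{z_{\ast}}$ is the ordinary Cauchy kernel; since $i\partial\overline{\partial}=-i\,\overline{\partial}\partial$ the constants combine to yield precisely $i\partial\overline{\partial}g_{t,z_{\ast}}=\delta_{z_{\ast}}\,dV$.

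For the analytic dependence on $t$ when $G^{\ast,t}\cap(\operatorname*{Sing}\mathcal{Y}_{0}^{t}\cup b\mathcal{Y}_{0}^{t})=\varnothing$, I would localise: choose a cut-off $\chi$ equal to $1$ near $M$, supported in $G^{\ast}$, and avoiding the interior pole $z_{\ast}$. Then $\chi\overline{\Theta_{z_{\ast}}}$ is a smooth $(0,1)$-form supported in $G^{\ast}$, so Lemma~\ref{L/ resdbar} gives that $R_{0,1}^{t}[\chi\overline{\Theta_{z_{\ast}}}]$ is, on $G^{\ast}\cap\mathcal{Y}_{0}^{t}$, the restriction of a smooth function depending analytically on $t$; the complementary term $R_{0,1}^{t}[(1-\chi)\overline{\Theta_{z_{\ast}}}]$, evaluated at $z\in G^{\ast}$, has kernel $k_{t}(z',z)$ with $z'$ bounded away from $z$, hence is smoothing and analytic in $t$ by differentiation under the integral. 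Since conjugation preserves real-analyticity in the real variable $t$, the identity $g_{t,z_{\ast}}=-\overline{R_{0,1}^{t}\,\overline{\Theta_{z_{\ast}}}}$ shows $g_{t,z_{\ast}}\lvert_{G^{\ast}\cap\mathcal{Y}_{0}^{t}}$ depends analytically on $t$.

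The main obstacle will be making the two ``Dirac-collapse'' computations rigorous as identities of currents \emph{across} $\operatorname*{Sing}\mathcal{Y}_{0}^{t}$: one must guarantee that $\partial_{z}g_{t,z_{\ast}}=-\Theta_{z_{\ast}}$ and that $\overline{\partial}\Theta_{z_{\ast}}$ reduces to the single Dirac mass at $z_{\ast}$, with no spurious residue supported by $\operatorname*{Sing}\mathcal{Y}_{0}^{t}$. This is precisely where the principal-value and residue machinery of Coleff--Herrera~\cite{CoN-HeM1978}, as exploited in~\cite{HeG-PoP1989} and in Proposition~\ref{P/ resdbar}, must replace the naive pointwise calculation valid only on $\operatorname*{Reg}\mathcal{Y}_{0}^{t}$; the fact that $z_{\ast}$ is a regular point is what localises the final Dirac in the smooth part of the curve and makes the normalisation unambiguous.
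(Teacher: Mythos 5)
Your algebraic identification $g_{t,z_{\ast}}=-\,\overline{R_{0,1}^{t}\overline{\Theta_{z_{\ast}}}}$ is exactly the formula the paper arrives at \emph{a posteriori} (it is the conjugate of the pairing~(\ref{F/ GreenChi}) unravelled), and your integrability count for the two disjoint simple poles on $\operatorname*{Reg}\mathcal{Y}_{0}^{t}$ is fine. But there is a genuine gap, which you have located yourself without closing: every property of $R_{0,1}^{t}$ you invoke --- the identity $\overline{\partial}R_{0,1}^{t}\psi=\psi$, the Cauchy--Pomp\'{e}iu reproduction, the two ``Dirac-collapse'' computations --- is established in Proposition~\ref{P/ resdbar} only for $\psi$ the restriction to $\mathcal{Y}^{t}$ of a form of class $C^{\infty}$ on $\Omega^{t}$. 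The form $\overline{\Theta_{z_{\ast}}}$ is not of this kind: besides the pole at $z_{\ast}$ it is singular along $\operatorname*{Sing}\mathcal{Y}_{0}^{t}$, where $\omega_{t}$ carries the denominators $\partial F_{t}/\partial z_{j}$, so none of the quoted identities applies to it, and ``conjugating $\overline{\partial}R_{0,1}^{t}\psi=\psi$ with $\psi=\overline{\Theta_{z_{\ast}}}$'' is circular. Differentiating a principal-value current of this type can in general produce residue currents supported on $\operatorname*{Sing}\mathcal{Y}_{0}^{t}$ --- precisely the spurious terms you mention --- and citing \cite{CoN-HeM1978} wholesale does not show they vanish; ruling them out is the substance of the proposition, not a routine reference. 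The same difficulty undoes your analyticity argument: the complementary term $R_{0,1}^{t}\left[\left(1-\chi\right)\overline{\Theta_{z_{\ast}}}\right]$ is a principal value along the $t$-dependent set $\operatorname*{Sing}\mathcal{Y}_{0}^{t}$, so its analyticity in $t$ is of the same order of difficulty as Proposition~\ref{P/ resdbar} itself and cannot be obtained by differentiation under the integral sign; Lemma~\ref{L/ resdbar} does not apply to it either, since its datum is not smooth.

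The paper's proof never applies $R_{0,1}^{t}$ to singular data: it dualizes. It defines $\overline{g_{t,z_{\ast}}}=\omega_{t}\lrcorner\left(R_{0,1}^{t}\right)^{\ast}\left(\overline{\Theta_{t}}\right)$ with $\Theta_{t}=k_{t,z_{\ast}}\omega_{t}\wedge\overline{\omega_{t}}$; the adjoint makes sense on such currents because Lemma~\ref{L/ resdbar} shows that $R_{0,1}^{t}$ maps $\mathcal{D}_{0,1}\left(G^{\ast}\cap\mathcal{Y}_{0}^{t}\right)$ into $\mathcal{E}_{0,0}\left(\mathcal{Y}_{0}^{t}\right)$, i.e. into genuinely smooth restrictions. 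The computation $\left(R_{0,1}^{t}\right)^{\ast}\Delta_{z_{\ast}}=k_{t,z_{\ast}}\omega_{t}$ and the equation $\overline{\partial}\left(R_{0,1}^{t}\right)^{\ast}\Delta_{z_{\ast}}=\Delta_{z_{\ast}}$ use only the reproducing formula $R_{0,1}^{t}\overline{\partial}\chi=\chi$ on smooth compactly supported $\chi$; the relation $\partial g_{t,z_{\ast}}=k_{t,z_{\ast}}\omega_{t}$, hence $i\partial\overline{\partial}g_{t,z_{\ast}}=\Delta_{z_{\ast}}$, then follows from the factorization of every $\left(0,1\right)$-form on $G^{\ast}\cap\mathcal{Y}_{0}^{t}$ through $\overline{\omega_{t}}$, and ellipticity of $\overline{\partial}$ gives regularity off $z_{\ast}$. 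Finally both the explicit formula~(\ref{F/ Green}) and the analytic dependence on $t$ drop out of the pairing~(\ref{F/ GreenChi}), where all the singular integration is confined to the already-controlled quantity $R_{0,1}^{t}\left(\omega_{t}\lrcorner\overline{\chi_{t}}\right)$. To salvage your direct route you would in effect have to redo the Coleff--Herrera analysis of Proposition~\ref{P/ resdbar} for the singular datum $\overline{\Theta_{z_{\ast}}}$; the adjoint device is exactly what makes that unnecessary.
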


\begin{proof}
D'apr\`{e}s la proposition pr\'{e}c\'{e}dente, on dispose d'un op\'{e}rateur
$R_{0,1}^{t}:\mathcal{E}_{0,1}\left(  \mathcal{Y}_{0}^{t}\right)
\longrightarrow\widetilde{\mathcal{E}}_{0,0}\left(  \mathcal{Y}_{0}%
^{t}\right)  $ qui v\'{e}rifie $\overline{\partial}R_{0,1}^{t}\varphi=\varphi$
pour tout $\varphi\in\mathcal{E}_{0,1}\left(  \mathcal{Y}_{0}^{t}\right)  $.
Puisque $\Delta_{z_{\ast}}$ est lisse (et m\^{e}me nul) au voisinage de
$\operatorname*{Sing}\mathcal{Y}_{0}^{t}$, $\Delta_{z_{\ast}}$ est un courant
qui agit sur $\widetilde{\mathcal{E}}_{0,0}\left(  \mathcal{Y}_{0}^{t}\right)
$ et $\left(  R_{0,1}^{t}\right)  ^{\ast}:\widetilde{\mathcal{E}}_{0,0}\left(
\mathcal{Y}_{0}^{t}\right)  ^{\prime}\longrightarrow\mathcal{E}_{0,1}\left(
\mathcal{Y}_{0}^{t}\right)  ^{\prime}$ agit sur $\Delta_{z_{\ast}}$. $\left(
R_{0,1}^{t}\right)  ^{\ast}\Delta_{z_{\ast}}$ v\'{e}rifie $\overline{\partial
}\left(  R_{0,1}^{t}\right)  ^{\ast}\Delta_{z_{\ast}}=\Delta_{z_{\ast}}$ au
sens des courants car si $\chi\in\mathcal{D}_{0,0}\left(  \mathcal{Y}_{0}%
^{t}\right)  $, $R_{0,1}^{t}\overline{\partial}\chi=\chi$ et donc%
\[
\left\langle \overline{\partial}\left(  R_{0,1}^{t}\right)  ^{\ast}%
\Delta_{z_{\ast}},\chi\right\rangle =\left\langle \Delta_{z_{\ast}}%
,R_{0,1}^{t}\overline{\partial}\chi\right\rangle =\left(  R_{0,1}^{t}%
\overline{\partial}\chi\right)  \left(  z_{\ast}\right)  =\chi\left(  z_{\ast
}\right)  .
\]
Si $\varphi\in\mathcal{E}_{0,1}\left(  \mathcal{Y}_{0}^{t}\right)  $%
\begin{align*}
\left\langle \left(  R_{0,1}^{t}\right)  ^{\ast}\Delta_{z_{\ast}}%
,\varphi\right\rangle  &  =\left\langle \Delta_{z_{\ast}},R_{0,1}^{t}%
\varphi\right\rangle =\left(  R_{0,1}^{t}\varphi\right)  \left(  z_{\ast
}\right) \\
&  =\int_{z^{\prime}\in\mathcal{Y}_{0}^{t}}k_{t}\left(  z^{\prime},z_{\ast
}\right)  \omega_{t}\left(  z^{\prime}\right)  \wedge\varphi\left(  z^{\prime
}\right)  =\left\langle k_{t}\left(  .,z_{\ast}\right)  \omega_{t}%
,\varphi\right\rangle
\end{align*}
D'o\`{u} $\left(  R_{0,1}^{t}\right)  ^{\ast}\Delta_{z_{\ast}}=k_{t,z_{\ast}%
}\omega_{t}$ o\`{u} $k_{t,z_{\ast}}=k_{t}\left(  .,z_{\ast}\right)  $.

Soit $G^{\ast}$ comme dans l'\'{e}nonc\'{e}. Compte tenu du
lemme~\ref{L/ resdbar}, $R_{0,1}^{t}$ applique $\mathcal{D}_{0,1}\left(
G^{\ast}\cap\mathcal{Y}_{0}^{t}\right)  $ dans $\mathcal{E}_{0,0}\left(
\mathcal{Y}_{0}^{t}\right)  $ de sorte que pour $\Theta_{t}=k_{t,z_{\ast}%
}\omega_{t}\wedge\overline{\omega_{t}}$ qui est une $\left(  1,1\right)
$-forme dont le support singulier est $\left\{  z_{\ast}\right\}
\cup\operatorname*{Sing}\mathcal{Y}_{0}^{t}$, on peut d\'{e}finir sur
$\mathcal{Y}_{0}^{t}$ un \'{e}l\'{e}ment $g_{t,z_{\ast}}$ de $\mathcal{D}%
_{0,0}\left(  G^{\ast}\cap\mathcal{Y}_{0}^{t}\right)  ^{\prime}$ par la
formule%
\[
\overline{g_{t,z_{\ast}}}=\omega_{t}\lrcorner\left(  R_{0,1}^{t}\right)
^{\ast}\left(  \overline{\Theta_{t}}\right)
\]
Au sens des courants sur $G^{\ast}\cap\mathcal{Y}_{0}^{t}$, on a donc
$\overline{\partial}\overline{g_{t,z_{\ast}}}\wedge\omega_{t}=\overline
{\partial}\left(  \overline{g_{t,z_{\ast}}}\omega_{t}\right)  =\overline
{\partial}\left(  R_{0,1}^{t}\right)  ^{\ast}\left(  \overline{\Theta_{t}%
}\right)  =\overline{\Theta_{t}}=\overline{k_{t,z_{\ast}}\omega_{t}}%
\wedge\omega_{t}$. Comme toute $\left(  0,1\right)  $-forme se factorise sur
$G^{\ast}\cap\mathcal{Y}_{0}^{t}$ par $\overline{\omega_{t}}$, on en
d\'{e}duit que $\partial g_{t,z_{\ast}}=k_{t,z_{\ast}}\omega_{t}$ et donc que
$i\partial\overline{\partial}g_{t,z_{\ast}}=\Delta_{z_{\ast}}\left\vert
_{G^{\ast}\cap\mathcal{Y}_{0}^{t}}\right.  $ . Puisque $\overline{\partial}$
est elliptique sur $\operatorname*{Reg}\mathcal{Y}^{t}$, $g_{t,z_{\ast}}$ est
en fait une fonction r\'{e}elle analytique sur $\left(  G^{\ast}%
\cap\mathcal{Y}_{0}^{t}\right)  \backslash\left\{  z_{\ast}\right\}  $. Si
$\chi_{t}$ est la restriction \`{a} $\mathcal{Y}_{0}^{t}$ d'une $\left(
1,1\right)  $-forme lisse de $\Omega^{t}$ dont le support est contenu dans
$G^{\ast}$, il vient
\begin{gather*}
\overline{\left\langle g_{t,z_{\ast}},\chi_{t}\right\rangle }=\left\langle
\overline{g_{t,z_{\ast}}},\omega_{t}\wedge\left(  \omega_{t}\lrcorner
\overline{\chi_{t}}\right)  \right\rangle =\left\langle \left(  R_{0,1}%
^{t}\right)  ^{\ast}\left(  \overline{\Theta_{t}}\right)  ,\omega_{t}%
\lrcorner\overline{\chi_{t}}\right\rangle =\left\langle \overline{\Theta_{t}%
},R_{0,1}^{t}\left(  \omega_{t}\lrcorner\overline{\chi_{t}}\right)
\right\rangle \\
\left\langle g_{t,z_{\ast}},\chi_{t}\right\rangle =\left\langle k_{t,z_{\ast}%
}\omega_{t},\overline{R_{0,1}^{t}\left(  \omega_{t}\lrcorner\overline{\chi
_{t}}\right)  \omega_{t}}\right\rangle
\end{gather*}
Cette derni\`{e}re formule prouve aussi que si $\mathbb{\chi}_{t}$ d\'{e}pend
analytiquement de $t$, il en est de m\^{e}me de $\left\langle g_{t,z_{\ast}%
},\chi_{t}\right\rangle $. Par ailleurs, comme%
\[
\left(  R_{0,1}^{t}\left(  \omega_{t}\lrcorner\overline{\chi_{t}}\right)
\right)  \left(  z^{\prime}\right)  =\frac{1}{\left(  2\pi i\right)  ^{2}}%
\int_{\mathcal{Y}_{0}^{t}}k_{t}\left(  .,z^{\prime}\right)  \omega_{t}%
\wedge\left(  \omega_{t}\lrcorner\overline{\chi_{t}}\right)  =\frac{1}{\left(
2\pi i\right)  ^{2}}\int_{\mathcal{Y}_{0}^{t}}k_{t}\left(  .,z^{\prime
}\right)  \overline{\chi_{t}}%
\]
on obtient%
\begin{equation}
\overline{\left\langle g_{t,z_{\ast}},\chi_{t}\right\rangle }=\frac{1}{\left(
2\pi i\right)  ^{2}}\int_{\mathcal{Y}_{0}^{t}}\left\langle \overline
{k_{t,z_{\ast}}\left(  z^{\prime}\right)  }\overline{\omega_{t}\left(
z^{\prime}\right)  },k_{t}\left(  .,z^{\prime}\right)  \omega_{t}\left(
z^{\prime}\right)  \right\rangle \overline{\chi_{t}} \label{F/ GreenChi}%
\end{equation}
ce qui implique que lorsque $z\in G^{\ast}\cap\mathcal{Y}_{0}^{t}$,%
\[
\overline{g_{t,z_{\ast}}\left(  z\right)  }=\frac{1}{\left(  2\pi i\right)
^{2}}\left\langle \overline{k_{t,z_{\ast}}}\overline{\omega_{t}},k_{t,z}%
\omega_{t}\right\rangle =\frac{1}{4\pi^{2}}\int_{z^{\prime}\in\mathcal{Y}%
_{0}^{t}}\overline{k_{t}\left(  z_{\ast},z^{\prime}\right)  }k_{t}\left(
z^{\prime},z\right)  \overline{\omega_{t}}\left(  z^{\prime}\right)
\wedge\omega_{t}\left(  z^{\prime}\right)  .
\]
On constate en particulier que $g_{t,z_{\ast}}$ ne d\'{e}pend pas de $G^{\ast
}$ et qu'on obtient ainsi une fonction d\'{e}finie et harmonique sur $\left(
\operatorname*{Reg}\mathcal{Y}_{0}^{t}\right)  \backslash\left\{  z_{\ast
}\right\}  $ v\'{e}rifiant $i\partial\overline{\partial}g_{t,z_{\ast}}%
=\Delta_{z_{\ast}}\left\vert _{\operatorname*{Reg}\mathcal{Y}_{0}^{t}}\right.
$. Ce qui ach\`{e}ve la preuve de la proposition.\medskip
\end{proof}

Revenons \`{a} la fonction $u$ consid\'{e}r\'{e}e au d\'{e}but de la preuve du
th\'{e}or\`{e}me~\ref{T/ harmo}. On se donne $N^{o}$, $N^{+}$ et $N^{-}$ comme
d\'{e}finis au d\'{e}but de la preuve du th\'{e}or\`{e}me\textit{~}%
\ref{T/bord CxR A}\textit{.} Pour $t\in V$ et $z\in\left(  \mathcal{X}\cup
N^{o}\backslash M\right)  ^{t}$, on pose
\[
U\left(  t,z\right)  =\frac{2}{i}\int_{\zeta\in M_{t}}u\left(  t,\zeta\right)
\partial_{\zeta}g_{t}\left(  z,\zeta\right)  +g_{t}\left(  z,\zeta\right)
\overline{\partial u}\left(  t,\zeta\right)
\]
o\`{u} $\partial u$ est d\'{e}finie comme plus haut et les int\'{e}grales
\'{e}tant \`{a} prendre au sens des courants. Par construction, $U$ est une
fonction r\'{e}elle analytique univalu\'{e}e et $U\left(  t,.\right)  $ est
harmonique sur $N^{o}\cup\operatorname*{Reg}\mathcal{X}^{t}$. On convient de
poser $U^{+}\left(  t,z\right)  =U\left(  t,z\right)  $ si $z\in
\mathcal{X}^{t}$ et $U^{-}\left(  t,z\right)  =U\left(  t,z\right)  $ lorsque
$z\in\left(  N^{-}\right)  ^{t}$. Bien que $\mathcal{X}_{t}$ puisse \^{e}tre
singuli\`{e}re, \cite[lemme 15]{HeG-MiV2007} s'applique et donne que les
fonctions $U^{+}\left(  t,.\right)  $ et $U^{-}\left(  t,.\right)  $ se
prolongent contin\^{u}ment \`{a} $M^{t}$ et que si $z\in M^{t}$,
\[
u\left(  t,z\right)  =U^{+}\left(  t,z\right)  -U^{-}\left(  t,z\right)  .
\]

On note $T_{1}$ l'ensemble des r\'{e}els $\tau$ tels que pour tout
param\`{e}tre $t=\left(  t_{1},...,t_{n}\right)  \in V$ tels que $t_{1}<\tau$,
$U^{-}\left(  t,z\right)  =0$ lorsque $z\in\left(  N^{-}\right)  ^{t}$.
$T_{1}\neq\varnothing$ car $M_{t}=\varnothing$ si $t\in V\backslash
\pi_{\mathbb{R}}\left(  M\right)  $. $T_{1}$ est donc un intervalle non vide
dont on note $\tau_{\ast}$ la borne sup\'{e}rieure. Supposons $\tau_{\ast
}<\tau_{\infty}=\sup\left\{  t_{1};~t\in\pi_{\mathbb{R}}\left(  V\right)
\right\}  $. Il existe alors n\'{e}cessairement un param\`{e}tre dont la
premi\`{e}re coordonn\'{e}es est $\tau_{\ast}$ et dont la section
correspondante de $M$ est non vide. Soit $t_{\ast}$ l'un de ces
param\`{e}tres. On sait que $U^{-}=0$ dans $N^{-}$ au dessus de $\left\{
t_{1}<\tau_{\ast}\right\}  $. Soit $\left(  t_{\ast},z_{\ast}\right)  $ un
point de $\overline{N_{t_{\ast}}^{-}}$. Puisque $N$ est coup\'{e}
transversalement par $\mathbb{C}_{t_{\ast}}^{2}$, un voisinage de $\left(
t_{\ast},z_{\ast}\right)  $ dans $N$ rencontre forc\'{e}ment $\left\{  \left(
t,z\right)  \in N^{-};~t_{1}<\tau_{\ast}\right\}  $ sur un ouvert non vide de
$N$. On en d\'{e}duit que $U^{-}$ est aussi nulle au voisinage de $\left(
t_{\ast},z_{\ast}\right)  $ dans $N^{-}$. Recouvrant $\overline{N_{t_{\ast}%
}^{-}}$ par un nombre fini de tels voisinages, on en d\'{e}duit que $U^{-}=0$
sur $N_{t}^{-}$ pour tout $t$ suffisamment voisin de $t_{\ast}$. Du coup,
$\tau_{\ast}<\sup T_{1}$, ce qui est une contradiction. Ainsi, $\tau_{\ast
}\geqslant\tau_{\infty}$ et $u_{t}$ se prolonge harmoniquement \`{a}
$\mathcal{X}_{t}$ pour toute valeur de $t$ par la fonction $U^{+}\left(
t,.\right)  $

\subsection{\textit{Preuve du th\'{e}or\`{e}me~\ref{T/bord CxR B}}}

Cette preuve est essentiellement la m\^{e}me que celle du
th\'{e}or\`{e}me~\ref{T/bord CxR A} et nous nous bornons \`{a} indiquer les
changements entre les deux d\'{e}monstrations. On se donne $T$, $M$ et $N$
comme dans l'\'{e}nonc\'{e} de th\'{e}or\`{e}me~\ref{T/bord CxR B}. Par
hypoth\`{e}se, $M$ contient une partie ferm\'{e}e $S$ telle que $\mathcal{H}%
^{n+1}\left(  S\right)  =0$ et $M\backslash S$ est une vari\'{e}t\'{e}
orient\'{e}e dont le courant d'int\'{e}gration est $T$.

Puisque $M$ est r\'{e}el analytique, on sait d'apr\`{e}s Hardt\cite[th.
4.3]{HaR1972}, que $\tau\mapsto T_{\tau}$ est d\'{e}finie et continu de
$Y=\left\{  \tau\in\mathbb{R}^{n}~;~\dim_{\mathbb{R}}\left(  M\cap
\mathbb{C}_{\tau}^{2}\right)  \leqslant1\right\}  $ dans l'espace des
cha\^{\i}nes localement int\'{e}grales de $N$. Pour tout param\`{e}tre $\tau$,
$\dim_{\mathbb{R}}\left(  M\cap\mathbb{C}_{\tau}^{2}\right)  \in\left\{
0,1,2\right\}  $ puisque $M\cap\mathbb{C}_{\tau}^{2}\subset N_{\tau}$.
Supposons que $M\cap\mathbb{C}_{t_{\ast}}^{2}$ est de dimension $2$ sur l'une
de ses composantes connexes $%
%TCIMACRO{\QTR{EuScript}{C}}%
%BeginExpansion
C%
%EndExpansion
$. Cette intersection est alors non transverse en chacun de ses points car
sinon $M$ serait au voisinage de l'un de ses point de dimension $n+2$. Soit
$p_{\ast}$ un point de $%
%TCIMACRO{\QTR{EuScript}{C}}%
%BeginExpansion
C%
%EndExpansion
$. Par hypoth\`{e}se, $M$ est incluse au voisinage de $p_{\ast}$ dans l'une
branche lisse de $N$~; on suppose pour ne pas introduire de notation
suppl\'{e}mentaire que $N$ est une sous-vari\'{e}t\'{e} de $\mathbb{E}^{n,2}$.
Modulo un changement de coordonn\'{e}es, on se ram\`{e}ne au cas o\`{u}
$p_{\ast}$ est l'origine~; $N$ est alors donn\'{e}e au voisinage de $p_{\ast
}=0$ par une \'{e}quation de la forme $z_{2}=f\left(  z_{1},t\right)  $ et $M$
est caract\'{e}ris\'{e}e dans ce voisinage par une \'{e}quation additionnelle
$\rho\left(  z_{1},t\right)  =0$, $f$ et $\rho$ \'{e}tant des fonctions de
classe $C^{1}$. Puisque $M$ et $\mathbb{C}_{0}^{2}$ se coupent non
transversalement au voisinage de $0$, il existe des fonctions continues
$\rho_{1},...,\rho_{n}$ telles que $\rho=\Sigma t_{j}\rho_{j}$. On en
d\'{e}duit que $%
%TCIMACRO{\QTR{EuScript}{C}}%
%BeginExpansion
C%
%EndExpansion
$ est ouverte dans $N_{t}$ et donc, puisque $\mathcal{C}_{t}$ est aussi
ferm\'{e}e dans $N_{t}$, que $%
%TCIMACRO{\QTR{EuScript}{C}}%
%BeginExpansion
C%
%EndExpansion
$ est une composante connexe de $N_{t}$. Puisque $N_{t}$ est une
sous-vari\'{e}t\'{e} de $\mathbb{C}^{2}$, on en d\'{e}duit que $b%
%TCIMACRO{\QTR{EuScript}{C}}%
%BeginExpansion
C%
%EndExpansion
\subset bN_{t}\subset bN$, ce qui est absurde. Ainsi, $\tau\mapsto T_{\tau}$
est d\'{e}finie et continu sur $\mathbb{R}^{n}$. En particulier,%
\[
I_{T,h}:t\mapsto\left\langle T_{t},h\right\rangle ,
\]
$h$ forme diff\'{e}rentielle fix\'{e}e du type $h_{1}dz_{1}+h_{2}dz_{2}$ avec
$h_{1},h_{2}\in\mathcal{O}\left(  \mathbb{C}^{2}\times\mathbb{C}^{n}\right)
$, est continue sur $\mathbb{R}^{n}$.

Comme dans la preuve du th\'{e}or\`{e}me~\ref{T/bord CxR A}, il s'agit dans un
premier temps de prouver l'analyticit\'{e} de $I_{T,h}$. Celle-ci \'{e}tant
triviale sur $\mathbb{R}^{n}\backslash\pi_{\mathbb{R}}\left(  M\right)  $, on
raisonne au voisinage d'un param\`{e}tre $t_{\ast}$ fix\'{e} de $M$. Notons
$\Delta_{d}$ le simplexe de dimension $d$ naturellement orient\'{e}, appelons
\textbf{cellule} analytique de dimension de $d$ un courant de la forme
$\varphi_{\ast}\left[  \Delta_{d}\right]  $ o\`{u} $\varphi$ est un
diff\'{e}omorphisme r\'{e}el analytique au voisinage de $\Delta_{d}$ et \`{a}
valeurs dans $N$. Enfin, appelons \textbf{cha\^{\i}ne cellulaire} analytique
de dimension $d$ de $N$ une somme finie de cellules de dimension $d$ de $N$.

Fixons $\varepsilon$ dans $\mathbb{R}_{+}^{\ast}$. Il r\'{e}sulte de \cite[th.
4.2.9]{FeH1969Li} et du fait que $N$ est localement une r\'{e}union de
sous-vari\'{e}t\'{e}s de $\mathbb{E}^{n,2}$ qu'on peut trouver dans $N$ une
cha\^{\i}ne cellulaire analytique $%
%TCIMACRO{\QTR{EuScript}{P}}%
%BeginExpansion
P%
%EndExpansion
_{\varepsilon}$ de dimension $n+1$ et un courant int\'{e}gral $\Theta
_{\varepsilon}$ de dimension $n+2$ support\'{e} par $N$ tels que
$\operatorname*{Spt}%
%TCIMACRO{\QTR{EuScript}{P}}%
%BeginExpansion
P%
%EndExpansion
_{\varepsilon}\cup\operatorname*{Spt}\Theta_{\varepsilon}\subset\left\{
dist\left(  ,M\right)  \leqslant\varepsilon\right\}  $ et $T=%
%TCIMACRO{\QTR{EuScript}{P}}%
%BeginExpansion
P%
%EndExpansion
_{\varepsilon}+d\Theta_{\varepsilon}$. Si le support d'une cellule
$\varphi_{\ast}\left[  \Delta_{d}\right]  $ intervenant dans $%
%TCIMACRO{\QTR{EuScript}{P}}%
%BeginExpansion
P%
%EndExpansion
_{\varepsilon}$ rencontre $\mathbb{C}_{t_{\ast}}^{2}$ sur $\varphi\left(
b\Delta_{d}\right)  $, on remplace cette cellule par un courant de la forme
$\widetilde{\varphi}_{\ast}\left[  \Delta_{d}\right]  -d\theta$ o\`{u}, $\tau$
\'{e}tant un r\'{e}el strictement positif assez petit, $\widetilde{\varphi
}=\varphi\circ\tau Id_{\Delta_{d}\ast}$ et $\theta$ est le courant
d'int\'{e}gration naturel sur $\left\{  \alpha x~;\left(  \alpha,x\right)
\in\left[  1,\tau\right]  \times\Delta_{d}\right\}  $. En effectuant cette
op\'{e}ration sur chaque cellule de $\mathcal{P}_{\varepsilon}$ telle que
$\varphi\left(  b\Delta_{d}\right)  \cap\mathbb{C}_{t_{\ast}}^{2}%
\neq\varnothing$, on obtient, si $\tau$ est assez petit et bien choisi, une
d\'{e}composition $T=%
%TCIMACRO{\QTR{EuScript}{P}}%
%BeginExpansion
P%
%EndExpansion
_{\varepsilon}^{t_{\ast}}+d\Theta_{\varepsilon}^{t_{\ast}}$ adapt\'{e}e \`{a}
$t_{\ast}$ au sens o\`{u} outre les propri\'{e}t\'{e}s de la d\'{e}composition
pr\'{e}c\'{e}dente, $\left(
%TCIMACRO{\QTR{EuScript}{P}}%
%BeginExpansion
P%
%EndExpansion
_{\varepsilon}^{t_{\ast}},\Theta_{\varepsilon}^{t_{\ast}}\right)  $ jouit
aussi de celle que $\mathbb{C}_{t_{\ast}}^{2}$ ne coupe les cellules de $%
%TCIMACRO{\QTR{EuScript}{P}}%
%BeginExpansion
P%
%EndExpansion
_{\varepsilon}^{t_{\ast}}$ que sur leurs faces et transversalement. Dans une
telle d\'{e}composition, $\left(
%TCIMACRO{\QTR{EuScript}{P}}%
%BeginExpansion
P%
%EndExpansion
_{\varepsilon}^{t_{\ast}}\right)  _{t}$ est bien d\'{e}fini pour $t$ voisin de
$t_{\ast}$ et il r\'{e}sulte \`{a} nouveau de \cite[th. 4.3]{HaR1972} que pour
$t$ voisin de $t_{\ast}$ les sections $\left(  \Theta_{\varepsilon}^{t_{\ast}%
}\right)  _{t}$ sont bien d\'{e}finies.

Soient $\rho\in C_{c}^{\infty}\left(  \mathbb{R}^{n},\mathbb{R}_{+}\right)  $
d'int\'{e}grale $1$ et $\left(  \rho_{\alpha}\right)  =\left(  \mathbb{R}%
^{n}\ni\theta\mapsto\alpha^{-n}\rho\left(  \theta/\alpha\right)  \right)
_{\alpha>0}$. On fixe $t$ suffisamment voisin de $t_{\ast}$ afin que les
consid\'{e}rations pr\'{e}c\'{e}dentes s'appliquent. On a donc
\[
\left\langle T,dh\wedge P_{t}\right\rangle =\,\underset{\alpha\downarrow
0}{\lim}\left\langle T,dh\wedge P_{t}^{\alpha}\right\rangle
\]
o\`{u} pour $\alpha>0$, $P_{t}^{\alpha}$ est la forme obtenue en convolant les
coefficients de $P_{t}$ avec $\rho_{\alpha}$. Si $\alpha>0$, $\left\langle
T,dh\wedge P_{t}^{\alpha}\right\rangle =\left\langle T,h\wedge dP_{t}^{\alpha
}\right\rangle $ (car $T$ est ferm\'{e}) et $dP_{t}^{\alpha}=\rho_{\alpha
}\left(  .-t\right)  d\theta$. D'apr\`{e}s la d\'{e}finition des sections de
$T$, on en d\'{e}duit en faisant tendre $\alpha$ vers $0^{+}$ que
\[
\left\langle T,dh\wedge P_{t}\right\rangle =\left\langle T_{t},h\right\rangle
=I_{T,h}\left(  t\right)
\]
De m\^{e}me, $\left\langle
%TCIMACRO{\QTR{EuScript}{P}}%
%BeginExpansion
P%
%EndExpansion
_{\varepsilon}^{t_{\ast}},dh\wedge P_{t}\right\rangle =I_{%
%TCIMACRO{\QTR{EuScript}{P}}%
%BeginExpansion
P%
%EndExpansion
_{\varepsilon}^{t_{\ast}},h}\left(  t\right)  $ et $\left\langle
d\Theta_{\varepsilon}^{t_{\ast}},dh\wedge P_{t}\right\rangle =\left\langle
\left(  \Theta_{\varepsilon}^{t_{\ast}}\right)  _{t},dh\right\rangle =0$, la
derni\`{e}re \'{e}galit\'{e} \'{e}tant due au fait que $dh$ est une $\left(
2,0\right)  $-forme diff\'{e}rentielle de $\mathbb{C}_{t}^{2}$ et que le
support de $\left(  \Theta_{\varepsilon}^{t_{\ast}}\right)  _{t}$ est contenu
dans $N_{t}$ qui est une courbe complexe. D'o\`{u}
\[
I_{T,h}\left(  t\right)  =\left\langle T,dh\wedge P_{t}\right\rangle
=\left\langle
%TCIMACRO{\QTR{EuScript}{P}}%
%BeginExpansion
P%
%EndExpansion
_{\varepsilon}^{t_{\ast}},dh\wedge P_{t}\right\rangle +\left\langle
d\Theta_{\varepsilon}^{t_{\ast}},dh\wedge P_{t}\right\rangle =I_{%
%TCIMACRO{\QTR{EuScript}{P}}%
%BeginExpansion
P%
%EndExpansion
_{\varepsilon}^{t_{\ast}},h}\left(  t\right)  .
\]
La d\'{e}composition $T=%
%TCIMACRO{\QTR{EuScript}{P}}%
%BeginExpansion
P%
%EndExpansion
_{\varepsilon}^{t_{\ast}}+d\Theta_{\varepsilon}^{t_{\ast}}$ \'{e}tant
adapt\'{e}e \`{a} $t_{\ast}$, $I_{%
%TCIMACRO{\QTR{EuScript}{P}}%
%BeginExpansion
P%
%EndExpansion
_{\varepsilon}^{t_{\ast}},h}$ et donc $I_{T,h}$ est r\'{e}elle analytique au
voisinage de $t_{\ast}$.\medskip

Finalement, $I_{T,h}$ est analytique sur $\mathbb{R}^{n}$ et donc nulle
puisque nulle \`{a} l'infini. Les sections $T_{t}$ de $T$ bordent donc des
1-chaines holomorphes $X_{t}$ de $\mathbb{C}_{t}^{2}\backslash
\operatorname*{Spt}T_{t}$. De m\^{e}me, les sections $\left(
%TCIMACRO{\QTR{EuScript}{P}}%
%BeginExpansion
P%
%EndExpansion
_{\varepsilon}^{t_{\ast}}\right)  _{t}$ et $%
%TCIMACRO{\QTR{EuScript}{P}}%
%BeginExpansion
P%
%EndExpansion
_{\varepsilon}^{t_{\ast}}$ sont aussi les bords de 1-chaines holomorphes
$Y_{\varepsilon,t}^{t_{\ast}} $ de $\mathbb{C}_{t}^{2}\backslash
\operatorname*{Spt}T_{t}$. Par construction, $X_{t}$ et $Y_{\varepsilon
,t}^{t_{\ast}}$ co\"{\i}ncident en dehors d'un voisinage (ferm\'{e}) d'ordre
$2\varepsilon$ de $\operatorname{Supp}T_{t}$ et si $t$ est suffisamment voisin
de $t_{\ast}$, $X_{t}=\,\underset{\varepsilon\downarrow0}{\lim}\left(
Y_{\varepsilon}^{t_{\ast}}\right)  _{t}$.

Comme dans la section pr\'{e}c\'{e}dente, on d\'{e}finit ce qui sera le
support de la cha\^{\i}ne CR cherch\'{e}e par ses sections. Si $t\in
\mathbb{R}^{n}$ on note $\mathcal{X}_{t}$ le support de la 1-cha\^{\i}ne
$X_{t}$ et on pose
\[
\mathcal{X}=\underset{t\in\mathbb{R}^{n}}{\cup}\mathcal{X}_{t}\text{~}.
\]
Hormis l'utilisation du lemme~\ref{L/ chgt var} qui permet de se ramener au
cas des sous-vari\'{e}t\'{e}s r\'{e}elles analytiques coup\'{e}es
transversalement par $\mathbb{C}_{t_{\ast}}^{2}$ et qui ici est remplac\'{e}e
par la construction de $%
%TCIMACRO{\QTR{EuScript}{P}}%
%BeginExpansion
P%
%EndExpansion
_{\varepsilon}^{t_{\ast}}$, la preuve du lemme~\ref{L/ reg de Xtilde}
s'applique et on obtient que $\mathcal{X}$ est un sous-ensemble analytique de
$\mathbb{E}^{n,2}\backslash M$ de dimension $n+2$, que ses sections sont des
courbes complexes et que la trace de son ensemble singulier $\mathcal{X}^{s}$
sur ces sections sont des ensembles discrets et $\mathcal{H}^{n}\left(
\mathcal{X}^{s}\right)  <+\infty$.

Prouvons maintenant que$\mathcal{X}$ est le support d'un courant
d'int\'{e}gration $X$ de $\mathbb{E}^{n,2}\backslash M$ v\'{e}rifiant $dX=T$
et $dX_{t}=T_{t}$ pour tout $t\in\mathbb{R}^{n}$. Soit $p_{\ast}=\left(
\zeta_{\ast},w_{\ast},t_{\ast}\right)  $ un point de la partie
r\'{e}guli\`{e}re $\mathcal{X}^{r}$ de $\mathcal{X}$. Ayant fix\'{e}
$\varepsilon$ dans $\left]  0,\frac{1}{2}dist\left(  p_{\ast},M\right)
\right[  $ et not\'{e} $\mathcal{X}^{\varepsilon}$ le support du courant $%
%TCIMACRO{\QTR{EuScript}{P}}%
%BeginExpansion
P%
%EndExpansion
_{\varepsilon}^{t_{\ast}}$ pr\'{e}c\'{e}demment d\'{e}fini, on utilise les
notations de la preuve du lemme~\ref{L/ reg de Xtilde}~pour $\mathcal{X}%
^{\varepsilon}$ : pour un voisinage $G$ de $p_{\ast}$ et $V=\pi\left(
G\right)  $,
\[
\mathcal{X}^{\varepsilon}\cap G=\left\{  \left(  t,\zeta,w\right)  \in
G~;~\left(  \zeta,t\right)  \in V~et~(PQ)\left(  \zeta,t,w\right)  =0\right\}
\]
o\`{u} $P$ et $Q$ sont premiers entre eux dans $CR^{\omega}\left(  V\right)
\left[  w\right]  $. Etant donn\'{e} que $\mathcal{X}^{\varepsilon}$ et
$\mathcal{X}$ co\"{\i}ncident sur $\left\{  dist\left(  .,M\right)
>2\varepsilon\right\}  $ et que $p_{\ast}$ est un point r\'{e}gulier de
$\mathcal{X}^{\varepsilon}$, $\mathcal{X}^{\varepsilon}\cap G$ est, quitte
\`{a} diminuer $G$, le graphe d'une application $F$ de $CR^{\omega}\left(
V\right)  $. Puisque $P$ et $Q$ sont premiers entre eux dans $CR^{\omega
}\left(  V\right)  \left[  w\right]  $ et puisque $p_{\ast}$ est un point
r\'{e}gulier de $\mathcal{X}^{\varepsilon}$, on en d\'{e}duit par un argument
classique de formule de r\'{e}sidu logarithmique que $R\left(  \zeta
,t,w\right)  =\frac{P\left(  \zeta,t,w\right)  }{Q\left(  \zeta,t,w\right)  }$
se factorise dans $G$ sous la forme $\left[  w-F\left(  \zeta,t\right)
\right]  ^{\mu\left(  p_{\ast}\right)  }S\left(  \zeta,t,w\right)  $ o\`{u}
$S\in CR^{\omega}\left(  V\right)  \left(  w\right)  $ n'a ni p\^{o}le ni
z\'{e}ro dans $G$ et $\mu\left(  p_{\ast}\right)  =\pm m\left(  p_{\ast
}\right)  $ selon que $w_{\ast}$ est un z\'{e}ro ou un p\^{o}le de $R\left(
\zeta_{\ast},t_{\ast},.\right)  $. Si $t$ est suffisamment voisin de $t_{\ast
}$ et si $U$ un voisinage suffisamment petit de $\left(  \zeta_{\ast},w_{\ast
}\right)  $, on a donc%
\begin{equation}
X_{t}\left\vert _{U}\right.  =\mu\left(  p_{\ast}\right)  \left[
\mathcal{X}_{t}\right]  \left\vert _{U}\right.  .
\label{F/ X avec mult et int}%
\end{equation}

L'application $\mathcal{X}^{r}\overset{\mu}{\rightarrow}\mathbb{Z}^{\ast}$
ainsi construite est localement constante. Notons $\left(  \mathcal{X}%
_{\alpha}\right)  _{\alpha\in A}$ la famille (forc\'{e}ment finie) des
composantes connexes de $\mathcal{X}$. Puisque $\mathcal{X}^{s}$ ne
d\'{e}connecte aucune composante de $\mathcal{X}^{r}$, $\mu$ prend une valeur
constante $\mu_{\alpha}$ sur chaque $\mathcal{X}_{\alpha}\cap\mathcal{X}^{r}$
et pour tout $t\in\mathbb{R}^{n}$on obtient%

\begin{equation}
X_{t}=\sum\limits_{\alpha\in A_{t}}\mu_{\alpha}\left[  \mathcal{X}_{\alpha
,t}\right]  . \label{F/ X avec mult et int bis}%
\end{equation}
o\`{u} $A_{t}$ est une partie de $A$. Orientons chaque $\mathcal{X}_{\alpha}$
par $i\left(  dz_{1}\wedge d\overline{z_{1}}+dz_{2}\wedge d\overline{z_{2}%
}\right)  \wedge dt$ de sorte que $\left[  \mathcal{X}_{\alpha}\right]
_{t}=\left[  \mathcal{X}_{\alpha,t}\right]  $~\ et posons%
\[
X=\sum\limits_{\alpha\in A}\mu_{\alpha}\left[  \mathcal{X}_{\alpha}\right]  .
\]
Puisque $\left[  \mathcal{X}_{a}\right]  $ est un courant localement
rectifiable de masse finie, son bord est localement un multiple de $\left[
M\backslash S\right]  $ (voir \cite{FeH1969Li}) et il en est donc de m\^{e}me
pour $dX$. On a donc $\left\langle dX,\Phi\right\rangle =0$ si $\Phi\in
C_{n+1}^{\infty}\left(  \mathbb{E}^{n,2}\right)  $ a un support qui ne
rencontre pas l'ensemble $M^{\prime}$ des points $p=\left(  t,z\right)  $
o\`{u} $M$ et $\mathbb{C}_{t}^{2}$ se coupent et sont transverses~; notons que
dans ce cas on a aussi $\left\langle T,\Phi\right\rangle =0$ puisque tout
$\left(  n+1\right)  $-champ de vecteurs qui oriente $M\backslash S$ est
$\mathcal{H}^{n+1}$-presque partout factorisable par $\underset{1\leqslant
i\leqslant n}{\wedge}\partial/\partial t_{j}$. Si par contre $\Phi$ est de la
forme $dt\wedge\varphi$, alors d'apr\`{e}s \cite{FeH1969Li}%
\begin{align*}
\left\langle dX,\Phi\right\rangle  &  =-\sum\limits_{\alpha\in A}\mu_{\alpha
}\left\langle \left[  \mathcal{X}_{\alpha}\right]  ,dt\wedge d_{z}%
\varphi\right\rangle =-\sum\limits_{\alpha\in A}\mu_{\alpha}\int
_{t\in\mathbb{R}^{n}}\left\langle \left[  \mathcal{X}_{\alpha}\right]
_{t},d_{z}\varphi\right\rangle d\mathcal{H}^{n}\left(  t\right) \\
&  =\int_{t\in\mathbb{R}^{n}}\left\langle d_{z}\sum\limits_{\alpha\in A}%
\mu_{\alpha}\left[  \mathcal{X}_{\alpha,t}\right]  ,\varphi\right\rangle
d\mathcal{H}^{n}\left(  t\right) \\
&  =\int_{t\in\mathbb{R}^{n}}\left\langle T_{t},\varphi\right\rangle
d\mathcal{H}^{n}\left(  t\right)  =\left\langle T,dt\wedge\varphi\right\rangle
=\left\langle T,\Phi\right\rangle .
\end{align*}
Comme $\mathcal{H}^{n+1}\left(  M\backslash M^{\prime}\right)  =0$, on en
d\'{e}duit que la formule $dX=T$ est valable au sens des courants. Le fait que
$dX_{t}=T_{t}$ pour tout $t\in\mathbb{R}^{n}$ r\'{e}sulte de
(\ref{F/ X avec mult et int bis}). Lorsque $M\backslash S$ est connexe, on
obtient que $X\in\mathbb{Z}\left[  \mathcal{X}\right]  $ de la m\^{e}me
fa\c{c}on que dans~\cite[th 3.3 \& p. 370]{HaR1977}.

Lorsque $p=\left(  t,z\right)  $ est en outre un point r\'{e}gulier de $M$
o\`{u} $M$ est transverse \`{a} $\mathbb{C}_{t}^{2}$, on sait d'apr\`{e}s
\cite[th. II]{HaR-LaB1975} que si $z$ poss\`{e}de dans $\mathbb{C}_{t}^{2}$ un
voisinage $U_{p}$ tel que chaque composante connexe de $\overline
{\mathcal{X}_{t}}\cap U_{p}$ est soit une vari\'{e}t\'{e} \`{a} bord
r\'{e}elle analytique de bord $M_{t}\cap U_{p}$, soit un sous-ensemble
r\'{e}el analytique de $U_{p}$~; en utilisant aussi \cite[th. 4.7]%
{HaR-LaB1975}, on obtient que si $%
%TCIMACRO{\QTR{EuScript}{C}}%
%BeginExpansion
C%
%EndExpansion
$ est une composante connexe de $\mathcal{X}_{t}\cap U_{p}$, $U_{p}\cap b%
%TCIMACRO{\QTR{EuScript}{C}}%
%BeginExpansion
C%
%EndExpansion
=M\cap U_{p}$ quitte \`{a} diminuer $U_{p}$. Par transversalit\'{e}, cette
conclusion s'\'{e}tend \`{a} $\left(  \mathcal{X},M,p\right)  $ dans $W_{p}$
si $W_{p}$ est suffisamment petit. Si $p$ n'est pas un point r\'{e}gulier de
$M$ o\`{u} $M$ est transverse \`{a} $\mathbb{C}_{t}^{2}$, ces arguments
s'applique malgr\'{e} tout \`{a} $%
%TCIMACRO{\QTR{EuScript}{P}}%
%BeginExpansion
P%
%EndExpansion
_{\varepsilon}^{t}$ pour des r\'{e}els $\varepsilon$ arbitrairement petit. Il
est clair que ceci entra\^{\i}ne que si $W$ est un voisinage assez petit de
$p$, $M$ contient au moins l'une des composantes connexes de $(W\cap
N)\backslash M$. Ceci ach\`{e}ve la preuve du
th\'{e}or\`{e}me~\ref{T/bord CxR B}.

\renewcommand\refname{R\'ef\'erences}
\renewcommand\baselinestretch{1}{\normalsize
\bibliographystyle{amsperso}
\bibliography{ref}

\providecommand{\bysame}{\leavevmode\hbox to3em{\hrulefill}\thinspace}
\providecommand{\MR}{\relax\ifhmode\unskip\space\fi MR }
% \MRhref is called by the amsart/book/proc definition of \MR.
\providecommand{\MRhref}[2]{%
  \href{http://www.ams.org/mathscinet-getitem?mr=#1}{#2}
}
\providecommand{\href}[2]{#2}
\begin{thebibliography}{10}

\bibitem{BoS1954}
S.~Bochner, \emph{Green's formula and analytic continuation}, Annals of
  Mathematics Studies \textbf{33} (1954), 1--14.

\bibitem{BoS-MaW1948}
S.~Bochner et W.T. Martin, \emph{{S}everal {C}omplex {V}ariables}, Princeton
  Mathematical Series, vol. 10, Princeton University Press, 1948.

\bibitem{BrA1936}
A.~B. Brown, \emph{On certain analytic continuations and analytic
  homeomorphisms}, Duke Math. J (1936), no.~2, 20--28.

\bibitem{CaH1957}
H.~Cartan, \emph{Vari\'et\'es analytiques r\'eelles et vari\'et\'es analytiques
  complexes}, Bull. Soc. Math. France \textbf{85} (1957), 77--99.

\bibitem{CoN-HeM1978}
N.R. Coleff et M.E. Herrera, \emph{Les courants r\'esiduels associ\'es \`a une
  forme m\'eromorphe}, Lecture Notes in Mathematics, vol. 633, Springer,
  Berlin, 1978.

\bibitem{DiT1998a}
T.-C. Dinh, \emph{Enveloppe polynomiale d'un compact de longueur finie et
  cha\^{\i}nes holomorphes \`a bord rectifiable}, Acta Math. \textbf{180}
  (1998), no.~1, 31--67.

\bibitem{DiT1998b}
\bysame, \emph{Orthogonal measures on the boundary of a {R}iemann surface and
  polynomial hull of compacts of finite length}, J. Funct. Anal. \textbf{157}
  (1998), no.~2, 624--649.

\bibitem{DoP1986}
P.~Dolbeault, \emph{Sur les cha\^{\i}nes maximalement complexes de bord
  donn\'e}, Geometric measure theory and the calculus of variations (Arcata,
  Calif., 1984), (Providence, RI), Proc. Sympos. Pure Math., Amer. Math. Soc.,
  1986, pp.~171--205.

\bibitem{DoP1994}
\bysame, \emph{On {CR}-moment condition}, Riv. Mat. Univ. Parma (5) \textbf{3}
  (1994), no.~1, 159--167 (1995), Differential geometry, complex analysis
  (Italian) (Parma, 1994).

\bibitem{DoP-ToG-ZaD2009}
P.~Dolbeaut, G.~Tomassini, et D.~Zaitsev, \emph{On {L}evi-flat hypersurfaces
  with prescribed boundary}, Pure Appl. Math. Q. \textbf{6} (2010), no.~3,
  Special Issue: In honor of Joseph J. Kohn. Part 1, 725--753.

\bibitem{FeH1969Li}
H.~Federer, \emph{Geometric measure theory}, Die {G}rundlehren der
  mathematischen {W}issenschaften, Band 153, Springer-Verlag New York Inc.,
  New-York, 1969.

\bibitem{FuR1939}
R.~Fueter, \emph{\"uber einen hartogsschen {S}atz}, Comment. Math. Helv.
  \textbf{12} (1939), 75--80.

\bibitem{HaR1972}
R.M. Hardt, \emph{Slicing and intersection theory for chains associated with
  real analytic varieties}, Acta Math. \textbf{129} (1972), 75--136.

\bibitem{HaR-LaB1975}
F.~R. Harvey et H.~B. Lawson, \emph{Boundaries of complex analytic varieties},
  Ann. of Math. \textbf{102} (1975), 233--290.

\bibitem{HaR1977}
R.~Harvey, \emph{Holomorphic chains and their boundaries}, Proc. Symp. Pure
  Math. \textbf{30} (1977), 309--382.

\bibitem{HeG1971}
G.M. Henkin, \emph{A uniform estimate for the solution of the {$\bar \partial
  $}-problem in a {W}eil region}, Uspehi Mat. Nauk \textbf{26} (1971),
  no.~3(159), 211--212.

\bibitem{HeG1995}
\bysame, \emph{The {A}bel-{R}adon transform and several complex variables},
  Ann. of Math. Stud., no. 137, pp.~223--275, Princeton Univ. Press, Princeton,
  NJ, 1995.

\bibitem{HeG-LeJ1984L}
G.M. Henkin et J.~Leiterer, \emph{Theory of functions on complex manifolds},
  Monographs in Mathematics, Birkh\"auser, 1984.

\bibitem{HeG-MiV2002}
G.M. Henkin et V.~Michel, \emph{Principe de {H}artogs dans les vari\'et\'es
  {CR}}, J. Math. Pures Appl. IX \textbf{81} (2002), no.~12, 1313--1395.

\bibitem{HeG-MiV2007}
\bysame, \emph{On the explicit reconstruction of a {R}iemann surface from its
  {D}irichlet-{N}eumann operator}, Geom. Funct. Anal. \textbf{17} (2007),
  no.~1, 116--155.

\bibitem{HeG-PoP1989}
G.M. Henkin et P.L. Polyakov, \emph{The {G}rothendieck-{D}olbeault lemma for
  complete intersections}, C. R. Acad. Sci. Paris S\'er. I Math. \textbf{308}
  (1989), no.~13, 405--409.

\bibitem{LaM1995}
M.G. Lawrence, \emph{Polynomial hulls of rectifiable curves}, Amer. J. Math.
  \textbf{117} (1995), 405--497.

\bibitem{MeJ-PoE2007}
J.~Merker et E.~Porten, \emph{A {M}orse-theoretical proof of the {H}artogs
  extension theorem}, J. Geom. Anal. \textbf{17} (2007), no.~3, 513--546.

\bibitem{PoH1907}
H.~Poincar{\'e}, \emph{Les fonctions analytiques de deux variables et la
  repr\'esentation conforme}, Rend. Circ. Mat. Palermo \textbf{23} (1907),
  158--220.

\bibitem{PoP1971}
P.L. Polyakov, \emph{The {C}auchy-{W}eil formula for differential forms}, Mat.
  Sb. (N.S.) \textbf{85 (127)} (1971), 388--402.

\bibitem{RoW1959}
W.~Rothstein, \emph{Bemerkungen zur {T}heorie komplexer {R}\"aume}, Math. Ann.
  \textbf{137} (1959), 304--315.

\bibitem{RoW-SpH1959}
W.~Rothstein et H.~Sperling, \emph{Einsetzen analytischer {F}l\"achenst\"ucke
  in {Z}yklen auf komplexen {R}\"aumen}, Festschr. {G}ed\"achtnisfeier {K}.
  {W}eierstrass, Westdeutscher Verlag, Cologne, 1966, pp.~531--554.

\bibitem{SeF1932}
F.~Severi, \emph{Una proprieta fondamentale dei campi di olomorfismo di una
  funzione analitica di una variabile reale e di una variabile complessa}, Atti
  Accad. Naz. Lincei, Rend. \textbf{VI} (1932), no.~15, 487--490.

\bibitem{WeJ1958c}
J.~Wermer, \emph{The hull of a curve in $\mathbb{C}^{n}$}, Ann. of Math.
  \textbf{68} (1958), no.~3, 550--561.

\end{thebibliography}
}%

%TCIMACRO{\TeXButton{Fin Interligne}{\end{spacing}}}%
%BeginExpansion
\end{spacing}%
%EndExpansion

\end{document}